\newtheoremstyle{mio}%
{}{} 
{\itshape}{} 
{\bfseries}{.}{ } 
{#1 #2\thmnote{~\mdseries(#3)}} 
\theoremstyle{mio}
\newtheorem{teor}{Theorem}[section]
\newtheorem{cor}[teor]{Corollary}
\newtheorem{prop}[teor]{Proposition}
\newtheorem{lemma}[teor]{Lemma}
\newtheorem{defin}[teor]{Definition}
\newtheoremstyle{definition2}%
{}{} 
{}{} 
{\bfseries}{.}{ } 
{#1 #2\thmnote{\mdseries~ #3}} 
\theoremstyle{definition2}
\newtheorem{ex}[teor]{Example}
\title[Integer-valued polynomials and the Picard group]{Localizations of integer-valued polynomials and of their Picard group}
\author{Dario Spirito}
\date{\today}
\address{Dipartimento di Scienze Matematiche, Informatiche e Fisiche, Universit\`a degli Studi di Udine, Udine, Italy}
\email{dario.spirito@uniud.it}
\subjclass[2020]{13C20; 13F05; 13F20}
\keywords{Integer-valued polynomials; Picard group; Jaffard family; Pr\"ufer domains; almost Dedekind domains}
\newcommand{\Max}{\mathrm{Max}}
\newcommand{\Over}{\mathrm{Over}}
\newcommand{\Spec}{\mathrm{Spec}}
\newcommand{\Zar}{\mathrm{Zar}}
\newcommand{\Int}{\mathrm{Int}}
\newcommand{\Inv}{\mathrm{Inv}}
\newcommand{\Pic}{\mathrm{Pic}}
\DeclareMathOperator{\coker}{coker}
\newcommand{\picpol}{\mathcal{P}}
\newcommand{\njaff}{\mathcal{N}}
\newcommand{\insN}{\mathbb{N}}
\newcommand{\insZ}{\mathbb{Z}}
\newcommand{\insQ}{\mathbb{Q}}
\newcommand{\inN}{\in\insN}
\begin{document}

\begin{abstract}
We prove a necessary and sufficient criterion for the ring of integer-valued polynomials to behave well under localization. Then, we study how the Picard group of $\Int(D)$ and the quotient group $\picpol(D):=\Pic(\Int(D))/\Pic(D)$ behave in relation to Jaffard, weak Jaffard and pre-Jaffard families; in particular, we show that $\picpol(D)\simeq\bigoplus\picpol(T)$ when $T$ ranges in a Jaffard family of $D$, and study when similar isomorphisms hold when $T$ ranges in a pre-Jaffard family. In particular, we show that the previous isomorphism holds when $D$ is an almost Dedekind domain such that the ring integer-valued polynomials behave well under localization and such that the maximal space of $D$ is scattered with respect to the inverse topology.
\end{abstract}

\maketitle

\section{Introduction}
Let $D$ be an integral domain with quotient field $K$. A polynomial $f(X)\in K[X]$ is \emph{integer-valued} over $D$ if $f(d)\in D$ for every $d\in D$; the set of all integer-valued polynomials is a ring, denoted by $\Int(D)$. The ring of integer-valued polynomials presents several properties that makes it a very interesting subject of study: for example, it is a simple example of a construction that does not involve limits, infinite families of indeterminates, or intersections of complicated families of rings, and that rather consistently produces rings that are non-Noetherian, even starting from a Noetherian ring. Furthermore, this construction can be tailored to several topics (for example, considering polynomials that are integer-valued only on a subset) in order to obtain examples of phenomenon that are difficult to obtain with other constructions. We refer the reader to the book \cite{intD} for background and results about integer-valued polynomials.

One particular problem of the theory of integer-valued polynomials is its relationship with localization: given a domain $D$ and a multiplicatively closed set, under what hypothesis the equality $S^{-1}\Int(D)=\Int(S^{-1}D)$ holds? Several special cases have been proved (see e.g. \cite[Section 1.2]{intD}, \cite[Proposition 2.1]{mori-Intvalued}, \cite{elliott-Int}); we give in Section \ref{sect:loc} a necessary and sufficient criterion for this to happen, involving the conductor $(D:f(D))$, and show how the known criterion descend from ours. We also deal not only with localizations but, more generally, with flat overrings of the base domain $D$.

We then concentrate on generalizing globalization properties for the Picard group $\Pic(\Int(D))$ of $\Int(D)$. Unless $\Int(D)$ is trivial (i.e., unless $\Int(D)=D[X]$), the Picard group of $\Int(D)$ is usually much larger than the Picard goup of $D$, and can be calculated only in very special circumstances (for example, for discrete valuation domains and for some kinds of one-dimensional Noetherian local domains \cite[Chapter 6]{intD}). To obtain a description of $\Pic(\Int(D))$ in more cases, the main tool is globalization: for example, when $D$ is a one-dimensional Noetherian domain, there is always an exact sequence
\begin{equation*}
0\longrightarrow\Pic(D)\longrightarrow\Pic(\Int(D))\longrightarrow\bigoplus_{M\in\Max(D)}\Pic(\Int(D_M))\longrightarrow 0,
\end{equation*}
which allows at least to understand the main features of $\Pic(\Int(D))$. In this context, our first result (given in two different forms in Theorems \ref{teor:ext-pic-jaffard} and \ref{teor:ext-jaff-picpol}) gives a generalization of the previous exact sequence, proving that a similar result holds if, instead of the family $\{D_M\mid M\in\Max(D)\}$, one takes a \emph{Jaffard family} of $D$, a particular kind of family of flat overrings with strong independence properties (see Section \ref{sect:prelim:jaff} for a precise definition). The result becomes more striking using the \emph{int-polynomial Picard group} $\picpol(D)$, defined as the quotient between $\Pic(\Int(D))$ and the image of the canonical inclusion of $\Pic(D)$: in this terminology, the theorem guarantees that $\picpol(D)$ and the direct sum $\bigoplus\{\picpol(T)\mid T\in\Theta\}$ are isomorphic for any Jaffard family $\Theta$.

In Sections \ref{sect:weakJaff} and \ref{sect:preJaff}, we further generalize this result by considering \emph{weak Jaffard families} and \emph{pre-Jaffard families}, that are obtained by relaxing the conditions defining a Jaffard family. In the former case, we obtain in Theorem \ref{teor:weakjaff-picpol} an exact sequence
\begin{equation*}
0\longrightarrow\bigoplus_{\substack{T\in\Theta\\ T\neq T_\infty}}\picpol(T)\longrightarrow\picpol(D)\longrightarrow\picpol(D,T_\infty)\longrightarrow 0
\end{equation*}
(see below for the definition of $T_\infty$ and $\picpol(D,T_\infty)$). For pre-Jaffard families, we use the result on weak Jaffard families to set up a transfinite inductive reasoning (which uses the \emph{derived sequence} of the pre-Jaffard family, see Section \ref{sect:prelim:jaff}) that allows to prove, under some additional hypothesis, the existence of an exact sequence
\begin{equation*}
0\longrightarrow\bigoplus_{T\in\Theta\setminus T_\alpha}\picpol(T)\longrightarrow\picpol(D)\longrightarrow\picpol(D,T_\alpha)\longrightarrow 0
\end{equation*}
(see below for the definition of $T_\alpha$). In particular, when also the pre-Jaffard family is \emph{sharp}, one obtain an isomorphism $\picpol(D)\simeq\bigoplus\{\picpol(T)\mid T\in\Theta\}$, just like in the case of Jaffard families. In particular, such an isomorphism holds when $D$ is an almost Dedekind domain such that $\Int(D)$ behaves well under localization and such that the maximal space of $D$ is scattered in the inverse topology (Corollary \ref{cor:almded-best}).

\section{Preliminaries}
Throughout the paper, $D$ is an integral domain with quotient field $K$.

An \emph{overring} of $D$ is a ring contained between $D$ and $K$; we denote by $\Over(D)$ the set of all overrings of $D$. A \emph{flat overring} is an overring that is flat as a $D$-module; in particular, every localization and every quotient ring of $D$ is a flat overring. If $T$ is a flat overring of $D$, then for every prime ideal $P$ of $T$ we have $T_P=D_{P\cap D}$; in particular, every flat overring is an intersection of localizations of $D$, and every (prime) ideal of $T$ is the extension of a (prime) ideal of $D$ \cite{akiba_gqr,richamn_generalized-qr}.

Let $I$ be a $D$-submodule of $K$ and $A\subseteq K$. The \emph{conductor} of $A$ in $I$ is $(I:A):=\{x\in K\mid xA\subseteq I\}$; moreover, $(I:A)=(I:AD)$, where we denote by $AD$ the $D$-submodule generated by $A$. The conductor is always a $D$-submodule of $K$, and can be $(0)$. If $T$ is a flat overring of $D$ and $J$ is a finitely generated $D$-module, then $(I:J)T=(IT:JT)$ \cite[Theorem 7.4]{matsumura}.

A \emph{fractional ideal} of $D$ is a $D$-submodule $I$ of $K$ such that $(D:I)\neq(0)$, i.e. such that $xI\subseteq D$ for some nonzero $x\in K$. A fractional ideal $I$ is \emph{invertible} if there is a fractional ideal $J$ such that $IJ=D$; equivalently, $I$ is invertible if it is finitely generated and locally principal (i.e., $ID_M$ is principal for every $M\in\Max(D)$). The set of invertible ideal is an abelian group, denoted by $\Inv(D)$, having as a subgroup the set $\mathrm{Princ}(D)$ of principal fractional ideals of $D$; the quotient $\Inv(D)/\mathrm{Princ}(D)$ is called the \emph{Picard group} of $D$, and is denoted by $\Pic(D)$.

\subsection{Topologies}
Let $D$ be an integral domain. The spectrum $\Spec(D)$ of $D$ can be endowed, in addition the usual Zariski topology, with another topology, called the \emph{inverse topology}. The inverse topology is defined as the topology having, as a subbasis of open sets, the closed sets of the Zariski topology. Under the inverse topology, the spectrum is still a compact $T_1$ space.

The set $\Over(D)$ of the overrings of $D$ can be endowed with a natural topology, called the \emph{Zariski topology}, whose subbasic open sets are the ones in the form
\begin{equation*}
B(x_1,\ldots,x_n):=\{T\in\Over(D)\mid x_1,\ldots,x_n\in T\},
\end{equation*}
as $x_1,\ldots,x_n$ varies in $K$. The Zariski topology on $\Over(D)$ is intimately connected with the Zariski topology on the spectrum $\Spec(D)$ of $D$: for example, the localization map $P\mapsto D_P$ is a topological embedding when $\Spec(D)$ and $\Over(D)$ are endowed with the respective Zariski topologies \cite[Lemma 2.4]{dobbs_fedder_fontana}. Moreover, the Zariski topology has several good properties: for example, it is a \emph{spectral space}, in the sense that there is a ring $R$ (not explicitly constructed) such that $\Spec(R)\simeq\Over(D)$ \cite[Proposition 3.5]{finocchiaro-ultrafiltri}.

The \emph{inverse topology} on $\Over(D)$ is the topology such that the $B(x_1,\ldots,x_n)$ are a subbasis of closed sets. This topology is closely connected with the properties of representations of $D$ as intersection of overrings (see e.g. \cite{olberding_topasp}). Properties of the inverse topology, in the context of spectral spaces, can be found in \cite{spectralspaces-libro}.

We shall use many times the following result \cite[Corollary 5]{compact-intersections}: if $\Theta\subseteq\Over(D)$ is compact, with respect to the Zariski topology, and if $I$ is a flat $D$-submodule of $K$, then
\begin{equation*}
I\left(\bigcap_{T\in\Theta}T\right)=\bigcap_{T\in\Theta}IT.
\end{equation*}

\subsection{Jaffard and pre-Jaffard families}\label{sect:prelim:jaff}
$D$ be an integral domain with quotient field $K$. We say that a subset $\Theta\subseteq\Over(D)$ is a \emph{pre-Jaffard family} of $D$ if the following conditions hold \cite{jaff-derived}:
\begin{itemize}
\item either $\Theta=\{K\}$ or $K\notin\Theta$;
\item every $T\in\Theta$ is flat over $D$;
\item $\Theta$ is \emph{complete}: if $I$ is an ideal of $D$, then $I=\bigcap\{IT\mid T\in\Theta\}$;
\item $\Theta$ is \emph{independent}: if $T\neq T'$ are in $\Theta$, then $TT'=K$;\footnote{This is not the best way to define independence for general families of overrings, but it is equivalent for flat overrings, and is the property we will be using; see \cite[Lemma 3.4 and Definition 3.5]{jaff-derived}.}
\item $\Theta$ is compact in the Zariski topology.
\end{itemize}
For example, if $D$ is a one-dimensional domain, the family $\Theta=\{D_M\mid M\in\Max(D)\}$ is a pre-Jaffard family of $D$.

In particular, if $\Theta$ is a pre-Jaffard family and $P$ is a nonzero prime ideal of $D$, then there is exactly one $T\in\Theta$ such that $PT\neq T$.

A family $\Theta$ of overrings of $D$ is \emph{locally finite} if every nonzero $x\in D$ is a nonunit in only finitely many elements of $\Theta$; if $\Theta=\{D_M\mid M\in\Max(D)\}$ is locally finite, we say that $D$ itself is locally finite. Any locally finite family of overrings is compact, with respect to the Zariski topology (see e.g. the proof of \cite[Corollary 8]{compact-intersections}).

A \emph{Jaffard family} is a pre-Jaffard family that is locally finite. Jaffard families enjoy several good factorization properties that make them a non-local generalization of $h$-local domains and thus of Dedekind domains; see for example \cite[Section 6.3]{fontana_factoring}, \cite[Section 4]{starloc} or \cite{length-funct}.

We say that an overring $T$ of $D$ is a \emph{Jaffard overring} if $T$ belongs to a Jaffard family of $D$. Given a Jaffard family $\Theta$ of $D$, we can construct a well-ordered decreasing chain $\{\njaff^\alpha(D)\}$ of subsets of $\Theta$ and a corresponding ascending chain $\{T_\alpha\}$ of overrings of $D$ in the following way. Given an ordinal number $\alpha$, we set: \cite[Section 6]{jaff-derived}
\begin{itemize}
\item if $\alpha=0$, $\njaff^0(\Theta):=\Theta$ and $T_0:=D$;
\item $\displaystyle{T_\alpha:=\bigcap_{T\in\njaff^\alpha(D)}T}$.
\item if $\alpha=\gamma+1$ is a limit ordinal, then $\njaff^\alpha(D)$ is the set of all elements of $\njaff^\gamma(D)$ that are \emph{not} Jaffard overrings of $T_\gamma$;
\item if $\alpha$ is a limit ordinal, then $\displaystyle{\njaff^\alpha(D):=\bigcap_{\beta<\alpha}\njaff^\beta(D)}$.
\end{itemize}
Note that, in \cite{jaff-derived}, the set $\njaff^\alpha(D)$ was denoted simply by $\Theta_\alpha$. Each $\njaff^\alpha(D)$ is a pre-Jaffard family of $T_\alpha$ (in particular, it is compact with respect to the Zariski topology) \cite[Proposition 6.1]{jaff-derived}, and it is a closed subset of $\Theta$, with respect to the inverse topology. We call $\{T_\alpha\}$ the \emph{derived sequence} of $\Theta$.

If $\njaff^1(\Theta)$ is a single element $T_\infty$, we say that $\Theta$ is a \emph{weak Jaffard family pointed at $T_\infty$}. Weak Jaffard families are usually the stepping stones in inductive arguments used to generalize properties of Jaffard families to pre-Jaffard families.

When $\njaff^\alpha(\Theta)=\emptyset$ for some $\alpha$ (equivalently, when $T_\alpha=K$) we say that $\Theta$ is \emph{sharp}.

\subsection{Homology}
We shall frequently use a basic results of homological algebra, the \emph{snake lemma}: if
\begin{equation*}
\begin{tikzcd}
0\arrow[r] & A_1\arrow[r]\arrow[d,"f"] & B_1\arrow[r]\arrow[d,"g"]  & C_1\arrow[r]\arrow[d,"h"] & 0\\
0\arrow[r] & A_2\arrow[r] & B_2\arrow[r]  & C_2\arrow[r] & 0\\
\end{tikzcd}
\end{equation*}
is a commutative diagram of abelian groups (or, more generally, of modules over a ring $R$) with exact rows, then the sequence
\begin{equation*}
0\longrightarrow\ker(f)\longrightarrow\ker(g)\longrightarrow\ker(h)\longrightarrow\coker(f)\longrightarrow\coker(g)\longrightarrow\coker(h)\longrightarrow 0
\end{equation*}
is exact. In particular, if $f,g,h$ are injective, then the sequence of cokernels
\begin{equation*}
0\longrightarrow\coker(f)\longrightarrow\coker(g)\longrightarrow\coker(h)\longrightarrow 0
\end{equation*}
is exact.

\section{When integer-valued polynomials localize}\label{sect:loc}

In this section, we find a necessary and sufficient criterion for $\Int(D)$ to localize at a flat overring, i.e., for when the equality $\Int(D)T=\Int(T)$ holds. Before doing so, we introduce a notion that generalizes Jaffard families and Jaffard overrings.

\begin{defin}
Let $D$ be an integral domain and $\Theta\subseteq\Over(D)$. We say that $\Theta$ is a \emph{$t$-Jaffard family} of $D$ if:
\begin{itemize}
\item either $\Theta=\{K\}$ or $K\notin\Theta$;
\item every $T\in\Theta$ is flat over $D$;
\item $\Theta$ is independent;
\item $\Theta$ is locally finite;
\item $\bigcap\{T\mid T\in\Theta\}=D$.
\end{itemize}
We say that an overring $T$ of $D$ is a \emph{$t$-Jaffard overring} if it belongs to a $t$-Jaffard family of $D$.
\end{defin}

Note that, in particular, every Jaffard family is a $t$-Jaffard family, and thus every Jaffard overring is a $t$-Jaffard overring. The converse does not hold: for example, if $D$ is a Krull domain, then the family of localizations at its prime ideals of height $1$ is a $t$-Jaffard family, but it is not a Jaffard family unless $D$ has dimension $1$.

The following proposition can be seen as a variant of \cite[Lemma 5.3]{starloc}.
\begin{prop}\label{prop:colon-tjaff}
Let $T$ be a $t$-Jaffard overring of $D$. Then, for every fractional ideal $I$ of $D$, we have $(D:I)T=(T:IT)$.
\end{prop}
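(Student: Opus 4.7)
My plan is to reduce the problem to a two-overring splitting $\{T,S\}$ and then use the flatness of $T$ to commute multiplication by $T$ with a finite intersection. Let $\Theta$ be a $t$-Jaffard family containing $T$; I may assume $I\neq(0)$ and $\Theta\neq\{T\}$, since otherwise the claim is trivial. Set $\Theta':=\Theta\setminus\{T\}$ and $S:=\bigcap_{T'\in\Theta'}T'$; then $T\cap S=\bigcap_{T'\in\Theta}T'=D$. Local finiteness of $\Theta$ descends to $\Theta'$, so $\Theta'$ is still compact in the Zariski topology, and the intersection formula for compact families of overrings (recalled in the preliminaries) applied to the flat $D$-submodule $T$ gives
\[
TS \;=\; T\bigcap_{T'\in\Theta'}T' \;=\; \bigcap_{T'\in\Theta'}TT' \;=\; K,
\]
where the last equality uses the independence of $\Theta$.

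From $D=T\cap S$ I obtain
\[
(D:I) \;=\; (T:I)\cap(S:I) \;=\; (T:IT)\cap(S:IS),
\]
the second equality because $(R:J)=(R:JR)$ for any overring $R\supseteq D$ and any $D$-submodule $J$ of $K$. To pass to $(D:I)T$, I would use flatness of $T$ together with the standard short exact sequence $0\to A\cap B\to A\oplus B\to A+B\to 0$ of $D$-submodules of $K$: tensoring with the flat module $T$ and identifying $T\otimes_D M$ with $TM$ inside $K$ (valid because $T$ is flat over $D$, so $T\otimes_D M\hookrightarrow T\otimes_D K=K$ with image $TM$) yields $T\cdot(A\cap B)=TA\cap TB$. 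Applied with $A=(T:IT)$ and $B=(S:IS)$ this gives $(D:I)T=T(T:IT)\cap T(S:IS)$.

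To conclude, $T(T:IT)=(T:IT)$ since the latter is already a $T$-module; and $T(S:IS)=K$ because $(S:IS)$ is a nonzero $S$-submodule of $K$ (it contains any $0\neq c\in D$ with $cI\subseteq D$), so fixing $0\neq z\in (S:IS)$ one has $(S:IS)T\supseteq(zS)T=z(ST)=zK=K$. Combining,
\[
(D:I)T \;=\; (T:IT)\cap K \;=\; (T:IT),
\]
as required.

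The only delicate point is the commutation of tensoring by $T$ with the intersection $(T:IT)\cap(S:IS)$, which is precisely where the flatness of $T$ is used essentially; the rest is routine bookkeeping of conductor identities together with direct consequences of the independence and local finiteness of $\Theta$. A secondary check is that $\Theta'$ is compact, so that the intersection formula for compact families may be invoked to yield $TS=K$; this is immediate once one notes that locally finite families are automatically compact in the Zariski topology.
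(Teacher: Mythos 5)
Your proof is correct and follows essentially the same route as the paper's: split off $S=\bigcap\{T'\in\Theta\setminus\{T\}\}$, use local finiteness, compactness and independence to get $TS=K$, write $(D:I)=(T:IT)\cap(S:IS)$, and use flatness of $T$ to distribute multiplication by $T$ over the finite intersection. The only difference is cosmetic: you spell out the flatness step (tensoring the intersection exact sequence) that the paper uses implicitly.
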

\begin{proof}
Let $\Theta$ be a $t$-Jaffard family of $D$ containing $T$, and let $A:=\bigcap\{S\mid S\in\Theta\setminus\{T\}\}$. Since $\Theta\setminus\{T\}$ is locally finite, it is compact, and thus, by \cite[Corollary 5]{compact-intersections}
\begin{equation*}
AT=\left(\bigcap_{S\in\Theta\setminus\{T\}}S\right)T=\bigcap_{S\in\Theta\setminus\{T\}}ST=K.
\end{equation*}
Hence,
\begin{equation*}
(D:I)T=(T\cap A:I)T=((T:I)\cap(A:I))T=(T:I)T\cap(A:I)T.
\end{equation*}
We have $(T:I)T=(T:IT)T=(T:IT)$; on the other hand, $(A:I)$ is an $A$-ideal, and thus $(A:I)T=(A:I)AT=K$. Hence, $(D:I)T=(T:IT)$, as claimed.
\end{proof}

We now go back to studying integer-valued polynomials. The following is a slight generalization of \cite[Theorem I.2.1]{intD}.
\begin{prop}
Let $T$ be a flat overring of $D$ and $f\in K[X]$. Then $f(D)T=f(T)T$.
\end{prop}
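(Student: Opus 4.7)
The inclusion $f(D)T \subseteq f(T)T$ is immediate, since $D \subseteq T$ gives $f(D) \subseteq f(T)$. My plan for the reverse inclusion is to show that $f(t) \in f(D)T$ for every $t \in T$, by first reducing to the case in which $T$ is a localization of $D$ at a prime.

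Since $f(D)T$ is a $T$-submodule of $K$, it equals $\bigcap_{Q \in \Max(T)} f(D)T_Q$, so it suffices to check that $f(t) \in f(D)T_Q$ for each maximal ideal $Q$ of $T$. Flatness of $T$ over $D$ gives $T_Q = D_P$ with $P = Q \cap D$, which reduces the problem to the following: for every prime $P$ of $D$ and every $t \in D_P$, one has $f(t) \in f(D)D_P$.

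To handle this local problem I would write $t = d/s$ with $d \in D$ and $s \in D \setminus P$ (so that $s$ is a unit in $D_P$) and seek a polynomial identity expressing $f(d/s)$ as a $D_P$-linear combination of values of $f$ on $D$. A natural attempt is Newton's forward-difference formula
\[
f(d + y) = \sum_{k=0}^{n} \binom{y}{k} \Delta^k f(d),
\]
applied with $y = t - d = d(1-s)/s \in D_P$: each finite difference $\Delta^k f(d) = \sum_j (-1)^{k-j}\binom{k}{j} f(d+j)$ lies in $f(D)D$, and the weights $\binom{y}{k}$ lie in $D_P$ whenever $k!$ is a unit there, e.g.\ when $\mathrm{char}(D/P) = 0$ or larger than $\deg f$. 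The principal obstacle will be handling the residual case of small residual characteristic, for which a more delicate polynomial identity is needed (of the type used in the proof of Cahen--Chabert \cite[Theorem~I.2.1]{intD}, which could alternatively be replaced by a Lagrange-interpolation argument when $|D/P| \geq \deg f + 1$). The flat-overring reduction is otherwise purely formal, and it is this reduction---replacing a multiplicative set with an arbitrary flat overring---that makes the proposition a ``slight generalization'' of the classical result.
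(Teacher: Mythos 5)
Your reduction is sound and is essentially the paper's own argument: since $f(D)T$ is a $T$-submodule of $K$, one has $f(D)T=\bigcap_{Q\in\Max(T)}f(D)T_Q$ (this is the same device as the conductor argument in the paper), and flatness gives $T_Q=D_{Q\cap D}$, so everything reduces to the local statement $f(D)D_P=f(D_P)D_P$ for a localization $D_P$. At exactly this point the paper simply invokes \cite[Theorem~I.2.1]{intD}, which \emph{is} that local statement, and stops.

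The gap is in your decision to reprove the local statement rather than cite it. The Gregory--Newton identity $f(d+y)=\sum_k\binom{y}{k}\Delta^kf(d)$ puts $\binom{y}{k}$ in $D_P$ only when $k!$ is a unit there, i.e.\ when the residue characteristic is $0$ or larger than $\deg f$; your Lagrange-interpolation fallback needs $\deg f+1$ elements of $D$ with pairwise distinct residues modulo $P$, i.e.\ $|D/P|\geq\deg f+1$. Neither covers the case of a small residue field of small characteristic (e.g.\ $|D/P|=2$ and $\deg f\geq 2$), which is precisely the case of interest for integer-valued polynomials: when the residue fields are large or infinite, $\Int(D_P)$ tends to be trivial and the whole question loses its content. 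You acknowledge this residual case but leave it to ``a more delicate polynomial identity,'' so as written the heart of the local step is unproved. The repair is immediate and costless: quote \cite[Theorem~I.2.1]{intD} for $f(D)D_P=f(D_P)D_P$, as the paper does; with that citation your reduction yields a complete proof that coincides in substance with the paper's.
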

\begin{proof}
Clearly $f(D)\subseteq f(T)$, and thus $f(D)T\subseteq f(T)T$. Conversely, let $t\in T$: we need to show that $f(t)\in f(D)T$. Consider $I:=(f(D)T:f(t))=(f(D)D:f(t))T$. If $f(t)\notin f(D)T$, then $(f(D)D:f(t))\subseteq P$ for some prime ideal $P$ such that $PT\neq T$; hence, $1\notin(f(D)D:f(t))D_P=(f(D)D_P:f(t))$. However, $f(D)D_P=f(D_P)D_P$ by \cite[Theorem I.2.1]{intD} (since $D_P$ is a localization of $D$), and $f(t)\in D_P$ since $T\subseteq D_P$ (by the flatness of $T$). This is a contradiction, and thus $f(t)\in f(D)T$ and $f(D)T=f(T)T$.
\end{proof}

\begin{prop}\label{prop:f-in-loc}
Let $T$ be a flat overring of $D$, and let $f\in K[X]$.
\begin{enumerate}[(a)]
\item\label{prop:f-in-loc:DT} $f\in\Int(D)T$ if and only if $(D:_Df(D))T=T$.
\item\label{prop:f-in-loc:T} $f\in\Int(T)$ if and only if $(T:_Tf(D)T)=T$.
\end{enumerate}
\end{prop}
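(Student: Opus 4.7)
The plan is to handle (a) first, since it is the substantive part, and then to deduce (b) nearly formally from the preceding proposition. For the easy direction of (a), I assume $(D:_D f(D))T = T$ and write $1 = \sum_i a_i t_i$ with $a_i \in (D:_D f(D))$ and $t_i \in T$; since each $a_i f$ lies in $\Int(D)$ by definition of the conductor, the expression $f = \sum_i (a_i f) t_i$ exhibits $f$ as an element of $\Int(D)T$.

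For the converse direction of (a), which is the crux of the proof, I would start from a presentation $f = g_1 t_1 + \cdots + g_n t_n$ with $g_i \in \Int(D)$ and $t_i \in T$, and form the finitely generated $D$-submodule $M := D + Dt_1 + \cdots + Dt_n$ of $T$. For every $d \in D$ one has $f(d) = \sum_i g_i(d) t_i \in M$, because each $g_i(d)\in D$; hence $f(D) \subseteq M$ and $(D:_D M) \subseteq (D:_D f(D))$. Since $1 \in M$, the conductor $(D:M)=\{x\in K\mid xM\subseteq D\}$ is automatically contained in $D$, so it coincides with $(D:_D M)$. Applying the flat extension formula \cite[Theorem 7.4]{matsumura} to the finitely generated module $M$ then gives $(D:M)T = (T:MT)$, and the right-hand side equals $T$ because $MT = T$ (again since $1 \in M$). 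Combining inclusions yields $(D:_D f(D))T \supseteq (D:_D M)T = T$, as required.

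For part (b) the argument is essentially automatic: $f \in \Int(T)$ is equivalent to $f(T) \subseteq T$, hence to $f(T)T \subseteq T$, and the previous proposition identifies $f(T)T$ with $f(D)T$; thus the condition becomes $f(D)T \subseteq T$, i.e.\ $1 \in (T:_T f(D)T)$, i.e.\ $(T:_T f(D)T) = T$. The only delicate point in the whole proposition is the nontrivial direction of (a): one cannot apply the flat colon formula directly to $f(D)$, which is not finitely generated as a $D$-module, and the resolution is to replace $f(D)$ with the finitely generated module $M$, whose generators are naturally suggested by a presentation of $f$ inside $\Int(D)T$, and to exploit the normalization $1 \in M$ to force both $MT = T$ and the equality $(D:M)=(D:_D M)$.
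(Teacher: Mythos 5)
Your proof is correct, and while the easy direction of (a) and all of part (b) coincide with the paper's argument, your treatment of the crucial direction of (a) follows a genuinely different route. The paper reduces to localizations: writing $T=D_P$ it uses $\Int(D)D_P=\Int(D)_P$ to extract $s\in D\setminus P$ with $sf\in\Int(D)$, hence $s\in(D:_Df(D))$, and then handles a general flat overring $T$ by running this over all primes $P$ with $PT\neq T$ (where $T\subseteq D_P$) and invoking the fact that every maximal ideal of $T$ is extended from $D$. You instead argue globally: from a presentation $f=g_1t_1+\cdots+g_nt_n$ with $g_i\in\Int(D)$, $t_i\in T$, you form the finitely generated $D$-module $M=D+Dt_1+\cdots+Dt_n$, observe $f(D)\subseteq M$ and $1\in M$, and apply the flat colon formula $(D:M)T=(T:MT)$ of \cite[Theorem 7.4]{matsumura} (recorded in the paper's preliminaries and used there for the Noetherian case of Proposition \ref{prop:localizz}) to get $(D:_Df(D))T\supseteq(D:_DM)T=(T:MT)=T$, the reverse inclusion being trivial. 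The key point you correctly isolate is that one cannot apply the colon formula to $f(D)$ itself, and adjoining $1$ to the module simultaneously forces $MT=T$ and $(D:M)=(D:_DM)\subseteq D$. Your argument buys a prime-free, more self-contained proof resting only on the flat conductor formula; the paper's buys a proof closer in spirit to the classical localization statements (such as \cite[Theorem I.2.1]{intD}) that the section is generalizing. Both are complete.
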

\begin{proof}
\ref{prop:f-in-loc:DT} If $(D:_Df(D))T=T$, then $1=d_1t_1+\cdots+d_nt_n$ for some $d_i\in(D:_Df(D))$, $t_i\in T$; hence
\begin{equation*}
f(X)=f(X)(d_1t_1+\cdots+d_nt_n)=\sum_{i=1}^n(f(X)d_it_i).
\end{equation*}
However, $f(d)d_i\in D$ for all $d\in D$, since $d_i\in(D:_Df(D))$, and thus each $f(X)d_i\in\Int(D)$. Hence $f(X)\in\Int(D)T$. Conversely, suppose $f\in\Int(D)T$. If $T=D_P$ for some prime ideal $P$, then $\Int(D)D_P=\Int(D)_P$ and thus there is an $s\in D\setminus P$ such that $sf\in\Int(D)$; hence $s\in(D:_Df(D))$ and $(D:_Df(D))D_P=D_P$. For the general case, if $f\in\Int(D)T$ then $f\in\Int(D)D_P$ for all prime ideals $P$ of $D$ such that $PT\neq T$ (as $T$ is flat, $T\subseteq D_P$ for all such $P$), and thus $(D:_Df(D))D_P=D_P$; the claim now follows from the fact that all maximal ideals of $T$ are extensions of prime ideals of $D$.

\ref{prop:f-in-loc:T} If $f\in\Int(T)$ then $f(T)\subseteq T$ and thus $f(D)T\subseteq T$; hence $(T:_Tf(D)T)=T$. Conversely, if $(T:_Tf(D)T)=T$ then it contains $1$, and thus $f(D)T\subseteq T$. Since $f(D)T=f(T)T$, we have $f(T)\subseteq T$ and $f\in\Int(T)$.
\end{proof}

Joining the two characterizations, we have our criterion.
\begin{teor}\label{teor:loc}
Let $T$ be a flat overring of $D$. Then, $\Int(D)T=\Int(T)$ if and only if $(D:_Df(D))T=(T:_Tf(D)T)$ for every $f\in K[X]$.
\end{teor}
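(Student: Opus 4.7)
The ``if'' direction follows immediately from Proposition~\ref{prop:f-in-loc}: if $(D:_Df(D))T=(T:_Tf(D)T)$ holds for every $f\in K[X]$, then the conditions ``$(D:_Df(D))T=T$'' and ``$(T:_Tf(D)T)=T$'' are equivalent, and by the two parts of Proposition~\ref{prop:f-in-loc} they characterize membership of $f$ in $\Int(D)T$ and in $\Int(T)$ respectively; thus $\Int(D)T=\Int(T)$.

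For the converse, fix $f\in K[X]$. The inclusion $(D:_Df(D))T\subseteq(T:_Tf(D)T)$ is straightforward: if $d\in D$ satisfies $df(D)\subseteq D$ and $t\in T$, then $dt\in T$ and $(dt)f(D)T\subseteq tDT\subseteq T$. The substance is the reverse inclusion. Given $t\in(T:_Tf(D)T)$, consider $g(X):=tf(X)\in K[X]$. Then $g(D)=tf(D)\subseteq T$, and using the identity $f(D)T=f(T)T$ established just before Proposition~\ref{prop:f-in-loc} we get $g(T)T=tf(T)T=tf(D)T\subseteq T$, so $g\in\Int(T)$. By hypothesis $g\in\Int(D)T$, so Proposition~\ref{prop:f-in-loc} yields $(D:_Dg(D))T=T$. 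Writing $1=\sum_i c_is_i$ with $c_i\in D$, $s_i\in T$ and $c_itf(D)\subseteq D$, and multiplying by $t$, we obtain $t=\sum_i(c_it)s_i$, where each $c_it$ lies in $T$ and satisfies $(c_it)f(D)\subseteq D$.

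The main obstacle is that the elements $c_it$ need not lie in $D$, so a priori they do not belong to $(D:_Df(D))$. To circumvent this, we invoke a standard fact about flat overrings: for every $y\in T$, the $D$-ideal $(D:_Dy):=\{d\in D\mid dy\in D\}$ satisfies $(D:_Dy)T=T$. Indeed, $T$ is an intersection of localizations $D_P$ over the primes $P$ of $D$ with $PT\neq T$, so for any such $P$ we have $y\in D_P$, whence $(D:_Dy)\not\subseteq P$; consequently the extension to $T$ is contained in no maximal ideal of $T$ and therefore equals $T$. Applying this to each $y=c_it$ produces $e_{i,j}\in D$ and $u_{i,j}\in T$ with $e_{i,j}c_it\in D$ and $\sum_je_{i,j}u_{i,j}=1$. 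Each product $e_{i,j}c_it$ still sends $f(D)$ into $D$ and now lies in $D$, so it belongs to $(D:_Df(D))$. This gives $c_it=\sum_j(e_{i,j}c_it)u_{i,j}\in(D:_Df(D))T$ and hence $t=\sum_i(c_it)s_i\in(D:_Df(D))T$, completing the reverse inclusion.
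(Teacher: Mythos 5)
Your argument is correct and follows essentially the same route as the paper, whose entire proof consists of applying the two characterizations of Proposition \ref{prop:f-in-loc}. The extra work you supply for the inclusion $(T:_Tf(D)T)\subseteq(D:_Df(D))T$ — passing to $g=tf$ for $t\in(T:_Tf(D)T)$ and then using that $(D:_Dy)T=T$ for every $y\in T$ when $T$ is flat — is precisely the detail the paper's one-line proof leaves implicit, and it is sound.
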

\begin{proof}
It is enough to apply the two conditions of Proposition \ref{prop:f-in-loc}.
\end{proof}

As a consequence, we get back several known results about the possibility of localizing the ring $\Int(D)$. Recall that a \emph{Mori domain} is a domain whose divisorial ideals satisfy the ascending chain condition.
\begin{prop}\label{prop:localizz}
Let $T$ be a flat overring of $D$. Suppose that one of the following conditions hold:
\begin{enumerate}[(a)]
\item\label{prop:localizz:jaff} $T$ is a Jaffard overring of $D$;
\item\label{prop:localizz:tjaff} $T$ is a $t$-Jaffard overring of $D$;
\item\label{prop:localizz:1dim} $D$ is one-dimensional and locally finite;
\item\label{prop:localizz:noeth}\cite[Theorem I.2.3]{intD} $D$ is Noetherian;
\item\label{prop:localizz:mori}\cite[Proposition 2.1]{mori-Intvalued} $D$ is Mori.
\end{enumerate}
Then, we have $\Int(D)T=\Int(T)$.
\end{prop}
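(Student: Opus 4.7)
The plan is to verify, in each case, the criterion of Theorem~\ref{teor:loc}, namely $(D:_D f(D))T=(T:_T f(D)T)$. Writing $f(X)=\sum_i a_iX^i$, one has $f(D)\subseteq\sum_i a_iD$, so $I:=f(D)D$ is always a fractional ideal of $D$. Moreover, since $T$ is flat over $D$, intersection of $D$-submodules of $K$ commutes with extension by $T$; applied to $(D:I)$ and $D$ inside $K$, this yields $(D:_D I)T=((D:I)\cap D)T=(D:I)T\cap T$, and an entirely analogous identity $(T:_T IT)=(T:IT)\cap T$ holds. Using $(D:_D I)=(D:_D f(D))$ and $IT=f(D)T$, the criterion reduces uniformly to the single identity $(D:I)T=(T:IT)$, which I would prove case by case.

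Cases~\ref{prop:localizz:jaff} and \ref{prop:localizz:tjaff} collapse into one: every Jaffard family is a $t$-Jaffard family by inspection of the definitions, so \ref{prop:localizz:jaff} reduces to \ref{prop:localizz:tjaff}, which is exactly Proposition~\ref{prop:colon-tjaff} applied to the fractional ideal $I$. For case~\ref{prop:localizz:noeth}, Noetherianity of $D$ forces $I$ to be a finitely generated fractional ideal, and the desired equality then follows directly from the colon formula for flat overrings and finitely generated modules recorded in the preliminaries. For case~\ref{prop:localizz:mori}, I would invoke the standard characterization of Mori domains: for every fractional ideal $I$ there exists a finitely generated subideal $J\subseteq I$ with $(D:J)=(D:I)$. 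One then gets
\begin{equation*}
(D:I)T=(D:J)T=(T:JT)\supseteq(T:IT)\supseteq(D:I)T,
\end{equation*}
where the first equality is the Mori characterization, the second is the colon formula applied to $J$, the first containment follows from $JT\subseteq IT$, and the last is always valid; so all terms coincide.

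For case~\ref{prop:localizz:1dim}, the plan is to deduce it from case~\ref{prop:localizz:tjaff} by showing that every flat overring $T$ of a one-dimensional locally finite domain $D$ is itself a $t$-Jaffard overring. Set $\mathcal{S}(T):=\{M\in\Max(D)\mid T\subseteq D_M\}$; flatness gives $T=\bigcap_{M\in\mathcal{S}(T)}D_M$, and I would consider the family
\begin{equation*}
\Theta:=\{T\}\cup\{D_M\mid M\in\Max(D)\setminus\mathcal{S}(T)\}.
\end{equation*}
Flatness of its elements, local finiteness inherited from $\{D_M\}_{M\in\Max(D)}$, and the intersection condition $\bigcap\Theta=D$ are immediate. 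The main obstacle that I anticipate is verifying the independence condition, namely $TD_M=K$ for $M\notin\mathcal{S}(T)$ (and $D_MD_{M'}=K$ for distinct $M,M'\notin\mathcal{S}(T)$). I would argue that, since $D$ is one-dimensional, any nonzero prime of $TD_M$ contracts to a maximal ideal of $D$ that must lie both in $\mathcal{S}(T)$ (because it extends to a proper prime of $T$) and equal $M$ (because it extends to a proper prime of $D_M$), which is impossible; hence $TD_M$ has only the zero ideal as a prime and equals $K$.
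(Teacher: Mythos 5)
Your proof is correct and follows essentially the same route as the paper: verify the conductor criterion of Theorem \ref{teor:loc} case by case, handling (a)--(b) via Proposition \ref{prop:colon-tjaff}, (d) via finite generation of $f(D)D$ and the flat colon formula, (e) via the same strictly $v$-finite chain of inclusions, and (c) by reducing to the ($t$-)Jaffard case. The only difference is that you spell out details the paper leaves implicit, namely the passage from $(D:f(D))T=(T:f(D)T)$ to $(D:_Df(D))T=(T:_Tf(D)T)$ using flatness, and an explicit verification (via the family $\{T\}\cup\{D_M\mid M\in\Max(D)\setminus\mathcal{S}(T)\}$) that a flat overring of a one-dimensional locally finite domain is a $t$-Jaffard overring, which the paper simply asserts.
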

\begin{proof}
If $T$ is a Jaffard or $t$-Jaffard overring, then $(D:I)T=(T:IT)$ for all ideals $I$, and thus in particular for $I=f(D)D$. If $D$ is one-dimensional and locally finite, then every flat overring is a Jaffard overring and we are in the previous case.

If $D$ is Noetherian, then $f(D)D$ is finitely generated, and thus we can bring the flat overring inside the conductor.

If $D$ is Mori, then every ideal is strictly $v$-finite, and thus there is a finitely generated ideal $J\subseteq f(D)D$ such that $(D:f(D))=(D:J)$. Hence,
\begin{equation*}
(D:f(D))T=(D:J)T=(T:JT)\supseteq(T:f(D)T)\supseteq(D:f(D))T
\end{equation*}
and thus $(D:f(D))T=(T:f(D)T)$.
\end{proof}

We end this section with a result which will be useful alter.
\begin{prop}\label{prop:localizz-intersec}
Let $D$ be an integral domain, $T$ a flat overring and $\Lambda$ be a complete family of flat overrings of $T$. If $\Int(D)S=\Int(S)$ for every $S\in\Lambda$, then $\Int(D)T=\Int(T)$.
\end{prop}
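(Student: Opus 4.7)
The plan is to establish the two inclusions $\Int(D)T\subseteq\Int(T)$ and $\Int(T)\subseteq\Int(D)T$ separately, with only the second being nontrivial.

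For $\Int(D)T\subseteq\Int(T)$, I would argue directly: if $f\in\Int(D)$, then $f(D)T\subseteq DT=T$, so Proposition \ref{prop:f-in-loc}\ref{prop:f-in-loc:T} gives $f\in\Int(T)$; since $T\subseteq\Int(T)$, multiplying by elements of $T$ keeps us in $\Int(T)$, yielding the inclusion.

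For the reverse inclusion I would take $f\in\Int(T)$ and aim to verify the criterion of Proposition \ref{prop:f-in-loc}\ref{prop:f-in-loc:DT}, namely $(D:_Df(D))T=T$. The idea is to test this equality locally against each member of $\Lambda$ and then reassemble by completeness. More precisely, for each $S\in\Lambda$ the same easy argument as in the previous paragraph, applied to the pair $T\subseteq S$, shows $\Int(T)\subseteq\Int(S)$, so $f\in\Int(S)=\Int(D)S$ by hypothesis; Proposition \ref{prop:f-in-loc}\ref{prop:f-in-loc:DT} then translates this into $(D:_Df(D))S=S$. Setting $J:=(D:_Df(D))T$ and using $TS=S$, I get $JS=S$ for every $S\in\Lambda$.

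At this point I would invoke completeness of $\Lambda$ as a family of flat overrings of $T$: every ideal of $T$ equals the intersection of its extensions to the members of $\Lambda$. Applied to the unit ideal this yields $T=\bigcap_{S\in\Lambda}S$, and applied to $J$ it then gives
\begin{equation*}
J=\bigcap_{S\in\Lambda}JS=\bigcap_{S\in\Lambda}S=T,
\end{equation*}
so $(D:_Df(D))T=T$ and Proposition \ref{prop:f-in-loc}\ref{prop:f-in-loc:DT} delivers $f\in\Int(D)T$. I do not foresee any real obstacle; the only point requiring some care is the correct interpretation of ``complete family of flat overrings of $T$'' (read relative to $T$, as in the pre-Jaffard definition), which is what makes the final intersection step legitimate.
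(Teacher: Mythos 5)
Your proof is correct, and it follows the same first step as the paper: for $f\in\Int(T)$ and $S\in\Lambda$ one has $f(T)\subseteq T\subseteq S$, whence (using flatness, via $f(T)S=f(S)S$ or Proposition \ref{prop:f-in-loc}\ref{prop:f-in-loc:T}) $f\in\Int(S)=\Int(D)S$. Where you diverge is in the globalization step. The paper simply intersects the rings: $f\in\bigcap_{S\in\Lambda}\Int(D)S=\bigcap_{S\in\Lambda}\Int(D)TS=\Int(D)T$, invoking completeness of $\Lambda$ over $T$; strictly speaking this applies the completeness property to the $T$-module $\Int(D)T$ rather than to an ideal of $T$, which requires a (standard but unstated) extension of the definition, e.g.\ via a conductor argument. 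Your route instead converts membership in each $\Int(D)S$ into the ideal-theoretic condition $(D:_Df(D))S=S$ through Proposition \ref{prop:f-in-loc}\ref{prop:f-in-loc:DT} (legitimate, since each $S$ is flat over $D$ by transitivity), and then applies completeness literally to the ideal $J=(D:_Df(D))T$ of $T$, getting $J=\bigcap_S JS=\bigcap_S S=T$ and concluding by Proposition \ref{prop:f-in-loc}\ref{prop:f-in-loc:DT} again. So your argument is essentially the paper's proof with the module-level intersection replaced by the conductor criterion; what it buys is that completeness is used exactly as defined (for ideals), making the final step more self-contained, at the cost of routing everything through Proposition \ref{prop:f-in-loc}. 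The easy inclusion $\Int(D)T\subseteq\Int(T)$, which you prove explicitly, is indeed the trivial direction (the paper omits it), and your justification via $f(D)T\subseteq T$ is fine.
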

\begin{proof}
Let $f\in\Int(T)$: then, $f(T)\subseteq T\subseteq S$ for every $S\in\Lambda$, i.e., $f\in\Int(T,S)=\Int(S)=\Int(D)S$. Hence,
\begin{equation*}
f\in\bigcap_{S\in\Lambda}\Int(D)S=\bigcap_{S\in\Lambda}\Int(D)TS=\Int(D)T,
\end{equation*}
since $\Lambda$ is complete over $T$. The claim is proved.
\end{proof}

\section{The Picard group}
When $\Int(D)$ is nontrival, a direct calculation of its Picard group can only be done under very special circumstances, for example when $D$ is a discrete valuation ring or an analytically irreducible one-dimensional domain \cite[Proposition VIII.2.8 and Corollary VIII.3.10]{intD}. To reach more cases, the main tool is globalization: for example, if $D$ is one-dimensional Noetherian domain, then there is a short exact sequence \cite[Theorem VIII.1.9]{intD}
\begin{equation*}
0\longrightarrow\Pic(D)\longrightarrow\Pic(\Int(D))\longrightarrow\bigoplus_{M\in\Max(D)}\Pic(\Int(D_M))\longrightarrow 0.
\end{equation*}
In this section, we begin to extend the use of this kind of exact sequence by considering the case of Jaffard families.

\begin{defin}
Let $T$ be a flat overring of $D$. Then \emph{extension map} of Picard groups is the group homomorphism
\begin{equation*}
\begin{aligned}
\psi_{D,T}\colon\Pic(D) & \longrightarrow\Pic(T),\\
[I] & \longmapsto [IT].
\end{aligned}
\end{equation*}

If $\Theta$ is a family of flat overrings of $D$, the \emph{Picard group of $D$ relative to $\Theta$} is
\begin{equation*}
\Pic(D,\Theta):=\{[I]\in\Pic(D)\mid [IT]=[T]\text{~for all~}T\in\Theta\}.
\end{equation*}
If $\Theta=\{T\}$ we write $\Pic(D,T):=\Pic(D,\{T\})$.
\end{defin}

Since $\psi_{D,T}$ is a group homomorphism, $\Pic(D,\Theta)$ is always a subgroup of $\Pic(D)$ (indeed, it is the intersection of the kernels of the $\psi_{D,T}$, as $T$ ranges in $\Theta$). When every element of $\Theta$ is local, $\Pic(D,\Theta)=\Pic(D)$.

The starting point of the globalization results of \cite[Chapter VIII]{intD} is an extension map from $\Int(D)$ to the direct product of $\Int(D)_M=\Int(D)D_M$, as $M$ ranges among the maximal ideals of $D$ \cite[Proposition VIII.1.6]{intD}. Likewise, our study begins by examining the extension map
\begin{equation}\label{eq:pitheta}
\begin{aligned}
\pi_\Theta\colon\Pic(\Int(D)) & \longrightarrow\prod_{T\in\Theta}\Pic(\Int(T)),\\
[I] & \longmapsto ([I\Int(T)]),
\end{aligned}
\end{equation}
for some arbitrary family $\Theta$ of flat overrings. 

\begin{lemma}
Let $T$ be a flat overring of $D$. Then, $\Int(D)T$ is a flat overring of $\Int(D)$.
\end{lemma}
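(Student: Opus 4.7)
The plan is to show that the natural multiplication map $\mu\colon \Int(D)\otimes_D T\to \Int(D)T$ is an isomorphism of $\Int(D)$-modules, and then invoke base change of flat modules to conclude.

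First I would verify that $\Int(D)T$ really is an overring of $\Int(D)$: since $D[X]\subseteq \Int(D)\subseteq K[X]$, the quotient field of $\Int(D)$ is $K(X)$, and $\Int(D)T$ sits between $\Int(D)$ and $K[X]\subseteq K(X)$.

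The substantive step is to identify $\Int(D)T$ with $\Int(D)\otimes_D T$. The map $\mu$ is clearly surjective, so only injectivity is at stake. I would argue as follows. The inclusion $\Int(D)\hookrightarrow K[X]$ remains injective after tensoring with the flat $D$-module $T$, giving an embedding
\begin{equation*}
\Int(D)\otimes_D T\hookrightarrow K[X]\otimes_D T.
\end{equation*}
Writing $K[X]=\bigoplus_{n\in\insN} K\cdot X^n$ as a $D$-module and using that the multiplication map $K\otimes_D T\to K$ is an isomorphism (it is surjective, and it is the base change of the inclusion $T\hookrightarrow K$ by the flat module $K$, hence injective), we obtain a canonical isomorphism $K[X]\otimes_D T\cong K[X]$ that sends $p\otimes t$ to $pt$. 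Composing, the map $\Int(D)\otimes_D T\to K[X]$ given by $f\otimes t\mapsto ft$ is injective, and its image is exactly $\Int(D)T$. Hence $\mu$ is an isomorphism.

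Finally, since $T$ is flat over $D$, the base change $\Int(D)\otimes_D T$ is a flat $\Int(D)$-module; transporting this flatness across $\mu$ shows that $\Int(D)T$ is flat over $\Int(D)$, as required. The only mildly delicate point is the identification $K[X]\otimes_D T\cong K[X]$, but it reduces to the elementary observation that $K\otimes_D T=K$ for any $D$-submodule $T$ of $K$ containing $D$.
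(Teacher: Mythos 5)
Your argument is correct, but it takes a genuinely different route from the paper's. The paper writes the flat overring $T$ as a directed colimit of cyclic free $D$-submodules $x_iD$ of $K$ and observes that $\Int(D)T$ is then the directed colimit of the free $\Int(D)$-modules $x_i\Int(D)$, hence flat; you instead identify $\Int(D)T$ with $\Int(D)\otimes_D T$ via the multiplication map and invoke stability of flatness under base change. Your route proves slightly more --- namely that the multiplication map $\Int(D)\otimes_D T\to\Int(D)T$ is an isomorphism, a statement of independent interest --- and it rests only on standard homological facts: exactness of $-\otimes_D T$, the isomorphism $K\otimes_D T\cong K$, and compatibility of the tensor product with the decomposition $K[X]=\bigoplus_n K\cdot X^n$. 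The injectivity of $\mu$ is indeed the only substantive point, and you handle it correctly; the one step worth making explicit is that in the chain $K\otimes_D T\hookrightarrow K\otimes_D K\to K$ the second map is an isomorphism because $K$ is a localization of $D$ (so $K\otimes_D K\cong K$), which is precisely the ``elementary observation'' you defer to the end. By contrast, the paper's proof is shorter but leans on the specific structure of flat $D$-submodules of $K$ as directed unions of principal fractional ideals, while yours would generalize verbatim to any flat $D$-algebra contained in $K$ in place of $T$.
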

\begin{proof}
Since $T$ is flat, it is the colimit of a directed set $\{M_i\}$ of free $D$-modules; since each of these is contained in the quotient field of $D$, there are $x_i$ such that $M_i=x_iD$. It is straightforward to see that $\Int(D)T$ is the colimit of $\{x_i\Int(D)=\Int(D)M_i\}$, and thus it is flat over $\Int(D)$.
\end{proof}

\begin{prop}\label{prop:prejaff-exact}
Let $\Theta$ be a family of flat overrings of $D$. Then, there is an exact sequence
\begin{equation*}
0\longrightarrow\Pic(D,\Theta)\longrightarrow\Pic(\Int(D))\xrightarrow{~~\pi_\Theta~~}\prod_{T\in\Theta}\Pic(\Int(D)T).
\end{equation*}
\end{prop}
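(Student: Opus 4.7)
Proof plan. The two nontrivial parts of the exactness claim are the injectivity of the leftmost arrow and the equality of its image with $\ker\pi_\Theta$. The leftmost arrow factors as the inclusion $\Pic(D,\Theta)\hookrightarrow\Pic(D)$ composed with the extension $\iota_*\colon\Pic(D)\to\Pic(\Int(D))$, $[I]\mapsto[I\Int(D)]$. To establish injectivity of $\iota_*$ (and hence of its restriction to $\Pic(D,\Theta)$), I would use the evaluation-at-zero homomorphism $\varepsilon\colon\Int(D)\to D$, $f\mapsto f(0)$, a ring retraction of the inclusion $D\hookrightarrow\Int(D)$: by functoriality of $\Pic$ the induced map $\varepsilon_*\colon\Pic(\Int(D))\to\Pic(D)$ is a left inverse of $\iota_*$.

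The containment of the image in $\ker\pi_\Theta$ is a direct calculation. If $[I]\in\Pic(D,\Theta)$, then for each $T\in\Theta$ one can write $IT=s_TT$ with $s_T\in T$, whence
\begin{equation*}
I\Int(D)T=(IT)\Int(D)T=s_T\Int(D)T
\end{equation*}
is principal in $\Int(D)T$ (which is a flat overring of $\Int(D)$ by the preceding lemma); hence $\pi_\Theta([I\Int(D)])=0$.

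For the reverse inclusion $\ker\pi_\Theta\subseteq\mathrm{image}$, my plan is to pull back through the retraction: given $[J]\in\ker\pi_\Theta$, set $[I]:=\varepsilon_*([J])\in\Pic(D)$; by functoriality applied at each $T\in\Theta$ (i.e.\ via the analogous retraction $\Int(D)T\to T$), the hypothesis that $J\Int(D)T$ is principal forces $[IT]=[T]$ in $\Pic(T)$, placing $[I]$ in $\Pic(D,\Theta)$. The equality $[I\Int(D)]=[J]$ in $\Pic(\Int(D))$ would then reduce, via the splitting $\Pic(\Int(D))\simeq\iota_*(\Pic(D))\oplus\ker\varepsilon_*$ induced by $\iota_*$ and $\varepsilon_*$, to the triviality $\ker\varepsilon_*\cap\ker\pi_\Theta=0$. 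This last vanishing---saying that $\Theta$ detects the ``non-constant'' part of $\Pic(\Int(D))$---is the chief obstacle, and is where the flatness of $\Int(D)T$ over $\Int(D)$ and the structural role of $\Theta$ have to be exploited.
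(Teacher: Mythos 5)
The parts of your argument that you actually carry out are fine: evaluation at $0$ retracts $\Int(D)$ onto $D$, so $\iota_*\colon\Pic(D)\to\Pic(\Int(D))$ is split injective (a point the paper leaves implicit); the computation showing $\iota_*(\Pic(D,\Theta))\subseteq\ker\pi_\Theta$ is the same as the paper's; and since evaluation at $0$ also retracts $\Int(D)T$ onto $T$ compatibly with the inclusions, your class $[I]=\varepsilon_*([J])$ does land in $\Pic(D,\Theta)$.

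However, the proof is not complete, and the gap sits exactly where the content of the proposition lies. Given the splitting $\Pic(\Int(D))=\iota_*(\Pic(D))\oplus\ker\varepsilon_*$ and the containment already proved, the vanishing $\ker\varepsilon_*\cap\ker\pi_\Theta=0$ that you defer is \emph{equivalent} to the inclusion $\ker\pi_\Theta\subseteq\iota_*(\Pic(D,\Theta))$ you set out to prove; so the reduction reformulates the problem rather than advancing it, as you yourself signal. It also cannot be closed by purely formal or functorial means: flatness of $\Int(D)T$ over $\Int(D)$ alone is not enough (for $\Theta=\{K\}$ the asserted vanishing would say that $\Pic(D)\to\Pic(\Int(D))$ is surjective, which is generally false), so a genuine structural property of the family must be used. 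The paper supplies precisely this step: after replacing the given invertible ideal of $\Int(D)$ by a unitary one \cite[Remark VIII.1.5]{intD}, say $I$, one sets $J_0:=I\cap D$ and observes that each $I\Int(D)T$, being principal and containing a nonzero constant, is generated by a constant lying in $J_0$, so that $I\Int(D)T=J_0\Int(D)T$ for every $T\in\Theta$; then, since invertible ideals are divisorial and hence fixed by the star operation $A\mapsto\bigcap_{T\in\Theta}A\Int(D)T$ on $\Int(D)$, one concludes $I=J_0\Int(D)$, i.e.\ the class is extended from $\Pic(D,\Theta)$. (This star-operation step is where the family really enters: it uses $\bigcap_{T\in\Theta}\Int(D)T=\Int(D)$, which holds in the paper's applications because $\Theta$ is complete.) An argument of this kind---showing the local generators can be taken constant and recovering $I$ from its extensions to the rings $\Int(D)T$---is the missing ingredient your proposal would need.
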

\begin{proof}
Let $i:\Pic(D,\Theta)\longrightarrow\Pic(\Int(D))$, $I\mapsto I\Int(D)$ be the extension map; we need to show that $i(\Pic(D,\Theta))=\ker\pi_\Theta$. If $[I]\in\Pic(D,\Theta)$, then $I$ becomes principal in each $T\in\Theta$, and thus $\pi_\Theta\circ i([I])=[I\Int(D)T]$ is principal, i.e., $i(\Pic(D,\Theta))\subseteq\ker\pi_\Theta$.

Conversely, suppose $[I]\in\Pic(\Int(D))$ becomes principal in each $\Pic(\Int(D)T)$. By \cite[Remark VIII.1.5]{intD}, we can suppose without loss of generality that $I$ is a unitary ideal of $\Int(D)$. Let $J:=I\cap D$. For each $T$, the ideal $JT$ is principal and generated by an element of $J$; hence, $I\Int(D)T=J\Int(D)T$ for each $T\in\Theta$. Let $\Lambda:=\{\Int(D)T\mid T\in\Theta\}$; then, the map $\star_\Lambda:I\mapsto\bigcap\{IT\mid T\in\Lambda\}$ is a star operation (see e.g. \cite[Chapter 32]{gilmer}) and $I^{\star_\Lambda}=(J\Int(D))^{\star_\Lambda}$. Since $I$ is invertible, $I=I^{\star_\Lambda}$, and analogously $J\Int(D)=(J\Int(D))^{\star_\Lambda}$; thus $I=J\Int(D)$. Hence $[J]\in i(\Pic(D,\Theta))$ and $i(\Pic(D,\Theta))\supseteq\ker\pi_\Theta$, as claimed.
\end{proof}

We now prove the first theorem of this section.
\begin{teor}\label{teor:ext-pic-jaffard}
Let $\Theta$ be a Jaffard family of $D$. Then, there is an exact sequence
\begin{equation*}
0\longrightarrow\Pic(D,\Theta)\longrightarrow\Pic(\Int(D))\longrightarrow\bigoplus_{T\in\Theta}\Pic(\Int(T))\longrightarrow 0.
\end{equation*}
\end{teor}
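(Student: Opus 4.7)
My plan is to build on Proposition \ref{prop:prejaff-exact}, which already yields the exact sequence
\begin{equation*}
0 \longrightarrow \Pic(D,\Theta) \longrightarrow \Pic(\Int(D)) \xrightarrow{~\pi_\Theta~} \prod_{T \in \Theta} \Pic(\Int(D)T).
\end{equation*}
Since $\Theta$ is a Jaffard family, each $T \in \Theta$ is a Jaffard overring, so Proposition \ref{prop:localizz}\ref{prop:localizz:jaff} allows me to replace $\Int(D)T$ by $\Int(T)$. The injectivity of the first map and the identification of its image with $\Pic(D,\Theta)$ are therefore free, and the work reduces to (i) showing that the image of $\pi_\Theta$ lies in the direct sum $\bigoplus_{T \in \Theta} \Pic(\Int(T))$, and (ii) proving surjectivity onto this direct sum.

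For (i), given $[I] \in \Pic(\Int(D))$, I would use \cite[Remark VIII.1.5]{intD} (as in the proof of Proposition \ref{prop:prejaff-exact}) to reduce to the case where $I$ is unitary, so that $J := I \cap D$ is a nonzero ideal of $D$ with $J\Int(D) \subseteq I$. Fix any nonzero $j \in J$. Local finiteness of $\Theta$ yields a finite subset $F \subseteq \Theta$ such that $jT = T$ for every $T \in \Theta \setminus F$; for such $T$, the chain $I\Int(T) \supseteq J\Int(T) \supseteq jT = T$ forces $I\Int(T) = \Int(T)$, so $[I\Int(T)] = 0$ in $\Pic(\Int(T))$.

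For (ii), additivity of $\pi_\Theta$ reduces surjectivity to realizing every element of the direct sum supported at a single coordinate. Given $T_0 \in \Theta$ and a unitary invertible ideal $J_{T_0} \subseteq \Int(T_0)$, the natural candidate preimage is
\begin{equation*}
I := J_{T_0} \cap \Int(D),
\end{equation*}
the intersection taken inside $K(X)$. Completeness of $\Theta$ gives $\Int(D) = \bigcap_{T \in \Theta} \Int(T)$, whence $I \subseteq \Int(D)$; independence (so $TT_0 = K$ for $T \neq T_0$), combined with the compact intersection formula \cite[Corollary 5]{compact-intersections} applied to the locally finite family $\Theta \setminus \{T_0\}$, should produce $I\Int(T) = \Int(T)$ for $T \neq T_0$; and Proposition \ref{prop:colon-tjaff} (applicable because every Jaffard family is a $t$-Jaffard family) is the tool I would use to control how conductors extend along $\Int(D) \hookrightarrow \Int(T_0)$ and thereby obtain $I\Int(T_0) = J_{T_0}$. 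Invertibility of $I$ in $\Int(D)$ then follows by checking it locally on the Jaffard family $\Theta$, where it is trivial away from $T_0$ and equal to the invertible $J_{T_0}$ at $T_0$.

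The main obstacle is the identity $I\Int(T_0) = J_{T_0}$ in the surjectivity step. The inclusion $I\Int(T_0) \subseteq J_{T_0}$ is automatic, but the reverse inclusion — that elements of $J_{T_0} \cap \Int(D)$ generate all of $J_{T_0}$ after extension to $\Int(T_0)$ — is genuinely delicate, and is where the interplay of independence, completeness, and local finiteness of the Jaffard family must be exploited most carefully.
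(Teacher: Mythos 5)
Your reduction to Proposition \ref{prop:prejaff-exact}, the identification $\Pic(\Int(D)T)=\Pic(\Int(T))$ via Proposition \ref{prop:localizz}\ref{prop:localizz:jaff}, and your step (i) coincide with the paper's argument. The surjectivity step is where the real content lies, and as written it has genuine gaps. The identity $I\Int(T_0)=J_{T_0}$ that you single out as the main obstacle is in fact true, but Proposition \ref{prop:colon-tjaff} is not the tool that will give it to you: $\{\Int(D)T\mid T\in\Theta\}$ is not a $t$-Jaffard family of $\Int(D)$, because independence fails ($\Int(D)T\cdot\Int(D)S$ is contained in $K[X]$, never the quotient field $K(X)$), so $\Int(T_0)$ is not in any obvious way a $t$-Jaffard overring of $\Int(D)$. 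The short route to this identity is the lemma preceding Proposition \ref{prop:prejaff-exact}: $\Int(D)T_0$ is a \emph{flat} overring of $\Int(D)$, and every ideal of a flat overring is the extension of its contraction. The paper instead sidesteps the contraction altogether: it chooses generators $f_1,\ldots,f_n\in\Int(D)$ of $J_{T_0}$ (possible because $\Int(T_0)=\Int(D)T_0$) and works with the finitely generated ideal $I':=(f_1,\ldots,f_n)\Int(D)+(J_{T_0}\cap D)\Int(D)$.

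The second, more serious, gap is the closing claim that ``invertibility of $I$ follows by checking it locally on the Jaffard family $\Theta$.'' Invertible ideals are finitely generated, and your $I=J_{T_0}\cap\Int(D)$ is not obviously so; in the highly non-Noetherian ring $\Int(D)$, an ideal whose extensions to all the $\Int(D)T$ are invertible (indeed, even a locally principal ideal) need not be invertible without finite generation. Moreover, local principality cannot be tested along $\Theta$ alone: the maximal ideals $M$ of $\Int(D)$ with $M\cap D=(0)$ are invisible to the family $\{\Int(D)T\mid T\in\Theta\}$, and for them one must argue separately that $\Int(D)_M$ is a localization of $K[X]$ and that $I$ contains a nonzero constant, so that $I\Int(D)_M$ is principal. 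This is precisely why the paper builds the finitely generated $I'$ above, proves $I'\Int(T_0)=J_{T_0}$ and $I'\Int(S)=\Int(S)$ for $S\neq T_0$ (which uses the Jaffard-family fact that $(J_{T_0}\cap D)S=S$ for $S\neq T_0$, a point you also gesture at rather than prove), and then verifies local principality at \emph{every} maximal ideal of $\Int(D)$, treating the cases $M\cap D=(0)$ and $M\cap D\neq(0)$ separately. Until these points are supplied, your surjectivity argument is a plan rather than a proof.
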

\begin{proof}
We first note that, by Proposition \ref{prop:localizz}\ref{prop:localizz:jaff}, we have $\Int(D)T=\Int(T)$, and thus $\Pic(\Int(D)T)=\Pic(\Int(T))$.

Let $\Delta$ be the image of the extension map $\pi_\Theta:\Pic(\Int(D))\longrightarrow\prod\{\Pic(\Int(T))\mid T\in\Theta\}$. We claim that its image is just the direct sum.

Indeed, if $[I]\in\Pic(\Int(D))$ then by \cite[Remark VIII.1.5]{intD} we can suppose that $I$ is an integral unitary ideal of $\Int(D)$; in particular, $I$ contains a nonzero constant $a$. Since $\Theta$ is a Jaffard family, it is locally finite, and thus $aT=T$ for all but finitely many $T\in\Theta$; hence, $I\Int(T)=\Int(T)$ for all but finitely many $T$, and thus $[I\Int(T)]$ is almost always equal to $[\Int(T)]$. It follows that $\Delta$ is contained in the direct sum.

To prove the converse, it is enough to show that, for any $T\in\Theta$ and any $[J]\in\Pic(\Int(T))$, there is a $[I]\in\Pic(\Int(D))$ such that $[I\Int(T)]=[J]$ and $[I\Int(S)]=[\Int(S)]$ for every $S\in\Theta$, $S\neq T$. Again by \cite[Remark VIII.1.5]{intD} we can suppose that $J$ is an integral unitary ideal of $\Int(T)$; moreover, we can suppose that $J=(f_1,\ldots,f_n)\Int(T)$ for some $f_1,\ldots,f_n\in\Int(D)$. Let $L:=J\cap D=J\cap T\cap D=I\cap D$: then, $LS=S$ for every $S\in\Theta$, $S\neq T$. Then, $I':=(f_1,\ldots,f_n)\Int(D)+L\Int(D)$ is contained in $I$ and finitely generated, but $I'\Int(T)=J$ and $I'\Int(S)=\Int(S)$, as well as $I'K[X]=K[X]$. It follows that $I'=I$, and thus $I$ is finitely generated.

We show that $I$ is locally principal. Let $M$ be a maximal ideal of $\Int(D)$: if $M\cap D=(0)$ then $\Int(D)_M$ is a localization of $K[X]$, and thus $I\Int(D)_M$ is principal. If $M\cap D\neq(0)$, then $\Int(D)_M$ contains $\Int(D)S$ for some $S\in\Theta$, and thus $I\Int(D)_M=I\Int(S)\Int(D)_M$. If $S\neq T$, then $I\Int(D)_M=I\Int(S)\Int(D)_M=\Int(D)_M$ is principal. If $S=T$, then $I\Int(T)=J$ is invertible, and thus $I\Int(D)_M$ is principal since $\Int(D)_M$ is a localization of $\Int(T)$. Thus $I$ is locally principal and thus invertible; therefore the direct sum is in the image of $\psi_\Theta$. The claim is proved.
\end{proof}

While very similar to the localization result for Dedekind domains, Theorem \ref{teor:ext-pic-jaffard} includes in its statement the group $\Pic(D,\Theta)$, which may not be easy to calculate. In the next theorem, we trade its presence with the one of the Picard groups $\Pic(T)$; we first show how they are related.

\begin{prop}\label{prop:jaffard-PicDTheta}
Let $\Theta$ be a Jaffard family of $D$. Then, there is an exact sequence
\begin{equation*}
0\longrightarrow\Pic(D,\Theta)\longrightarrow\Pic(D)\longrightarrow\bigoplus_{T\in\Theta}\Pic(T)\longrightarrow 0.
\end{equation*}
\end{prop}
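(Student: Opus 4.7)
The plan is to study the natural extension homomorphism
\begin{equation*}
\phi\colon\Pic(D)\longrightarrow\bigoplus_{T\in\Theta}\Pic(T),\qquad[I]\longmapsto([IT])_{T\in\Theta},
\end{equation*}
and verify three claims: that $\ker\phi=\Pic(D,\Theta)$; that the image of $\phi$ lies inside the direct sum; and that $\phi$ is surjective. The first is immediate from the definition of $\Pic(D,\Theta)$ as the intersection of the kernels of the individual maps $\psi_{D,T}$. For the second, I would represent $[I]$ by an integral invertible ideal $I\subseteq D$ and pick any nonzero $a\in I$: by local finiteness of $\Theta$, $aT=T$ for all but finitely many $T\in\Theta$, and then $T=aT\subseteq IT\subseteq T$ gives $IT=T$ (in particular $[IT]=0$) for almost every $T$.

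The substantive step is surjectivity, which I would handle by mirroring the analogous argument in the proof of Theorem~\ref{teor:ext-pic-jaffard}, dropping ``$\Int$'' throughout. Being a group homomorphism, $\phi$ is surjective onto the direct sum if and only if it hits every single-coordinate class: for each $T_0\in\Theta$ and each $[J]\in\Pic(T_0)$, I must produce $[I]\in\Pic(D)$ with $[IT_0]=[J]$ and $[IT]=[T]$ for $T\neq T_0$. The trivial class is witnessed by $I=D$, so we may assume $[J]\ne 0$. Representing $[J]$ by an integral invertible $T_0$-ideal $J\subsetneq T_0$, I propose $I:=J\cap D$ and split the verification into $(i)$ $IT_0=J$, $(ii)$ $IT=T$ for $T\neq T_0$, and $(iii)$ $I$ is invertible in $D$. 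Point $(i)$ is a local-at-each-maximal check: since $T_0$ is flat over $D$, every maximal ideal $M$ of $T_0$ localizes to $T_{0,M}=D_{M\cap D}$, and the correspondence of ideals under this localization forces $(J\cap D)T_0=J$. Point $(ii)$ rests on the key Jaffard fact that every nonzero prime of $D$ extends nontrivially to a \emph{unique} member of $\Theta$: any prime $P\supseteq I$ must extend nontrivially to $T_0$ (since $IT_0\subseteq J\subsetneq T_0$), hence trivially to every other $T$, so $IT$ is contained in no maximal ideal of $T$ and must equal $T$. Point $(iii)$ follows from local principality of $I$ at every prime of $D$ (each such prime, via the unique corresponding $T\in\Theta$, reduces $I$ to either $J$ or $T$, both of which are locally principal there), combined with finite generation of $I$, extracted from a finite generating set of $J$ after clearing denominators within the Jaffard structure.

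The main obstacle will be polishing the verifications of $(ii)$ at primes $P$ of $D$ with $PT_0=T_0$ and of the finite generation in $(iii)$: both of these rely on the full Jaffard hypothesis (independence and local finiteness together with completeness), specifically on the partition of nonzero primes of $D$ among the members of $\Theta$ and on the resulting control over how ideals extend and contract. Should the direct construction prove unwieldy, a clean alternative is a snake-lemma argument comparing the short exact sequence
\begin{equation*}
0\to\Pic(D)\to\Pic(\Int(D))\to\picpol(D)\to 0
\end{equation*}
with its direct-sum analog over $\Theta$, invoking Theorem~\ref{teor:ext-pic-jaffard} for the middle column; this reduces the surjectivity of $\phi$ to the triviality of the kernel of the induced map $\picpol(D)\to\bigoplus_{T\in\Theta}\picpol(T)$, which can be addressed by a separate argument.
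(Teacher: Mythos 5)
Your overall plan is workable but it is a genuinely different route from the paper's: the paper settles the proposition in two lines by quoting the isomorphism $\Inv(D)\simeq\bigoplus_{T\in\Theta}\Inv(T)$ from \cite[Proposition 7.1]{starloc} and passing to the quotient by principal ideals, whereas you re-run the direct construction of Theorem \ref{teor:ext-pic-jaffard} with $\Int$ stripped out. Your kernel identification and the verification that the image lands in the direct sum are fine. The surjectivity step, however, has two genuine gaps. First, your justification of $(ii)$ is a non sequitur: from $IT_0\subseteq J\subsetneq T_0$ you may conclude that \emph{some} prime containing $I=J\cap D$ survives in $T_0$, not that \emph{every} prime containing $I$ does; a priori $J\cap D$ could lie in a prime surviving in another member of $\Theta$, and excluding this is precisely where the Jaffard structure must be used. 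A correct argument runs as follows: if $P$ is a prime of $D$ with $PS\neq S$ for some $S\in\Theta$, $S\neq T_0$, then $S\subseteq D_P$ by flatness and $T_0S=K$ by independence, so $JD_P$ is a nonzero module over the ring $T_0D_P=K$, whence $JD_P=K$; writing $1=\sum_k j_k a_k/s$ with $j_k\in J$, $a_k\in D$ and a common denominator $s\in D\setminus P$ gives $s=\sum_k j_ka_k\in(J\cap D)\setminus P$. Applying this to $P=N\cap D$ for each maximal ideal $N$ of $T$ yields $(J\cap D)T=T$ for all $T\neq T_0$.

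Second, in $(iii)$ local principality alone does not give invertibility; you must actually exhibit a finitely generated ideal, and ``clearing denominators within the Jaffard structure'' is a gesture, not an argument. This is where completeness and local finiteness have to enter, exactly as in the proof of Theorem \ref{teor:ext-pic-jaffard}: choose $y_1,\ldots,y_m\in J\cap D$ with $(y_1,\ldots,y_m)T_0=J$ (possible since $(J\cap D)T_0=J$ and $J$ is finitely generated), fix a nonzero $d$ among them, use local finiteness to list the finitely many $S\neq T_0$ in which $d$ is a nonunit and adjoin finitely many elements of $J\cap D$ trivializing those components (possible by $(ii)$); completeness, i.e. $A=\bigcap_{T\in\Theta}AT$, then forces this finitely generated subideal to equal $J\cap D$, which is therefore invertible. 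Alternatively you can simply cite \cite[Proposition 7.1]{starloc}, which is the paper's proof. Finally, be wary of your fallback: reducing surjectivity of $\Pic(D)\to\bigoplus_T\Pic(T)$ to the vanishing of $\ker\bigl(\picpol(D)\to\bigoplus_T\picpol(T)\bigr)$ risks circularity, since in this paper that vanishing comes from Theorem \ref{teor:ext-jaff-picpol}, whose proof uses the present proposition.
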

\begin{proof}
By \cite[Proposition 7.1]{starloc}, the extension map
\begin{equation*}
\begin{aligned}
\Gamma\colon\Inv(D) & \longrightarrow\bigoplus_{T\in\Theta}\Inv(T),\\
I & \longmapsto IT
\end{aligned}
\end{equation*}
is an isomorphism. Since every principal ideal of $D$ becomes principal in each $T$, $\Gamma$ induces a surjective map $\Gamma':\Pic(D)\longrightarrow\bigoplus\{\Pic(T)\mid T\in\Theta\}$, whose kernel by definition is exactly $\Pic(D,\Theta)$. The claim is proved.
\end{proof}

\begin{defin}
Let $D$ be an integral domain, and let $\iota_D:\Pic(D)\longrightarrow\Pic(\Int(D))$, $I\mapsto I\Int(D)$, be the canonical extension map. We define the \emph{int-polynomial Picard group of $D$} as the quotient
\begin{equation*}
\picpol(D):=\frac{\Pic(\Int(D))}{\iota_D(\Pic(D))}.
\end{equation*}
If $T$ is a flat overring of $D$, we also define the \emph{int-polynomial Picard group of $(D,T)$ as}
\begin{equation*}
\picpol(D,T):=\frac{\Pic(\Int(D)T)}{\iota_{D,T}(\Pic(T))},
\end{equation*}
where $\iota_{D,T}:\Pic(T)\longrightarrow\Pic(\Int(D)T)$ is the extension map.
\end{defin}

Note that, when $D$ is a local ring, $\Pic(D)=(0)$, and thus $\picpol(D)=\Pic(\Int(D))$.

\begin{teor}\label{teor:ext-jaff-picpol}
Let $\Theta$ be a Jaffard family of $D$. Then, there is an exact sequence
\begin{equation*}
0\longrightarrow\Pic(D)\longrightarrow\Pic(\Int(D))\longrightarrow\bigoplus_{T\in\Theta}\picpol(T)\longrightarrow 0.
\end{equation*}
In particular,
\begin{equation*}
\picpol(D)\simeq\bigoplus_{T\in\Theta}\picpol(T).
\end{equation*}
\end{teor}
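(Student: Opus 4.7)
The plan is to splice the two short exact sequences already established---the one of Theorem \ref{teor:ext-pic-jaffard} and the one of Proposition \ref{prop:jaffard-PicDTheta}---via the snake lemma, using the canonical extension maps $\iota_T\colon\Pic(T)\to\Pic(\Int(T))$ as vertical arrows. Since $\Theta$ is a Jaffard family, Proposition \ref{prop:localizz}\ref{prop:localizz:jaff} yields $\Int(D)T=\Int(T)$ for every $T\in\Theta$; in particular $\Int(D)\subseteq\Int(T)$, and $\coker\iota_T$ coincides with $\picpol(T)$.

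Concretely, I would assemble the commutative diagram
\begin{equation*}
\begin{tikzcd}
0\arrow[r] & \Pic(D,\Theta)\arrow[r]\arrow[d,equal] & \Pic(D)\arrow[r]\arrow[d,"\iota_D"] & \bigoplus_{T\in\Theta}\Pic(T)\arrow[r]\arrow[d,"\oplus\iota_T"] & 0\\
0\arrow[r] & \Pic(D,\Theta)\arrow[r] & \Pic(\Int(D))\arrow[r] & \bigoplus_{T\in\Theta}\Pic(\Int(T))\arrow[r] & 0
\end{tikzcd}
\end{equation*}
whose rows are the exact sequences of Proposition \ref{prop:jaffard-PicDTheta} and Theorem \ref{teor:ext-pic-jaffard}. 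Commutativity of the middle square is functoriality of $\Pic$ applied to the equality of composite ring homomorphisms $(D\to T\to\Int(T))=(D\to\Int(D)\to\Int(T))$; the left square is trivial. Next I would show that $\iota_D$ is injective: for any $a\in D$, the evaluation map $\mathrm{ev}_a\colon\Int(D)\to D$ is a ring retraction of the inclusion $D\hookrightarrow\Int(D)$, and functoriality of $\Pic$ promotes this to a retraction of $\iota_D$, so $\iota_D$ is split injective (and the same argument works for each $\iota_T$).

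Since the leftmost vertical arrow is the identity, the six-term snake sequence collapses to the isomorphisms
\begin{equation*}
\ker\iota_D\;\simeq\;\bigoplus_{T\in\Theta}\ker\iota_T\qquad\text{and}\qquad\coker\iota_D\;\simeq\;\bigoplus_{T\in\Theta}\coker\iota_T.
\end{equation*}
The first is trivially $0=0$ by injectivity of $\iota_D$, and the second is precisely $\picpol(D)\simeq\bigoplus_{T\in\Theta}\picpol(T)$, giving the ``in particular'' clause. Substituting this into the tautological short exact sequence
\begin{equation*}
0\longrightarrow\iota_D(\Pic(D))\longrightarrow\Pic(\Int(D))\longrightarrow\picpol(D)\longrightarrow 0
\end{equation*}
and identifying $\iota_D(\Pic(D))$ with $\Pic(D)$ via the injectivity of $\iota_D$ produces the desired exact sequence. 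The only step requiring any thought is commutativity of the middle square, and this is immediate once the inclusion $\Int(D)\subseteq\Int(T)$ is in hand; everything else is pure diagram chasing.
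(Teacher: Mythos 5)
Your proof is correct and follows essentially the same route as the paper: the same commutative diagram built from Proposition \ref{prop:jaffard-PicDTheta} and Theorem \ref{teor:ext-pic-jaffard}, collapsed by the snake lemma (using that the leftmost vertical map is the identity) to the isomorphism of cokernels, and then substituted into the tautological sequence $0\to\Pic(D)\to\Pic(\Int(D))\to\picpol(D)\to 0$. The only addition is your evaluation-retraction argument for the injectivity of $\iota_D$ and the $\iota_T$, which the paper asserts without proof and which is a valid (and nice) justification.
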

\begin{proof}
Consider the commutative diagram
\begin{equation*}
\begin{tikzcd}
0\arrow[r] & \Pic(D,\Theta) \arrow[r]\arrow[d,equal] & \Pic(D) \arrow[r]\arrow[d,"\iota_D"] & \displaystyle{\bigoplus_{T\in\Theta}\Pic(T)}\arrow[d,"\iota_\Theta"] \arrow[r] & 0\\
0\arrow[r] & \Pic(D,\Theta) \arrow[r] & \Pic(\Int(D))\arrow[r] & \displaystyle{\bigoplus_{T\in\Theta}\Pic(\Int(T))}\arrow[r] & 0\\
\end{tikzcd}
\end{equation*}

The first row is exact by Proposition \ref{prop:jaffard-PicDTheta}, while the second one from Theorem \ref{teor:ext-pic-jaffard}; on the other hand, the leftmost vertical map is the identity and the other two vertical maps are injective. By the snake lemma, there is an exact sequence $0\longrightarrow\coker \iota_D\longrightarrow\coker \iota_\Theta\longrightarrow 0$. By definition, $\coker\iota_D$ is just $\picpol(D)$, while $\coker\iota_\Theta$ is the direct sum $\bigoplus\picpol(T)$, and thus we have the isomorphism. The sequence (which is exact by definition)
\begin{equation*}
0\longrightarrow\Pic(D)\longrightarrow\Pic(\Int(D))\longrightarrow\picpol(D)\longrightarrow 0
\end{equation*}
then becomes the one in the statement by substituting $\picpol(D)$ with the direct sum.
\end{proof}

A domain is \emph{$h$-local} if every nonzero ideal is contained in only finitely many maximal ideals and every nonzero prime ideal is contained in only one maximal ideal. The previous theorems immediately give the following.

\begin{cor}
Let $D$ be an integral domain such that one of the following conditions holds.
\begin{enumerate}[(a)]
\item $D$ is $h$-local;
\item $D$ is one-dimensional and locally finite;
\item $D$ is a one-dimensional Noetherian domain.
\end{enumerate}
Then, there is an exact sequence
\begin{equation*}
0\longrightarrow\Pic(D)\longrightarrow\Pic(\Int(D))\longrightarrow\bigoplus_{M\in\Max(D)}\Pic(\Int(D_M))\longrightarrow 0.
\end{equation*}
In particular, $\displaystyle{\picpol(D)\simeq\bigoplus_{M\in\Max(D)}\picpol(D_M)}\simeq\bigoplus_{M\in\Max(D)}\Pic(\Int(D_M))$.
\end{cor}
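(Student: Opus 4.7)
The plan is to apply Theorem~\ref{teor:ext-jaff-picpol} to the family $\Theta := \{D_M \mid M \in \Max(D)\}$ in each of the three cases, and then to use the fact that each $D_M$ is local (so $\Pic(D_M) = 0$, whence $\picpol(D_M) = \Pic(\Int(D_M))$) to rewrite the resulting exact sequence in the form stated.

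The only nontrivial step is verifying that $\Theta$ is a Jaffard family of $D$ under each of the hypotheses~(a)--(c). Flatness of each $D_M$ is automatic, and the equality $D = \bigcap_M D_M$ holds for any domain, giving completeness. For independence we need $D_M D_N = K$ whenever $M \neq N$; since $D_M D_N$ is a flat overring, its spectrum consists of the primes of $D$ contained in both $M$ and $N$. In case~(a) the $h$-local hypothesis forces any such prime to be zero (nonzero primes live in a unique maximal), and in cases~(b) and~(c) the only prime below two distinct maximals is $(0)$ because $\dim D = 1$. Local finiteness is part of the hypothesis in (a) (it is one of the defining conditions of $h$-locality) and in~(b), and in~(c) it follows from the standard fact that a nonzero element of a one-dimensional Noetherian domain lies in only finitely many maximals. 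Compactness of $\Theta$ in the Zariski topology then follows from local finiteness, as recalled in Section~\ref{sect:prelim:jaff}.

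Once $\Theta$ is established to be a Jaffard family, Theorem~\ref{teor:ext-jaff-picpol} yields the exact sequence
\begin{equation*}
0 \longrightarrow \Pic(D) \longrightarrow \Pic(\Int(D)) \longrightarrow \bigoplus_{M \in \Max(D)} \picpol(D_M) \longrightarrow 0
\end{equation*}
together with the isomorphism $\picpol(D) \simeq \bigoplus_M \picpol(D_M)$. Since each $D_M$ is local we have $\Pic(D_M) = 0$, so $\picpol(D_M) = \Pic(\Int(D_M))$; substituting converts the displayed sequence and isomorphism into the ones in the statement.

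The main obstacle, if any, is the verification that $\{D_M\}$ is Jaffard in case~(a), which amounts to unpacking the overring-theoretic content of $h$-locality; the other two cases are immediate from the one-dimensional hypothesis together with local finiteness.
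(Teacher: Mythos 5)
Your proposal is correct and follows essentially the same route as the paper: show that $\Theta=\{D_M\mid M\in\Max(D)\}$ is a Jaffard family and apply Theorem~\ref{teor:ext-jaff-picpol}, using $\Pic(D_M)=0$ to identify $\picpol(D_M)$ with $\Pic(\Int(D_M))$. The only cosmetic difference is that the paper first reduces cases (b) and (c) to the $h$-local case and quotes the known fact that $h$-locality makes $\Theta$ a Jaffard family, whereas you verify the defining conditions directly in each case.
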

\begin{proof}
We first note that, if $D$ is one-dimensional and locally finite, then $D$ is $h$-local; likewise, if $D$ is one-dimensional and Noetherian, then it is locally finite. Hence it is enough to prove the claim for $D$ $h$-local.

If $D$ is $h$-local, $\Theta:=\{D_M\mid M\in\Max(D)\}$ is a Jaffard family, and thus the claim follows either from Theorem \ref{teor:ext-pic-jaffard} (since $\Pic(D,\Theta)=\Pic(D)$) or by Theorem \ref{teor:ext-jaff-picpol} (since $\picpol(D_M)=\Pic(\Int(D_M))$ as $D_M$ is local).
\end{proof}

\begin{prop}
Let $D$ be a locally finite Pr\"ufer domain. Then, there is a split exact sequence
\begin{equation*}
0\longrightarrow\Pic(D)\longrightarrow\Pic(\Int(D))\longrightarrow\bigoplus_{\substack{M\in\Max(D)\\ h(M)=1}}\Pic(\Int(D_M))\longrightarrow 0.
\end{equation*}
In particular,
\begin{equation*}
\Pic(\Int(D))\simeq\Pic(D)\oplus\bigoplus_{\substack{M\in\Max(D)\\ h(M)=1}}\Pic(\Int(D_M))
\end{equation*}
\end{prop}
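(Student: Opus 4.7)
Our strategy is to exhibit a Jaffard family of $D$ consisting of localizations at the height-one maximal ideals together with a single ``higher-height'' overring, apply Theorem~\ref{teor:ext-jaff-picpol}, show that the higher-height overring contributes trivially, and obtain the splitting from evaluation at $0$.

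Set $T_\infty:=\bigcap\{D_N\mid N\in\Max(D),\ h(N)\geq 2\}$, with the convention $T_\infty:=K$ if no such $N$ exists, and define
\[
\Theta:=\{D_M\mid M\in\Max(D),\ h(M)=1\}\cup\{T_\infty\}
\]
(omitting $T_\infty$ when it equals $K$). We first verify that $\Theta$ is a Jaffard family of $D$. Flatness and local finiteness are inherited from $D$, and completeness reduces to the general identity $I=\bigcap_{M\in\Max(D)}ID_M$, valid in any integral domain. For independence, the crucial observation is that, given a height-one maximal $M$ and any other $M'\in\Max(D)$, no nonzero prime of $D$ is contained in both $M$ and $M'$ (the only prime nontrivially contained in $M$ is $M$ itself, and $M\not\subseteq M'$ by distinctness and maximality); thus $D_MD_{M'}=K$, and intersecting over the higher-height maximals yields $D_MT_\infty=K$.

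Next we prove $\picpol(T_\infty)=0$. As an intersection of flat overrings of the Pr\"ufer domain $D$, $T_\infty$ is itself Pr\"ufer, hence seminormal. Its maximal ideals are the extensions $NT_\infty$ for $N\in\Max(D)$ with $h(N)\geq 2$, and the associated localization is $D_N$, a valuation domain of rank $\geq 2$. By \cite[Proposition I.3.16]{intD} we have $\Int(D_N)=D_N[X]$, and Theorem~\ref{teor:loc} combined with the Pr\"ufer property (so that conductors commute with localization at each $D_N$) gives $\Int(T_\infty)D_N=\Int(D_N)=D_N[X]$. Consequently,
\[
\Int(T_\infty)\subseteq\bigcap_N\Int(T_\infty)D_N=\bigcap_N D_N[X]=T_\infty[X]\subseteq\Int(T_\infty),
\]
so $\Int(T_\infty)=T_\infty[X]$. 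Traverso's theorem on the Picard group of a polynomial ring over a seminormal domain then yields $\Pic(\Int(T_\infty))=\Pic(T_\infty)$, whence $\picpol(T_\infty)=0$.

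Applying Theorem~\ref{teor:ext-jaff-picpol} to $\Theta$, and using $\picpol(D_M)=\Pic(\Int(D_M))$ (since $D_M$ is local) together with the previous vanishing, we obtain
\[
\picpol(D)\cong\bigoplus_{T\in\Theta}\picpol(T)\cong\bigoplus_{\substack{M\in\Max(D)\\ h(M)=1}}\Pic(\Int(D_M)).
\]
Finally, the evaluation homomorphism $\epsilon_0\colon\Int(D)\to D$, $f\mapsto f(0)$, is a ring retract of the inclusion $D\hookrightarrow\Int(D)$, so the induced map on Picard groups is a retraction of $\iota_D$. Hence the short exact sequence $0\to\Pic(D)\to\Pic(\Int(D))\to\picpol(D)\to 0$ splits, and substituting the isomorphism above gives the split exact sequence of the statement. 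The main obstacle is the second step, where one must correctly combine Chabert's classification, the Pr\"ufer-style localization of $\Int$, and Traverso's seminormality theorem in order to conclude $\picpol(T_\infty)=0$.
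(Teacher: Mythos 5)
Your overall architecture coincides with the paper's: the same Jaffard family $\{D_M\mid h(M)=1\}\cup\{T_\infty\}$, the same vanishing $\picpol(T_\infty)=0$ obtained from $\Int(T_\infty)=T_\infty[X]$ together with the invariance of the Picard group under polynomial extension for seminormal (here integrally closed) rings, and the same appeal to Theorem \ref{teor:ext-jaff-picpol}. The one genuine gap is your justification of $\Int(T_\infty)D_N=\Int(D_N)$: you invoke Theorem \ref{teor:loc} ``combined with the Pr\"ufer property (so that conductors commute with localization).'' The conductor appearing in Theorem \ref{teor:loc} is $(T_\infty:f(T_\infty))$, and $f(T_\infty)T_\infty$ is in general not finitely generated, so Pr\"uferness gives no control: conductors commute with flat extension only for finitely generated modules or under the Jaffard/$t$-Jaffard/Noetherian/Mori-type hypotheses of Proposition \ref{prop:localizz}. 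Indeed, Example \ref{ex:weakJaff} is exactly a one-dimensional Pr\"ufer domain with a flat overring (even a localization at a maximal ideal) at which $\Int$ fails to localize, so ``Pr\"ufer $\Rightarrow$ conductors commute with localization'' cannot be the reason. The step is easily repaired, because you only need the containment $\Int(T_\infty)\subseteq D_N[X]$: by the (unnumbered) proposition of Section \ref{sect:loc} one has $f(T_\infty)D_N=f(D_N)D_N$, so $f(T_\infty)\subseteq T_\infty$ forces $f(D_N)\subseteq f(T_\infty)D_N\subseteq D_N$, i.e.\ $f\in\Int(D_N)=D_N[X]$, and intersecting over $N$ gives $\Int(T_\infty)=T_\infty[X]$; this is in substance how the paper argues, via $\Int((T_\infty)_P)=(T_\infty)_P[X]$ for every maximal ideal $P$ of $T_\infty$. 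A smaller point: your claim $D_MT_\infty=K$ ``by intersecting over the higher-height maximals'' silently moves the product inside an infinite intersection; the paper justifies this with the compactness result of \cite{compact-intersections}, and you should either do the same or use that every overring of a Pr\"ufer domain is flat, so that $D_MT_\infty$ is determined by the primes surviving in both factors (of which there are none).

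Your splitting argument, on the other hand, is a genuinely different and valid route. The evaluation $\epsilon_0\colon\Int(D)\to D$, $f\mapsto f(0)$, is a ring retraction of the inclusion $D\hookrightarrow\Int(D)$, so by functoriality of $\Pic$ the injection $\Pic(D)\to\Pic(\Int(D))$ is split for \emph{any} domain, and the short exact sequence splits without any information on the quotient. The paper instead proves that each $\Pic(\Int(D_M))$ is free (trivial when $D_M$ is a nondiscrete rank-one valuation ring, free by \cite[Proposition 7.7]{chabert-pic-intV} when $D_M$ is a DVR) and splits the sequence off the freeness of the right-hand term. Your argument is more elementary and more general; the paper's yields the additional information that the complement $\bigoplus_{h(M)=1}\Pic(\Int(D_M))$ is a free group.
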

\begin{proof}
Let $T:=\bigcap\{D_M\mid M\in\Max(D),h(M)>1\}$, and let $\Theta:=\{D_M\mid M\in\Max(D),h(M)=1\}\cup\{T\}$. Then, $\Theta$ is complete, locally finite, and each of its elements is flat over $D$. Moreover, $D_MD_N=K$ if $M,N$ have height $1$, while
\begin{equation*}
TD_N=\left(\bigcap_{\substack{M\in\Max(D)\\ h(M)>1}}D_M\right)D_N=\bigcap_{\substack{M\in\Max(D)\\ h(M)>1}}D_MD_N=K
\end{equation*}
since each subset of $\Max(D)$ is compact. Hence, $\Theta$ is independent and thus a Jaffard family. By Theorem \ref{teor:ext-pic-jaffard} there is an exact sequence
\begin{equation*}
0\longrightarrow\Pic(D)\longrightarrow\Pic(\Int(D))\longrightarrow\picpol(T)\oplus\bigoplus_{\substack{M\in\Max(D)\\ h(M)=1}}\picpol(D_M)\longrightarrow 0.
\end{equation*}
Each $\Pic(D_M)$ is trivial since $D_M$ is local. We claim that $\Pic(\Int(T))=\Pic(T)$.

Let $P$ be a maximal ideal of $T$. Then, $T_P$ is a valuation domain of dimension strictly greater than $1$, and thus by \cite[Proposition I.3.16]{intD} we have $\Int(T_P)=T_P[X]$; hence also $\Int(T)=T[X]$. Since $T$ is integrally closed, the natural map $\Pic(T)\longrightarrow\Pic(T[X])$ is an isomorphism \cite[Corollary 6.1.5]{fontana_libro}, and thus the quotient $\picpol(T)=\frac{\Pic(\Int(T))}{\Pic(T)}$ is trivial. Hence, the sequence in the statement is exact.

To show that it is split, it is enough to note that $\Pic(\Int(D_M))$ is always a free group (if $D_M$ is not discrete since in that case $\Pic(\Int(D_M))=\Pic(D_M[X])=(0)$, if $D_M$ is discrete by \cite[Proposition 7.7]{chabert-pic-intV}). The isomorphism follows.
\end{proof}

\section{The surjectivity of the extension map}
A consequence of Theorem \ref{teor:ext-pic-jaffard} (or rather, of its proof) is that when $T$ is a Jaffard overring then the extension map $\Pic(\Int(D))\longrightarrow\Pic(\Int(D)T)=\Pic(\Int(T))$ is surjective. This property is in general not true, not even for an extension map $\Pic(D)\longrightarrow\Pic(T)$ where $D\subseteq T$ is a flat extension: $D$ may be a local ring (so $\Pic(D)$ is trivial), while the Picard group of a flat overring may not be trivial. Moreover, even if the surjectivity hold, it need not to pass to integer-valued polynomials: we will give in Example \ref{ex:weakJaff} below an example where $\Pic(D)\longrightarrow\Pic(T)$ is surjective, while $\Pic(\Int(D))\longrightarrow\Pic(\Int(T))$ is not. In this section, we collect some sufficient conditions for this surjectivity to hold, which will be useful later, as well as a direct application to the calculation of $\Pic(\Int(D))$ for one-dimensional domains.

\begin{lemma}\label{lemma:sublattice-surj-Pic}
Let $D$ be an integral domain, $T$ a flat overring of $D$, and let $\mathcal{L}$ be a sublattice of $\Over(D)$ such that $\bigcup\{S\mid S\in\mathcal{L}\}=T$. If the extension map $\Pic(D)\longrightarrow\Pic(S)$ is surjective for every $S\in\mathcal{L}$, then the extension map $\Pic(D)\longrightarrow\Pic(T)$ is surjective.
\end{lemma}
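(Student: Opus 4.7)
The plan is to reduce the problem from $T$ to an element $S \in \mathcal{L}$ by exploiting the finite generation that follows from invertibility. Fix a class $[J] \in \Pic(T)$; we may represent it by an invertible fractional ideal $J = (a_1, \ldots, a_n)T$, with inverse $J^{-1} = (b_1, \ldots, b_m)T$. The equality $JJ^{-1} = T$ yields a relation $1 = \sum_{i,j} c_{ij} a_i b_j$ for finitely many elements $c_{ij} \in T$. Only finitely many elements of $T$ are involved in this witness, so using that $T = \bigcup_{S\in\mathcal{L}} S$ and that $\mathcal{L}$ is closed under the join operation of $\Over(D)$ (the compositum of two overrings), I can find a single $S \in \mathcal{L}$ containing $a_1, \ldots, a_n, b_1, \ldots, b_m$ and all the $c_{ij}$.

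The next step is to produce a well-behaved $S$-ideal refining $J$. I set $J_S := (a_1, \ldots, a_n)S$ and $H_S := (b_1, \ldots, b_m)S$. Since $S$ is a ring, all products $a_i b_j$ lie in $S$; together with $c_{ij} \in S$, the relation $\sum c_{ij} a_i b_j = 1$ shows that $1 \in J_S H_S \subseteq S$, so $J_S H_S = S$. Hence $J_S$ is an invertible fractional ideal of $S$. Extending back to $T$ trivially gives $J_S T = (a_1,\ldots,a_n) T = J$.

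Now I invoke the hypothesis: the extension map $\Pic(D) \to \Pic(S)$ is surjective, so there is an invertible fractional ideal $I$ of $D$ and an element $c \in K^*$ with $J_S = c\,IS$. Extending this equality to $T$ yields $J = J_S T = c\, IST = c\, IT$, so $[J] = [IT]$ in $\Pic(T)$, i.e.\ $[J]$ lies in the image of the extension map $\Pic(D) \to \Pic(T)$. Since $[J]$ was arbitrary, this completes the argument.

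The only nontrivial point is the finitary reduction from $T$ to a single $S \in \mathcal{L}$; once that is in place, invertibility of $J_S$ and the passage back to $T$ are formal. The sublattice hypothesis on $\mathcal{L}$ is used precisely to guarantee that finitely many elements of $\bigcup\mathcal{L}$ can be captured inside one member of $\mathcal{L}$, which is the real content of the lemma.
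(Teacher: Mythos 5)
Your overall strategy is the one the paper uses: capture the finitely many elements witnessing invertibility inside a single member of $\mathcal{L}$ (via the lattice property), obtain an invertible ideal of that $S$ which extends to $J$, and then push the surjectivity hypothesis from $S$ up to $T$. However, the capturing step as you state it fails. Every $S\in\mathcal{L}$ is contained in $\bigcup\{S'\mid S'\in\mathcal{L}\}=T$, so a member of $\mathcal{L}$ can only absorb elements of $T$. But the generators $b_1,\ldots,b_m$ of $J^{-1}=(T:J)$ do not in general lie in $T$: after normalizing $J$ to be an integral ideal one has $J^{-1}\supseteq T$, and if all its generators lay in $T$ then $J^{-1}=T$, forcing $J=T$; similarly, if $J$ is genuinely fractional the $a_i$ themselves need not lie in $T$. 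So, except for the trivial class, there is no $S\in\mathcal{L}$ containing $a_1,\ldots,a_n,b_1,\ldots,b_m$ and the $c_{ij}$, and your justification that $a_ib_j\in S$ ``because $S$ is a ring and $a_i,b_j\in S$'' collapses.

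The repair is small, and it is exactly the paper's argument: first replace $J$ by an integral representative of its class, so that $a_i\in T$, and note that the products $a_ib_j$ lie in $T$ (since $JJ^{-1}=T$). It then suffices to choose $S\in\mathcal{L}$ containing the $a_i$, the finitely many products $a_ib_j$, and the $c_{ij}$ --- but not the $b_j$ themselves. With $J_S:=(a_1,\ldots,a_n)S$ and $H_S:=(b_1,\ldots,b_m)S$ one gets $J_SH_S\subseteq S$ because each generator $a_ib_j$ of the product lies in $S$, while $1=\sum_{i,j}c_{ij}a_ib_j\in J_SH_S$; hence $J_SH_S=S$, $J_S$ is invertible, $J_ST=J$, and the remainder of your argument (surjectivity at $S$, then extension to $T$ using $IST=IT$) goes through unchanged.
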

\begin{proof}
We first note that, for every finite subset $A\subseteq T$, there is an $S\in\mathcal{L}$ containing $A$: indeed, each $a\in A$ is contained in some $S_a\in\mathcal{L}$, and since $\mathcal{L}$ is a sublattice of $\Over(D)$, there is an $S\in\mathcal{L}$ containing all $S_a$ and thus all of $A$.

Let $I:=(x_1,\ldots,x_n)$ be an invertible ideal of $T$, and let $J:=(y_1,\ldots,y_m)$ be its inverse. Then, $x_iy_j\in T$ for every $i,j$, and there are $r_{ij}\in T$ such that $1=\sum_{i,j}r_{ij}x_iy_j$. Therefore, there is an $S\in\mathcal{L}$ that contains all $x_i$, all $x_iy_j$ and all $r_{ij}$.

Consider $I_0:=(x_1,\ldots,x_n)S$ and $J_0:=(y_1,\ldots,y_m)S$; then, by construction, $I_0J_0\subseteq S$ and $1\in I_0J_0$. Hence, $I_0J_0=S$, so $I_0$ is invertible in $S$. Clearly $I_0T=I$. By hypothesis, there is an invertible ideal $I_1$ of $D$ such that $[I_1S]=[I_0]$; thus, $[I_1T]=[I_1ST]=[I_0T]=[I]$. It follows that the extension map $\Pic(D)\longrightarrow\Pic(T)$ is surjective, as claimed.
\end{proof}

\begin{lemma}\label{lemma:sublattice-surj-PicInt}
Let $D$ be an integral domain, $T$ a flat overring of $D$, and let $\mathcal{L}$ be a sublattice of $\Over(D)$ such that $\bigcup\{S\mid S\in\Lambda\}=T$. If the extension map $\Pic(\Int(D))\longrightarrow\Pic(\Int(D)S)$ is surjective for every $S\in\mathcal{L}$, then the extension map $\Pic(\Int(D))\longrightarrow\Pic(\Int(D)T)$ is surjective.
\end{lemma}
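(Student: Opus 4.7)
The plan is to mirror the proof of Lemma \ref{lemma:sublattice-surj-Pic}, replacing $D$ and $T$ there by $\Int(D)$ and $\Int(D)T$ here, and reducing the role of the sublattice hypothesis on $\Over(D)$ to a sublattice hypothesis on the family $\{\Int(D)S\mid S\in\mathcal{L}\}$ inside $\Over(\Int(D))$.

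The first step is to observe that $\Int(D)T=\bigcup_{S\in\mathcal{L}}\Int(D)S$, and more generally that any finite subset of $\Int(D)T$ is contained in $\Int(D)S$ for some $S\in\mathcal{L}$. Indeed, every element of $\Int(D)T$ is a finite sum $\sum h_k t_k$ with $h_k\in\Int(D)$ and $t_k\in T$; a finite subset of $\Int(D)T$ thus involves only finitely many elements of $T$, and since $\mathcal{L}$ is a sublattice of $\Over(D)$ whose union is $T$, the argument at the beginning of the proof of Lemma \ref{lemma:sublattice-surj-Pic} produces a single $S\in\mathcal{L}$ containing all the relevant $t_k$.

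The second step mimics the calculation done in the previous lemma. Given an invertible ideal $I$ of $\Int(D)T$ with inverse $J$, pick finite sets of generators $I=(f_1,\ldots,f_n)\Int(D)T$ and $J=(g_1,\ldots,g_m)\Int(D)T$, and write $1=\sum_{i,j}r_{ij}f_ig_j$ with $r_{ij}\in\Int(D)T$. Using step one, choose $S\in\mathcal{L}$ large enough to contain all the $f_i$, all the $g_j$, all the products $f_ig_j$, and all the $r_{ij}$ (that is, all these elements lie in $\Int(D)S$). Setting $I_0:=(f_1,\ldots,f_n)\Int(D)S$ and $J_0:=(g_1,\ldots,g_m)\Int(D)S$, one has $I_0J_0\subseteq\Int(D)S$ and $1\in I_0J_0$, hence $I_0J_0=\Int(D)S$, so $I_0$ is invertible in $\Int(D)S$; moreover $I_0\cdot\Int(D)T=I$ by construction.

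The third step uses the hypothesis: by assumption, the extension map $\Pic(\Int(D))\longrightarrow\Pic(\Int(D)S)$ is surjective, so there is an invertible ideal $I_1$ of $\Int(D)$ with $[I_1\Int(D)S]=[I_0]$ in $\Pic(\Int(D)S)$. Extending further to $\Int(D)T$ gives $[I_1\Int(D)T]=[I_0\Int(D)T]=[I]$, which yields the required surjectivity. I do not expect any genuine obstacle: the only point that has to be checked carefully is that $\Int(D)T$ really is exhausted by the $\Int(D)S$ as $S$ ranges in $\mathcal{L}$, which is what reduces the situation to applying the hypothesis at a single $S$.
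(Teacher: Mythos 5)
Your overall strategy is exactly the paper's: the paper simply observes that $\mathcal{L}_1:=\{\Int(D)S\mid S\in\mathcal{L}\}$ is a sublattice of $\Over(\Int(D))$ whose union is $\Int(D)T$ (using the same finite-sum argument $h=f_1t_1+\cdots+f_nt_n$ that you give) and then cites Lemma \ref{lemma:sublattice-surj-Pic} applied to $\mathcal{L}_1$, whereas you re-run the proof of that lemma inside $\Over(\Int(D))$ instead of citing it. So there is no difference in substance, only in packaging.

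One step of your write-up, however, is stated too strongly and would fail as written: you ask for $S\in\mathcal{L}$ such that \emph{all the $g_j$} lie in $\Int(D)S$. The $g_j$ generate the inverse $J=(\Int(D)T:I)$, so they are only elements of the quotient field $K(X)$ and in general do not belong to $\Int(D)T$ at all (if $I$ is a proper integral ideal, $J\not\subseteq\Int(D)T$); hence no $\Int(D)S\subseteq\Int(D)T$ can contain them, and the exhaustion $\Int(D)T=\bigcup_S\Int(D)S$ gives you no control over them. The fix is the one already built into the proof of Lemma \ref{lemma:sublattice-surj-Pic}: after reducing (by scaling, which does not change the class) to the case where $I$ is integral, you only need $S$ such that $\Int(D)S$ contains the $f_i$, the finitely many products $f_ig_j$, and the $r_{ij}$ — these all lie in $\Int(D)T$, so such an $S$ exists — and this already gives $I_0J_0\subseteq\Int(D)S$ and $1\in I_0J_0$, which is all the argument uses. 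With that adjustment your proof is correct and coincides with the paper's.
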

\begin{proof}
Let $\mathcal{L}_1:=\{\Int(D)S\mid S\in\Lambda\}$: then, $\mathcal{L}_1$ is a sublattice of $\Over(\Int(D))$. We claim that its union is $\Int(D)T$. Indeed, if $h\in\Int(D)T$ then $h=f_1t_1+\cdots+f_nt_n$ for some $f_i\in\Int(D)$, $t_i\in T$; if $S\in\mathcal{L}$ contains $t_1,\ldots,t_n$, then $h\in\Int(D)S$. Hence, we can apply Lemma \ref{lemma:sublattice-surj-Pic} to $\mathcal{L}_1$.
\end{proof}

We shall apply this criteria in Propositions \ref{prop:pic-weakjaff-surj} and \ref{prop:picTalpha-surj} below; we conclude this section by showing that for one-dimensional Pr\"ufer domains we can exclude some maximal ideals with infinite residue field while controlling the change in the Picard group.

\begin{lemma}\label{lemma:Pruf1dim-surj-PicInt}
Let $D$ be a one-dimensional Pr\"ufer domain, and let $T$ be a flat overring of $D$. Then, the extension map $\Pic(\Int(D))\longrightarrow\Pic(\Int(D)T)$ is surjective.
\end{lemma}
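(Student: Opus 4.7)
The plan is to apply Lemma~\ref{lemma:sublattice-surj-PicInt}: it suffices to exhibit a sublattice $\mathcal{L}$ of $\Over(D)$ with $\bigcup_{S\in\mathcal{L}}S=T$ and for which the extension map $\Pic(\Int(D))\to\Pic(\Int(D)S)$ is surjective for every $S\in\mathcal{L}$. I would take $\mathcal{L}$ to be the collection of finitely generated $D$-subalgebras of $T$, i.e.\ overrings of the form $S=D[t_1,\ldots,t_n]$ with $t_i\in T$. Every $t\in T$ lies in $D[t]\in\mathcal{L}$, so the union is $T$; closure under compositum is immediate, and closure under intersection follows from the Pr\"ufer structure of $D$.

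The crucial step is then to prove surjectivity of the extension map for each such $S$. Given an invertible (hence finitely generated) ideal $I=(f_1,\ldots,f_m)\Int(D)S$, I would expand the generators as $f_i=\sum_j g_{ij}h_{ij}$ with $g_{ij}\in\Int(D)$ and $h_{ij}\in S$, then clear denominators by multiplying through by an element $c\in D$ with $ch_{ij}\in D$ for all $i,j$ (which exists since each $h_{ij}\in S\subseteq K$ has denominators in $D$). This yields $cf_i\in\Int(D)$, so the finitely generated ideal $J:=(cf_1,\ldots,cf_m)\Int(D)$ satisfies $J\Int(D)S=cI$, and hence $[J\Int(D)S]=[I]$ in $\Pic(\Int(D)S)$.

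It remains to show that (a modification of) $J$ is invertible in $\Int(D)$, i.e.\ locally principal at every maximal ideal $\mathfrak{m}$. If $\mathfrak{m}\cap D=(0)$, then $\Int(D)_\mathfrak{m}$ is a localization of the PID $K[X]$, so $J\Int(D)_\mathfrak{m}$ is automatically principal. If $\mathfrak{m}\cap D=M\in\Lambda_S$ (i.e.\ $MS\neq S$), flatness gives $\Int(D)S\subseteq\Int(D)_\mathfrak{m}$, so $\Int(D)_\mathfrak{m}$ is a localization of $\Int(D)S$, and $J\Int(D)_\mathfrak{m}$ is principal since $I$ is invertible. For the remaining case $\mathfrak{m}\cap D=M\notin\Lambda_S$, note that $MS=S$ forces $v_M(a_i)<v_M(b_i)$ for some $i$ (writing $t_i=a_i/b_i$), so $b_i\in M$ and hence $c\in M\subseteq\mathfrak{m}$; here $J$ is contained in the maximal ideal, and one modifies $J$ using a unit of $\Int(D)S$ chosen to impose local principality at $\mathfrak{m}$ without altering $[J\Int(D)S]$.

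The main obstacle is controlling the modification at the maximal ideals of $\Int(D)$ lying over $M\notin\Lambda_S$, particularly when infinitely many such primes exist (as can happen for one-dimensional Pr\"ufer domains that are not locally finite). The argument leverages the rank-one valuation structure of each $D_M$ and the Pr\"ufer property of $\Int(D_M)$ (see \cite[Chapter VI]{intD}) to control the local behavior; the compatibility between $\Int(D)D_M$ and $\Int(D_M)$ is subtle in general (cf.\ Proposition~\ref{prop:localizz}), and handling this comparison is the technical heart of the proof.
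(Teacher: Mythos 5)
Your outer framework (exhausting $T$ by a directed family $\mathcal{L}$ of overrings and invoking Lemma~\ref{lemma:sublattice-surj-PicInt}) is exactly the right one, but the proposal has a genuine gap at the crucial inner step: you never actually prove that $\Pic(\Int(D))\to\Pic(\Int(D)S)$ is surjective for the members $S=D[t_1,\ldots,t_n]$ of your family. The clearing-denominators computation only produces a finitely generated ideal $J\subseteq\Int(D)$ with $[J\Int(D)S]=[I]$; the problem is precisely to replace $J$ by an invertible ideal of $\Int(D)$ without changing its class in $\Pic(\Int(D)S)$, and your sketch defers this to ``a modification by a unit of $\Int(D)S$'' at the maximal ideals lying over the (possibly infinitely many) $M$ with $MS=S$, which you yourself label the technical heart. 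That is not a detail one can wave at: controlling local principality simultaneously at infinitely many such primes is exactly the point where local finiteness (or some compactness input) is needed, and if the naive modification worked it would prove surjectivity for flat overrings of rather arbitrary domains, which is false (the section's opening remarks and Example~\ref{ex:weakJaff} show how badly extension maps can behave). Note also that your sketch never uses one-dimensionality in any essential way, whereas the statement does need it.

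The way to close the gap --- and the route the paper takes --- is to choose $\mathcal{L}$ to consist of \emph{Jaffard overrings} of $D$ contained in $T$, for which the needed surjectivity is already available from (the proof of) Theorem~\ref{teor:ext-pic-jaffard}, and then to use the Pr\"ufer hypothesis only to show that every $a\in T$ lies in such an overring: since $(D:_Da)=a^{-1}D\cap D$ is finitely generated, both $V((D:_Da))\cap\Max(D)$ and its complement are Zariski-compact, so by \cite[Proposition 4.8]{jaff-derived} the two corresponding intersections of localizations form a finite, hence Jaffard, family $\{S_1,S_2\}$ with $a\in S_2\subseteq T$. In fact your family of finitely generated subalgebras would work, but only because this conductor/compactness argument shows each $D[t_1,\ldots,t_n]\subseteq T$ is itself a Jaffard overring (it equals the intersection of the $D_M$ with $t_i\in D_M$ for all $i$); the surjectivity for it then comes from the Jaffard-family theorem, not from the denominator-clearing argument you propose. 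As written, your proof is incomplete at its central step.
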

\begin{proof}
Let $\mathcal{L}$ be the family of all Jaffard overrings of $D$ contained in $T$. Then, $\mathcal{L}$ is a sublattice of $\Over(D)$, since the product of two Jaffard overrings is a Jaffard overring, and the extension map $\Pic(\Int(D))\longrightarrow\Pic(\Int(D)S)=\Pic(\Int(S))$ is surjective for all such $S$.

Take $a\in T$, and let $I:=(D:_Da)=a^{-1}D\cap D$. Since $D$ is a Pr\"ufer domain, $I$ is finitely generated; therefore, both the closed set $V(I)$ and the open set $D(I)\cap\Max(D)$ of $\Max(D)$ are compact in the Zariski topology of $\Max(D)$. Let $\Theta_1:=\{P\in\Max(D)\mid P\in V(I)\}$ and $\Theta_2:=\{Q\in\Max(D)\mid Q\in D(I)\cap\Max(D)\}$, and let $S_i:=\bigcap\{D_P\mid P\in\Theta_1\}$. Applying \cite[Propostion 4.8]{jaff-derived} to $\Theta:=\{D_M\mid M\in\Max(D)\}$, we obtain that $\{S_1,S_2\}$ is a pre-Jaffard family of $D$; being finite, it is a Jaffard family, and thus $S_1$ and $S_2$ are Jaffard overrings.

By construction, $a\in D_Q$ for every $Q\in\Theta_2$, and thus $a\in S_2$. Moreover, if $P$ is a maximal ideal of $D$ such that $PT\neq T$, then $a\in D_P$, and thus $P\in\Theta_2$; hence $S_2\subseteq T$. It follows that $S_2\in\mathcal{L}$, and thus $a$ belongs to the union of the elements of $\mathcal{L}$. Since $a$ was arbitrary, $T$ is equal to the union, and we can apply Lemma \ref{lemma:sublattice-surj-PicInt}.
\end{proof}
\begin{prop}
Let $D$ be a one-dimensional Pr\"ufer domain, let $X:=\{M\in\Max(D)\mid \Int(D_M)\neq D_M[X]\}$ and let $T:=\bigcap\{D_M\mid M\in X\}$. Then, there is an exact sequence
\begin{equation*}
0\longrightarrow\Pic(D,T)\longrightarrow\Pic(\Int(D))\longrightarrow\Pic(\Int(D)T)\longrightarrow 0.
\end{equation*}
In particular, $\picpol(D)\simeq\picpol(D,T)$.
\end{prop}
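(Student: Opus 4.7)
The sequence in the statement breaks naturally into two halves. Proposition \ref{prop:prejaff-exact} applied with $\Theta=\{T\}$ immediately supplies exactness at $\Pic(D,T)$ and at $\Pic(\Int(D))$, once one identifies $\Pic(D,\{T\})$ with $\Pic(D,T)$. The only remaining piece is the surjectivity of the extension map $\Pic(\Int(D))\longrightarrow\Pic(\Int(D)T)$; but $T=\bigcap\{D_M\mid M\in X\}$ is an intersection of localizations of the Pr\"ufer domain $D$, hence a flat overring, so this surjectivity is precisely the content of Lemma \ref{lemma:Pruf1dim-surj-PicInt}. Modulo that lemma, the first assertion requires no new work.

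For the ``in particular'' part, my plan is to compare the sequence in the statement with the analogous exact sequence
\begin{equation*}
0\longrightarrow\Pic(D,T)\longrightarrow\Pic(D)\longrightarrow\Pic(T)\longrightarrow 0
\end{equation*}
at the level of Picard groups. Left exactness is built into the definition $\Pic(D,T)=\ker\psi_{D,T}$. For the surjectivity of $\psi_{D,T}\colon\Pic(D)\to\Pic(T)$, I would re-run the proof of Lemma \ref{lemma:Pruf1dim-surj-PicInt} without the $\Int$: the same sublattice $\mathcal{L}$ of Jaffard overrings of $D$ contained in $T$ has union equal to $T$ (this is the Pr\"ufer finiteness argument involving $(D:_Da)$ and the clopen decomposition of $\Max(D)$), and for each $S\in\mathcal{L}$ the extension $\Pic(D)\to\Pic(S)$ is a component of the surjection of Proposition \ref{prop:jaffard-PicDTheta}. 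Lemma \ref{lemma:sublattice-surj-Pic} then gives the desired surjectivity.

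With both exact sequences in hand, I assemble the commutative diagram whose top row is the $\Pic$ sequence above, whose bottom row is the sequence from the statement, and whose vertical maps are the identity on $\Pic(D,T)$, $\iota_D$ on $\Pic(D)$, and $\iota_{D,T}$ on $\Pic(T)$; the latter two are injective (as noted in the proof of Theorem \ref{teor:ext-jaff-picpol}). Applying the snake lemma yields an exact sequence $0\to\coker\iota_D\to\coker\iota_{D,T}\to 0$, which is exactly the isomorphism $\picpol(D)\simeq\picpol(D,T)$.

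The only nontrivial content is the surjectivity of $\pi_{\{T\}}$, and that is already isolated as Lemma \ref{lemma:Pruf1dim-surj-PicInt}; so the main obstacle was really proved earlier, and here it amounts to checking that the same sublattice trick adapts without $\Int$ to feed the snake lemma. Everything else is homological bookkeeping.
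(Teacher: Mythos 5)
Your reduction of the first assertion to Proposition \ref{prop:prejaff-exact} applied with $\Theta=\{T\}$ is exactly where the gap lies. Exactness at $\Pic(\Int(D))$ for the singleton family means that every invertible ideal of $\Int(D)$ that becomes principal in $\Int(D)T$ is, modulo principal ideals, extended from an ideal of $D$; this is not a formal consequence of $T$ being a flat overring. Although Proposition \ref{prop:prejaff-exact} is stated for ``a family of flat overrings'', its proof identifies $\ker\pi_\Theta$ via the map $\star_\Lambda\colon I\mapsto\bigcap\{IS\mid S\in\Lambda\}$, and for this to be a star operation on $\Int(D)$ one needs $\bigcap\{\Int(D)S\mid S\in\Theta\}=\Int(D)$, which holds when $\bigcap\{S\mid S\in\Theta\}=D$ (e.g.\ for a complete family) but for a singleton forces $T=D$. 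The conclusion genuinely fails without such a hypothesis: taking $T=K$ would give $\ker\bigl(\Pic(\Int(D))\to\Pic(K[X])\bigr)=\Pic(\Int(D))$ equal to the image of $\Pic(D)$, i.e.\ $\picpol(D)=0$, which is false already for a DVR with finite residue field. So for your $T$ the kernel identification requires an argument, not a citation, and your remark that ``the main obstacle was really proved earlier'' (namely only surjectivity) is not correct.

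The paper's proof supplies precisely this missing piece: it passes to the closure $\overline{X}$ of $X$ in the inverse topology, so that $\Theta:=\{T\}\cup\{D_N\mid N\in\Max(D)\setminus\overline{X}\}$ is a complete pre-Jaffard family by \cite[Proposition 4.8]{jaff-derived}; it applies Proposition \ref{prop:prejaff-exact} to this family, and then kills the extra factors by noting that $\Int(D)D_N=D_N[X]$ and hence $\Pic(\Int(D)D_N)=(0)$ for $N\notin\overline{X}$, which simultaneously collapses the product to $\Pic(\Int(D)T)$ and identifies $\Pic(D,\Theta)$ with $\Pic(D,T)$. Your handling of the rest is fine and matches the paper: surjectivity is indeed Lemma \ref{lemma:Pruf1dim-surj-PicInt}, and your explicit verification that $\Pic(D)\to\Pic(T)$ is surjective (re-running the sublattice argument without $\Int$ via Lemma \ref{lemma:sublattice-surj-Pic} and Proposition \ref{prop:jaffard-PicDTheta}) before invoking the snake lemma is, if anything, more detailed than the paper at that point; but it does not repair the missing exactness at $\Pic(\Int(D))$.
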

\begin{proof}
Let $\overline{X}$ be the closure of $X$ in $\Max(D)$, with respect to the inverse topology. Then, $\overline{X}$ is a closed set of $\Spec(D)$, with respect to the inverse topology, and thus it is compact in the Zariski topology; hence, also $\{D_M\mid M\in\overline{X}\}$ is compact, since it is homeomorphic to $\overline{X}$. Moreover, $T=\bigcap\{D_P\mid P\in\overline{X}\}$. By \cite[Proposition 4.8]{jaff-derived} $\Theta:=\{T\}\cup\{D_N\mid N\in\Max(D)\setminus\overline{X}\}$ is a pre-Jaffard family of $D$. By Proposition \ref{prop:prejaff-exact}, there is an exact sequence
\begin{equation*}
0\longrightarrow\Pic(D,\Theta)\longrightarrow\Pic(\Int(D))\longrightarrow\Pic(\Int(D)T)\oplus\prod_{N\in\Max(D)\setminus\overline{X}}\Pic(\Int(D)D_N).
\end{equation*}

Let $N\in\Max(D)\setminus\overline{X}$. By definition, $\Int(D_N)=D_N[X]$, and thus
\begin{equation*}
D_N[X]\subseteq\Int(D)D_N\subseteq\Int(D_N)D_N=D_N[X]D_N=D_N[X];
\end{equation*}
hence $\Pic(\Int(D)D_N)=\Pic(D_N[X])=\Pic(D_N)=(0)$ since $D_N$ is local and integrally closed. Hence, the direct product in the previous sequence vanishes. Moreover, $ID_N$ is principal for every invertible ideal $I$ of $D$; hence, $\Pic(D,\Theta\setminus\overline{X})=\Pic(D)$, and $\Pic(D,\Theta)=\Pic(D,T)$. Thus, the exact sequence becomes
\begin{equation*}
0\longrightarrow\Pic(D,T)\longrightarrow\Pic(\Int(D))\longrightarrow\Pic(\Int(D)T).
\end{equation*}
To conclude, we note that the rightmost map of the sequence is the extension map, which is surjective by Lemma \ref{lemma:Pruf1dim-surj-PicInt}. Hence, the sequence of the statement is exact.

To prove the isomorphism, we apply the same method of Theorem \ref{teor:ext-jaff-picpol}: there is a commutative diagram
\begin{equation*}
\begin{tikzcd}
0\arrow[r] & \Pic(D,T)\arrow[d,equal]\arrow[r] & \Pic(D) \arrow[r]\arrow[d] & \Pic(T)\arrow[d]\arrow[r] & 0\\
0\arrow[r] & \Pic(D,T)\arrow[r] & \Pic(\Int(D))\arrow[r] & \Pic(\Int(D)T)\arrow[r] & 0\\
\end{tikzcd}
\end{equation*}
The rows are exact (by definition and by the first part of the proof), while the vertical maps are injective (and the leftmost one is the identity). By the snake lemma, the cokernels of the other two vertical maps are isomorphic; since they are, respectively, $\picpol(D)$ and $\picpol(D,T)$, the claim is proved.
\end{proof}

\section{Weak Jaffard families}\label{sect:weakJaff}
We now start to study how to extend Theorem \ref{teor:ext-pic-jaffard} towards weak Jaffard and pre-Jaffard families. In these cases, we have two problems: first, the equality $\Int(D)T=\Int(T)$ may not hold (see Example \ref{ex:weakJaff} below); second, the cokernel of the map $\Pic(D)\longrightarrow\Pic(\Int(D))$ cannot reduce to the direct sum, and in general it may be difficult to actually determine it inside the direct product of the various $\Pic(\Int(T))$ or $\Pic(\Int(D)T)$. The first problem cannot be resolved with our methods, and, for the most part, we will have to use the equality $\Int(D)T=\Int(T)$ as an additional hypothesis; to solve the second problem, on the other hand, our strategy will be to write the cokernel as the middle element of some \emph{other} exact sequences, using this knowledge to write exact sequences involving the int-polynomial Picard groups.

We study in this section the case of weak Jaffard families, which will then be used as an inductive step in the next section (where we will deal with pre-Jaffard families).

\begin{prop}\label{prop:pic-weakjaff-surj}
Let $D$ be an integral domain and let $\Theta$ be a weak Jaffard family of $D$ pointed at $T$. Then, the extension maps $\Pic(D)\longrightarrow\Pic(T)$ and $\Pic(\Int(D))\longrightarrow\Pic(\Int(D)T)$ are surjective.
\end{prop}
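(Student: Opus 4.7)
The plan is to apply Lemmas \ref{lemma:sublattice-surj-Pic} and \ref{lemma:sublattice-surj-PicInt} with $\mathcal{L}$ equal to the collection of Jaffard overrings of $D$ that are contained in $T$. For every $S\in\mathcal{L}$ the extension maps $\Pic(D)\to\Pic(S)$ and $\Pic(\Int(D))\to\Pic(\Int(D)S)=\Pic(\Int(S))$ are already known to be surjective, by Proposition \ref{prop:jaffard-PicDTheta} and Theorem \ref{teor:ext-pic-jaffard} respectively (using also that $\Int(D)S=\Int(S)$ by Proposition \ref{prop:localizz}\ref{prop:localizz:jaff}). Moreover, $\mathcal{L}$ is a sublattice of $\Over(D)$, since the compositum of two Jaffard overrings of $D$ is again a Jaffard overring. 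It therefore remains to verify that $\bigcup_{A\in\mathcal{L}}A=T$, i.e., that every $a\in T$ belongs to some Jaffard overring of $D$ contained in $T$.

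Fix $a\in T$ and set $I:=(D:_Da)$; since $a\in T$ we have $IT=T$, and writing $1=\sum_{i=1}^n c_it_i$ with $c_i\in I$ and $t_i\in T$, the finitely generated subideal $J:=(c_1,\ldots,c_n)\subseteq I$ still satisfies $JT=T$. Define
\[
Z:=\{S\in\Theta\mid JS\neq S\}.
\]
Because $J$ is finitely generated, the complementary set $\{S\in\Over(D)\mid JS=S\}$ is the Zariski-open union of the basic opens $B(s_1,\ldots,s_n)$ taken over all $K$-solutions of $\sum c_is_i=1$; hence $Z$ is Zariski-closed in $\Theta$, in particular Zariski-compact, and since $JT=T$ we have $T\notin Z$, so that $Z\subseteq\Theta\setminus\{T\}$ consists entirely of Jaffard overrings of $D$.

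The key step is then to split $\Theta$ along $Z$, exactly as in the proof of Lemma \ref{lemma:Pruf1dim-surj-PicInt}: by \cite[Proposition 4.8]{jaff-derived} the two-element family $\{A,B\}$ with $A:=\bigcap_{S\in\Theta\setminus Z}S$ and $B:=\bigcap_{S\in Z}S$ is a pre-Jaffard family of $D$, and being finite it is automatically locally finite, so Jaffard; in particular $A$ is a Jaffard overring of $D$. Since $T\in\Theta\setminus Z$ we have $A\subseteq T$, and for each $S\in\Theta\setminus Z$ the equality $JS=S$ lets us write $1=\sum c_is_i$ with $s_i\in S$, whence $a=\sum(ac_i)s_i\in S$ (because $ac_i\in D\subseteq S$, as $c_i\in I=(D:_Da)$); thus $a\in A$, so $A\in\mathcal{L}$ contains $a$, as required. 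The main obstacle throughout is the invocation of \cite[Proposition 4.8]{jaff-derived}, which requires suitable compactness of both sides of the partition $\Theta=Z\sqcup(\Theta\setminus Z)$; compactness of $Z$ is immediate from the finite generation of $J$, and it is precisely this requirement on the complementary side that forces the passage from $I$ to its finitely generated subideal $J$.
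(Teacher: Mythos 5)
Your overall strategy is the paper's: the same lattice $\mathcal{L}$ of Jaffard overrings of $D$ contained in $T$, the same known surjectivities for each $S\in\mathcal{L}$, and the same reduction via Lemmas \ref{lemma:sublattice-surj-Pic} and \ref{lemma:sublattice-surj-PicInt} to showing that every $a\in T$ lies in some member of $\mathcal{L}$. The gap is in how you certify that $A:=\bigcap\{S\in\Theta\mid JS=S\}$ is a Jaffard overring. You invoke \cite[Proposition 4.8]{jaff-derived} for the partition $\Theta=Z\sqcup(\Theta\setminus Z)$, and you yourself note that this needs suitable compactness of \emph{both} pieces; but you only establish it for $Z$. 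The complementary piece $\Theta\setminus Z=\{S\in\Theta\mid JS=S\}$ is Zariski-\emph{open} in $\Theta$ (an a priori infinite union of basic opens $B(s_1,\ldots,s_n)$), and an open subset of the compact set $\Theta$ need not be compact; finite generation of $J$ does not give this. It would suffice if $J$ were \emph{invertible}, since then $\{S\mid JS=S\}=\{S\mid J^{-1}\subseteq S\}$ is a single quasi-compact basic open --- which is essentially why the analogous splitting works in Lemma \ref{lemma:Pruf1dim-surj-PicInt}, where $D$ is Pr\"ufer --- but the present proposition concerns an arbitrary domain, where no such description of $\{S\mid JS=S\}$ is available. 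So the hypothesis needed to apply \cite[Proposition 4.8]{jaff-derived} (the same compactness that would give flatness of $A$ and the independence $AB=K$) is left unproved, and this is a genuine hole, not a routine verification. (Incidentally, closedness of $Z$ in $\Theta$ holds for any ideal, finitely generated or not; finite generation is not the issue on that side either.)

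It is telling that your argument never uses the weak Jaffard hypothesis in any essential way (the remark that $Z$ consists of Jaffard overrings is never used): as written, it would prove surjectivity for an arbitrary member of an arbitrary pre-Jaffard family, which is exactly what the machinery of Section \ref{sect:preJaff} is built to avoid assuming. The paper enters the weak Jaffard hypothesis precisely at the point where your compactness is missing: by \cite[Proposition 5.3(a)]{jaff-derived}, since $\Theta$ is pointed at $T$ and $(D:_Da)T=T$, only finitely many $R_1,\ldots,R_n\in\Theta$ satisfy $(D:_Da)R_i\neq R_i$; splitting $\Theta$ along this \emph{finite} exceptional set, the family $\{A,R_1,\ldots,R_n\}$ with $A:=\bigcap\{R\in\Theta\mid R\neq R_1,\ldots,R_n\}$ is complete, independent and finite, hence a Jaffard family, so $A\in\mathcal{L}$ and $a\in A$ as in your final step. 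Replacing your topological splitting by this finiteness argument repairs the proof.
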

\begin{proof}
Let $\mathcal{L}$ be the lattice of Jaffard overrings of $D$ contained in $T$. Then, the extension maps $\Pic(D)\longrightarrow\Pic(S)$ and $\Pic(\Int(D))\longrightarrow\Pic(\Int(D)S)=\Pic(\Int(S))$ are surjective for every $S\in\mathcal{L}$. We claim that $\bigcup\{S\mid S\in\mathcal{L}\}=T$.

Indeed, let $a\in T$, and consider $(D:_Da)$. Then, $(D:_Da)T=T$, and thus by \cite[Proposition 5.3(a)]{jaff-derived} there are only finitely many $R\in\Theta$ such that $(D:_Da)R\neq R$, say $R_1,\ldots,R_n$. Define $A:=\bigcap\{R\in\Theta, R\neq R_1,\ldots,R_n\}$: then, $\{A,R_1,\ldots,R_n\}$ is a complete and independent finite family of flat overrings of $D$, and thus it is a Jaffard family; moreover, $(D:_Da)A=A$. In particular, $A$ is a Jaffard overring of $D$ contained in $T$ (hence, $A\in\mathcal{L}$) such that $a\in A$: it follows that $\bigcup\{S\mid S\in\mathcal{L}\}=T$.

The claims now follow from Lemmas \ref{lemma:sublattice-surj-Pic} and \ref{lemma:sublattice-surj-PicInt}.
\end{proof}

\begin{prop}\label{prop:weakJaff}
Let $\Theta$ be a weak Jaffard family of $D$ pointed at $T_\infty$. Let $\pi_\Theta:\Pic(\Int(D))\longrightarrow\prod\{\Pic(\Int(D)T)\mid T\in\Theta\}$ be the extension map and let $\Delta$ be its cokernel. Then, there is an exact sequence
\begin{equation*}
0\longrightarrow\bigoplus_{T\in\Theta\setminus\{T_\infty\}}\Pic(\Int(T))\longrightarrow\Delta\longrightarrow\Pic(\Int(D)T_\infty)\longrightarrow 0.
\end{equation*}
\end{prop}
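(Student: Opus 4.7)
The approach generalizes Theorem~\ref{teor:ext-pic-jaffard}, with $\Theta\setminus\{T_\infty\}$ playing the role of a Jaffard family and $T_\infty$ treated as a single ``collective'' piece handled via Proposition~\ref{prop:pic-weakjaff-surj}. Recall that each $T\in\Theta\setminus\{T_\infty\}$ is by definition a Jaffard overring of $D$, so Proposition~\ref{prop:localizz}\ref{prop:localizz:jaff} gives $\Int(D)T=\Int(T)$, justifying the appearance of $\Pic(\Int(T))$ in the sequence.

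The central (and main) technical step is to show that, for every $[I]\in\Pic(\Int(D))$, the element $\pi_\Theta([I])$ has trivial $T$-component for all but finitely many $T\in\Theta\setminus\{T_\infty\}$, so that the image of $\pi_\Theta$ lies in the subgroup $B^{\oplus}:=\bigoplus_{T\neq T_\infty}\Pic(\Int(T))\oplus\Pic(\Int(D)T_\infty)$ of $\prod_{T\in\Theta}\Pic(\Int(D)T)$. To establish this, I would pick a unitary representative of $[I]$ via \cite[Remark~VIII.1.5]{intD}, so that $I$ contains some nonzero $a\in D$, and then apply the local-finiteness consequence of the weak-Jaffard structure used in the proof of Proposition~\ref{prop:pic-weakjaff-surj} (which invokes \cite[Proposition~5.3(a)]{jaff-derived}) to conclude that only finitely many $T\in\Theta$ satisfy $aT\neq T$; on the remaining $T$'s, $I\Int(D)T\supseteq a\Int(D)T=\Int(D)T$, so the $T$-component is trivial. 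This is where I expect the main difficulty to lie, as it requires a careful exploitation of the weak-Jaffard hypothesis.

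With this in hand, let $B_\infty:=\Pic(\Int(D)T_\infty)$. The desired exact sequence follows by applying the snake lemma to the commutative diagram with exact rows
\begin{equation*}
\begin{tikzcd}[column sep=small]
0\arrow[r]&\ker\pi_{T_\infty}\arrow[r]\arrow[d,"\psi"]&\Pic(\Int(D))\arrow[r,"\pi_{T_\infty}"]\arrow[d,"\pi_\Theta"]&B_\infty\arrow[r]\arrow[d,equal]&0\\
0\arrow[r]&B^{\oplus}\arrow[r]&B^{\oplus}\oplus B_\infty\arrow[r]&B_\infty\arrow[r]&0
\end{tikzcd}
\end{equation*}
in which the top row is exact because $\pi_{T_\infty}=p_{T_\infty}\circ\pi_\Theta$ is surjective (Proposition~\ref{prop:pic-weakjaff-surj}), the bottom row is the split sequence of the direct-sum decomposition, the middle vertical $\pi_\Theta$ lands in $B^{\oplus}$ by the previous paragraph, and $\psi$ is its restriction to $\ker\pi_{T_\infty}$ (which has image in $B^{\oplus}$ since the $B_\infty$-component of $\pi_\Theta([I])$ is $\pi_{T_\infty}([I])=0$ for $[I]\in\ker\pi_{T_\infty}$). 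Identifying the cokernels produced by the snake lemma with $\Delta$ then yields the claimed exact sequence.
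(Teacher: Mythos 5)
There is a genuine gap, and it sits exactly where you predicted the main difficulty would be. You claim that for an arbitrary unitary invertible ideal $I$ of $\Int(D)$ containing a nonzero constant $a\in D$, only finitely many $T\in\Theta$ satisfy $aT\neq T$, citing the local-finiteness consequence of \cite[Proposition 5.3(a)]{jaff-derived} as it is used in Proposition \ref{prop:pic-weakjaff-surj}. But that proposition applies only to ideals that blow up at $T_\infty$: in Proposition \ref{prop:pic-weakjaff-surj} it is invoked for $(D:_Da)$ with $a\in T_\infty$, so that $(D:_Da)T_\infty=T_\infty$. An arbitrary nonzero $a\in D$ need not satisfy $aT_\infty=T_\infty$, and in fact a weak Jaffard family is never locally finite (if it were, it would be a Jaffard family and $\njaff^1(\Theta)$ would be empty rather than $\{T_\infty\}$), so there always exist nonzero constants that are nonunits in infinitely many members of $\Theta$. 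Hence your central claim, that the whole image of $\pi_\Theta$ lies in $B^{\oplus}\oplus B_\infty$, is unjustified. The correct reduction (the one the paper uses) works only for classes whose $T_\infty$-component is trivial: choose $f\in I$ with $I\Int(D)T_\infty=f\Int(D)T_\infty$, replace $I$ by $f^{-1}I$, and only then apply \cite[Proposition 5.3(a)]{jaff-derived}; nothing stronger is needed.

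Independently of this, the endgame does not deliver the statement. First, despite the wording of the proposition, $\Delta$ must be read as the \emph{image} of $\pi_\Theta$ (equivalently $\Pic(\Int(D))/\Pic(D,\Theta)$): this is how it is used both in the proof and in Theorem \ref{teor:weakjaff-picpol}, and under a literal cokernel reading the sequence could not be exact, since the direct sum lies inside the image. Second, your snake-lemma diagram only produces an isomorphism $\coker\psi\simeq\coker\pi_\Theta$ (cokernels computed in $B^{\oplus}$ and $B^{\oplus}\oplus B_\infty$); it does not produce the extension $0\to\bigoplus_{T\neq T_\infty}\Pic(\Int(T))\to\Delta\to\Pic(\Int(D)T_\infty)\to 0$. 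What is actually required, and is entirely missing from your proposal, is the identification of the kernel of the projection $\Delta\to\Pic(\Int(D)T_\infty)$ with the direct sum; in particular the inclusion $\bigoplus_{T\neq T_\infty}\Pic(\Int(T))\subseteq\Delta$, i.e., that every $[J]\in\Pic(\Int(T))$ lifts to a class of $\Pic(\Int(D))$ which is trivial at every other member of $\Theta$. The paper obtains this by applying Theorem \ref{teor:ext-pic-jaffard} to the two-element Jaffard family $\{T,T^\perp\}$ with $T^\perp:=\bigcap\{S\mid S\in\Theta\setminus\{T\}\}$; nothing in your diagram yields such a lifting. (Your use of Proposition \ref{prop:pic-weakjaff-surj} to get surjectivity onto $\Pic(\Int(D)T_\infty)$ is correct and coincides with the paper's final step.)
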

\begin{proof}
We first note, that, for each $T\in\Theta\setminus\{T_\infty\}$, we have $\Int(D)T=\Pic(\Int(T))$ by Proposition \ref{prop:localizz}\ref{prop:localizz:jaff}, and thus $\Pic(\Int(D)T)=\Pic(\Int(T))$ for these overrings.

The inclusion $\Delta\subseteq\prod\{\Pic(\Int(D)T)\mid T\in\Theta\}$ induces a projection map $\pi':\Delta\longrightarrow\Int(D)T_\infty$, whose kernel contains exactly the extensions of the classes $[I]\in\Pic(\Int(D))$ such that $I$ becomes principal in each $T\in\Theta\setminus\Lambda$. We claim that this kernel is exactly $\bigoplus\{\Pic(\Int(T))\mid T\in\Lambda\}$.

We first show that the direct sum belongs to the kernel, and to do so we need to show that it is actually inside $\Delta$. If $[J]\in\Pic(\Int(T))$ for some $T\in\Theta$, $T\neq T_\infty$, we can consider the Jaffard family $\{T,T^\perp\}$, where $T^\perp:=\bigcap\{S\mid S\in\Theta\setminus\{T\}\}$: then, by Theorem \ref{teor:ext-pic-jaffard}, there is a class $[I]\in\Int(\Pic(D))$ such that $[IT]=[J]$ and $[IT^\perp]=[T^\perp]$, so that $[IS]=[S]$ for all other $S\in\Theta$. Thus the direct sum is contained in the kernel.

Conversely, suppose that $[I]\in\ker\pi'$. Then, $[IT_\infty]=[T_\infty]$, and thus there is an $f\in I$ such that $IT_\infty=fT_\infty$. Hence, $I':=f^{-1}I$ is an integral ideal of $\Int(D)$ such that $I'T_\infty=T_\infty$; by \cite[Proposition 5.3(a)]{jaff-derived}, $I'T\neq T$ for only finitely many $T\in\Theta$. Since $[I']=[I]$, it follows that $\pi_\Theta([I])$  belongs to the direct sum. Therefore, $\ker\pi'=\bigoplus_{T\in\Lambda}\Pic(\Int(T))$, and the claim is proved.

To conclude, we need to show that the map $\Delta\longrightarrow\Int(D)T_\infty$ is surjective. However, this map factorizes the extension map $\Int(D)\longrightarrow\Int(D)T_\infty$, which is surjective by Proposition \ref{prop:pic-weakjaff-surj}, and thus it is surjective itself. The claim is proved.
\end{proof}

We now transform this result using int-polynomial Picard groups; the following lemma has the same role of Proposition \ref{prop:jaffard-PicDTheta}.
\begin{lemma}\label{lemma:weakjaff-pic}
Let $\Theta$ be a weak Jaffard family pointed at $T_\infty$. Then, there is an exact sequence
\begin{equation*}
0\longrightarrow\bigoplus_{T\in\Theta\setminus\{T_\infty\}}\Pic(T)\longrightarrow\frac{\Pic(D)}{\Pic(D,\Theta)}\longrightarrow\Pic(T_\infty)\longrightarrow 0.
\end{equation*}
\end{lemma}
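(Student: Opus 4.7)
The plan is to split the desired sequence through the intermediate subgroup $\Pic(D,T_\infty):=\ker(\Pic(D)\to\Pic(T_\infty))$. By Proposition \ref{prop:pic-weakjaff-surj} the extension map to $\Pic(T_\infty)$ is surjective, which gives
\[
0\longrightarrow\Pic(D,T_\infty)\longrightarrow\Pic(D)\longrightarrow\Pic(T_\infty)\longrightarrow 0;
\]
since $\Pic(D,\Theta)\subseteq\Pic(D,T_\infty)$, quotienting both by $\Pic(D,\Theta)$ produces a surjection $\Pic(D)/\Pic(D,\Theta)\to\Pic(T_\infty)$ whose kernel is $\Pic(D,T_\infty)/\Pic(D,\Theta)$. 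It therefore suffices to construct an isomorphism between this kernel and $\bigoplus_{T\in\Theta\setminus\{T_\infty\}}\Pic(T)$.

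The natural candidate is $\Psi\colon\Pic(D,T_\infty)\to\bigoplus_{T\neq T_\infty}\Pic(T)$, $[I]\mapsto([IT])_T$. To see that $\Psi$ actually lands in the direct sum: given $[I]$ in the domain, write $IT_\infty=gT_\infty$ and replace $I$ by the representative $g^{-1}I$ of the same Picard class, so that now $IT_\infty=T_\infty$. The local-finiteness property of a weak Jaffard family pointed at $T_\infty$ (cf.\ \cite[Proposition 5.3(a)]{jaff-derived}) then forces $IT=T$, and hence $[IT]$ trivial in $\Pic(T)$, for all but finitely many $T\in\Theta$. By construction, the kernel of $\Psi$ consists of the classes $[I]$ that become principal both in every $T\in\Theta\setminus\{T_\infty\}$ and in $T_\infty$, which is exactly $\Pic(D,\Theta)$.

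For surjectivity, I would mimic the argument in the proof of Proposition \ref{prop:weakJaff}: fix $T\in\Theta\setminus\{T_\infty\}$ and set $T^\perp:=\bigcap\{S\in\Theta\mid S\neq T\}$. Then $\{T,T^\perp\}$ is a finite, hence Jaffard, family of $D$, so by Proposition \ref{prop:jaffard-PicDTheta} every $[J]\in\Pic(T)$ can be written as $[IT]$ for some $[I]\in\Pic(D)$ with $IT^\perp$ principal; since $T^\perp\subseteq S$ for every $S\in\Theta\setminus\{T\}$, the principality of $IT^\perp$ propagates to each such $S$, and in particular yields $[I]\in\Pic(D,T_\infty)$. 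Taking a finite product in $\Pic(D,T_\infty)$ of such classes realizes any element of $\bigoplus_{T\neq T_\infty}\Pic(T)$. The main obstacle is the finite-support check in the second paragraph, which rests in an essential way on the weak-Jaffard structure at $T_\infty$; combining the resulting isomorphism $\Pic(D,T_\infty)/\Pic(D,\Theta)\simeq\bigoplus_{T\neq T_\infty}\Pic(T)$ with the initial exact sequence then yields the one in the statement.
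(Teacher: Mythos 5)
Your proposal is correct and takes essentially the same route as the paper's proof: both reduce to the surjection $\Pic(D)/\Pic(D,\Theta)\to\Pic(T_\infty)$ (surjectivity from Proposition \ref{prop:pic-weakjaff-surj}) and identify its kernel $\Pic(D,T_\infty)/\Pic(D,\Theta)$ with $\bigoplus_{T\neq T_\infty}\Pic(T)$ via the extension map, using \cite[Proposition 5.3(a)]{jaff-derived} for the finite-support check and the definition of $\Pic(D,\Theta)$ for the kernel. The only cosmetic difference is in the surjectivity step, where the paper contracts an invertible ideal $I$ of $T$ to $J:=I\cap D$, while you pass through the two-element Jaffard family $\{T,T^{\perp}\}$ and Proposition \ref{prop:jaffard-PicDTheta}; this rests on the same underlying fact about Jaffard families.
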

\begin{proof}
The extension map $\Pic(D)\longrightarrow\Pic(T_\infty)$ is surjective, with kernel $\Pic(D,T_\infty)$. In particular, the kernel contains $\Pic(D,\Theta)$, and thus the extension map induces a surjective map $\displaystyle{\frac{\Pic(D)}{\Pic(D,\Theta)}\longrightarrow\Pic(T_\infty)}$ with kernel $\displaystyle{\frac{\Pic(D,T_\infty)}{\Pic(D,\Theta)}}$. We claim that this group is isomorphic to $\displaystyle{\bigoplus_{T\in\Theta\setminus\{T_\infty\}}\Pic(T)}$.

Indeed, consider the extension map $\displaystyle{\phi:\Pic(D,T_\infty)\longrightarrow\bigoplus_{T\in\Theta\setminus\{T_\infty\}}\Pic(T)}$. Note that $\phi$ is well-defined since, if $IT_\infty=T_\infty$, then $IS\neq S$ only for finitely many $S\in\Theta$ \cite[Proposition 5.3(a)]{jaff-derived}. Moreover, $\phi$ is surjective: indeed, let $[I]\in\Pic(T)$, with $I\subseteq T$, and set $J:=I\cap D$. Then, $J$ is an invertbile ideal of $D$ such that $JS=S$ for all $S\in\Theta$, $S\neq T$, and in particular $JT_\infty=T_\infty$. Therefore, $\phi([J])$ is the element of the direct sum whose only nonzero coefficient is the one corresponding to $T$, which is equal to $[I]$. Thus, $\phi$ is surjective.

The kernel of $\phi$ is given by all $[I]\in\Pic(D,T_\infty)$ that become principal in $\Pic(T)$ for each $T\in\Theta$; that is, by definition, $\ker\phi=\Pic(D,\Theta)$. Thus $\displaystyle{\frac{\Pic(D,T_\infty)}{\Pic(D,\Theta)}\simeq\bigoplus_{T\in\Theta\setminus\{T_\infty\}}\Pic(T)}$. The exactness of the sequence of the statement follows.
\end{proof}

\begin{teor}\label{teor:weakjaff-picpol}
Let $\Theta$ be a weak Jaffard family of $D$ pointed at $T_\infty$. Then, there is an exact sequence
\begin{equation*}
0\longrightarrow\bigoplus_{T\in\Theta\setminus\{T_\infty\}}\picpol(T)\longrightarrow\picpol(D)\longrightarrow\picpol(D,T_\infty)\longrightarrow 0.
\end{equation*}
\end{teor}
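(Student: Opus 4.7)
The approach is a snake-lemma argument exactly parallel to the one used in the proof of Theorem \ref{teor:ext-jaff-picpol}, combining Lemma \ref{lemma:weakjaff-pic} with Proposition \ref{prop:weakJaff}. First I would reformulate Proposition \ref{prop:weakJaff}: by Proposition \ref{prop:prejaff-exact}, the extension $\Pic(D,\Theta)\hookrightarrow\Pic(\Int(D))$ is injective with image equal to $\ker\pi_\Theta$, so the group $\Delta$ (the image of $\pi_\Theta$) is canonically isomorphic to $\Pic(\Int(D))/\Pic(D,\Theta)$. Proposition \ref{prop:weakJaff} thus yields an exact sequence
\begin{equation*}
0\longrightarrow\bigoplus_{T\in\Theta\setminus\{T_\infty\}}\Pic(\Int(T))\longrightarrow\frac{\Pic(\Int(D))}{\Pic(D,\Theta)}\longrightarrow\Pic(\Int(D)T_\infty)\longrightarrow 0.
\end{equation*}

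Next I would assemble the commutative diagram
\begin{equation*}
\begin{tikzcd}
0\arrow[r] & \displaystyle\bigoplus_{T\neq T_\infty}\Pic(T)\arrow[r]\arrow[d] & \displaystyle\frac{\Pic(D)}{\Pic(D,\Theta)}\arrow[r]\arrow[d] & \Pic(T_\infty)\arrow[r]\arrow[d] & 0\\
0\arrow[r] & \displaystyle\bigoplus_{T\neq T_\infty}\Pic(\Int(T))\arrow[r] & \displaystyle\frac{\Pic(\Int(D))}{\Pic(D,\Theta)}\arrow[r] & \Pic(\Int(D)T_\infty)\arrow[r] & 0
\end{tikzcd}
\end{equation*}
whose rows are the sequence from Lemma \ref{lemma:weakjaff-pic} and the reformulated sequence above, and whose vertical arrows are (componentwise) the canonical extensions to integer-valued polynomial rings. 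The middle arrow descends to the quotient because $\iota_D$ identifies the subgroup $\Pic(D,\Theta)\subseteq\Pic(D)$ with the subgroup of $\Pic(\Int(D))$ used in the bottom row, and the squares commute by direct inspection (in each case both paths send the class $[I]$ to the class of its extension in the relevant integer-valued polynomial ring).

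Each vertical arrow is injective: the left and right arrows are split by the evaluation homomorphisms $\Int(T)\to T$ and $\Int(D)T_\infty\to T_\infty$ at $X\mapsto 0$, so each $\iota_T$ and $\iota_{D,T_\infty}$ is injective; and the middle arrow is injective by the same splitting applied to $\iota_D$, together with the fact that $\iota_D$ restricts to a bijection between the two copies of $\Pic(D,\Theta)$. By the snake lemma, the sequence of cokernels is exact. The left cokernel is $\bigoplus_{T\neq T_\infty}\picpol(T)$, since cokernels commute with direct sums and $\Int(D)T=\Int(T)$ for a Jaffard overring (Proposition \ref{prop:localizz}\ref{prop:localizz:jaff}); the right cokernel is $\picpol(D,T_\infty)$ by definition; and, by the third isomorphism theorem, the middle cokernel is $\Pic(\Int(D))/\iota_D(\Pic(D))=\picpol(D)$, because $\Pic(D,\Theta)\subseteq\Pic(D)$.

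The only genuinely delicate point is the reformulation of Proposition \ref{prop:weakJaff} (rewriting the image $\Delta$ as the quotient $\Pic(\Int(D))/\Pic(D,\Theta)$, which requires the injectivity statement of Proposition \ref{prop:prejaff-exact}) and the identification of the middle cokernel with $\picpol(D)$; once these are in place, everything else is a standard diagram chase.
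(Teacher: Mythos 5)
Your proof is correct and follows essentially the same route as the paper: the same commutative diagram built from Lemma \ref{lemma:weakjaff-pic} and Proposition \ref{prop:weakJaff}, with injective vertical extension maps and the snake lemma applied to the resulting sequence of cokernels. The only (cosmetic) difference is that you identify the image $\Delta$ of $\pi_\Theta$ with $\Pic(\Int(D))/\Pic(D,\Theta)$ via Proposition \ref{prop:prejaff-exact} at the outset, whereas the paper performs this identification at the end when computing the middle cokernel via the third isomorphism theorem.
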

\begin{proof}
Consider the commutative diagram
\begin{equation*}
\begin{tikzcd}
0\arrow[r] & \displaystyle{\bigoplus_{T\in\Theta\setminus\{T_\infty\}}}\Pic(T) \arrow[r]\arrow[d] & \displaystyle{\frac{\Pic(D)}{\Pic(D,\Theta)}} \arrow[r]\arrow[d] & \Pic(T_\infty)\arrow[d] \arrow[r] & 0\\
0\arrow[r] & \displaystyle{\bigoplus_{T\in\Theta\setminus\{T_\infty\}}}\Pic(\Int(T)) \arrow[r] & \Delta\arrow[r] & \Pic(\Int(D)T_\infty)\arrow[r] & 0\\
\end{tikzcd}
\end{equation*}
where $\Delta$ is the cokernel of $\pi_\Theta$. The first row is defined (and is exact) by Lemma \ref{lemma:weakjaff-pic}, while the second row is exact by Proposition \ref{prop:weakJaff}. All vertical maps are injective: the side ones since $\Pic(A)\longrightarrow\Pic(\Int(D)A)$ is always injective, while the middle one because the kernel of the natural map $\Pic(D)\longrightarrow\Delta$ is exactly $\Pic(D,\Theta)$. By the snake lemma, the sequence of cokernels
\begin{equation*}
0\longrightarrow\bigoplus_{T\in\Theta\setminus\{T_\infty\}}\picpol(T)\longrightarrow G\longrightarrow \picpol(D,T_\infty)\longrightarrow 0
\end{equation*}
is exact. Moreover, the quotient $G$ between $\Delta$ and $\displaystyle{\frac{\Pic(D)}{\Pic(D,\Theta)}}$ is isomorphic to
\begin{equation*}
\frac{\Pic(\Int(D))/\Pic(D,\Theta)}{\Pic(D)/\Pic(D,\Theta)}\simeq\frac{\Pic(\Int(D))}{\Pic(D)}=\picpol(D);
\end{equation*}
thus, we obtain exactly the exact sequence of the statement.
\end{proof}

\begin{ex}\label{ex:weakJaff}
Let $p$ be a prime number, and let $V:=\insZ_{(p)}$. Applying repeatedly \cite[Chapter 6, Theorem 4]{ribenboim}, we can construct a chain of extensions $\insQ=L_0\subset L_1\subset\cdots\subset L_n\subset\cdots$ such that, for every $n$, $V$ has $n+1$ extensions $W_n,Z_{1,n},\ldots,Z_{n,n}$ to $L_n$:
\begin{itemize}
\item $W_n$ extends to $W_{n+1}$ and $Z_{n+1,n+1}$ in $L_{n+1}$;
\item for $i=1,\ldots,n$, $Z_{i,n}$ has a unique extension to $L_{n+1}$, namely $Z_{i,n+1}$;
\item $V\subset W_n$ is an immediate extension;
\item for each $i$, the extension $V\subset Z_{i,n}$ is trivial on value groups, while the extension of residue fields has degree at least $n$.
\end{itemize}
Let $L:=\bigcup_nL_n$. Then, the integral closure $\overline{V}$ of $V$ in $L$ is a one-dimensional Pr\"ufer domain whose localization at the maximal ideals are the extensions of $V$ to $L$, namely $W_\infty:=\bigcup_n W_n$ and, for each $i\inN$, $Z_{i,\infty}:=\bigcup_nZ_{i,n}$.

Each $Z_{i,\infty}$ is an isolated point of $\Zar(\overline{V})$ (because there is a $z\in Z_{i,i}\setminus(W_i\cup Z_{1,i}\cup\cdots\cup Z_{i-1,i})$), while $W_\infty$ is not isolated, since every finite subset of $W_\infty$ is contained in some $W_k$ and thus also in $W_{k,\infty}$. In particular, $\overline{V}$ is equal to the intersection of all $W_{i,\infty}$, and $\Theta:=\{W_\infty,Z_{i,\infty}\mid i\inN\}$ is a weak Jaffard family of $\overline{V}$.

The residue field of each $Z_{i,\infty}$ is infinite, and thus $\Int(Z_{i,\infty})$ is trivial; therefore, also $\Int(\overline{V})$ is trivial, and thus $\Int(\overline{V})W_\infty=W_\infty[X]$. However, $W_\infty$ is a DVR with finite residue field, and thus $\Int(W_\infty)$ is not trivial \cite[Proposition I.3.16]{intD}; it follows that $\Int(\overline{V})W_\infty\neq\Int(W_\infty)$.
\end{ex}

\section{Pre-Jaffard families}\label{sect:preJaff}
Proposition \ref{prop:weakJaff} is, in some ways, the best result that is possible to obtain without adding more hypothesis. However, if $\Int(D)T_\infty=\Int(T_\infty)$ (something that need not to happen, see Example \ref{ex:weakJaff}), then one may in principle repeat the process by taking a weak Jaffard family $\Theta'$ of $T_\infty$ and apply the same result; hopefully, this can lead to a finer description of $\Delta$ and thus of $\Pic(\Int(D))$ and $\picpol(D)$. The purpose of this section is to systematize this idea by using the notions of pre-Jaffard family and of its derived sequence (see Section \ref{sect:prelim:jaff}); we use throughout the section the notation introduced therein.

\begin{lemma}\label{lemma:Talpha-limit-union}
Let $\Theta$ be a pre-Jaffard family of $D$, and let $\gamma$ be a limit ordinal. Then, $\displaystyle{\bigcup_{\gamma<\alpha}T_\gamma=T_\alpha}$.
\end{lemma}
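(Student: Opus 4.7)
The plan is to establish both inclusions of the identity, reading the indexing as a typo (so $\alpha$ is the limit ordinal). The easy inclusion $\bigcup_{\gamma<\alpha}T_\gamma \subseteq T_\alpha$ is immediate from the limit clause in the definition of the derived sequence: since $\njaff^\alpha(D) = \bigcap_{\gamma<\alpha}\njaff^\gamma(D) \subseteq \njaff^\gamma(D)$ for every $\gamma<\alpha$, intersecting the smaller family of overrings gives $T_\gamma \subseteq T_\alpha$.

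For the reverse inclusion, I would pick $x \in T_\alpha$ and aim to produce some $\gamma<\alpha$ with $x \in T_\gamma$. Let $B(x) := \{T \in \Over(D) : x \in T\}$ be the corresponding subbasic Zariski-open set, and let $C := \Theta \setminus B(x)$ be its Zariski-closed complement in $\Theta$. Since $T_\beta = \bigcap\{T : T \in \njaff^\beta(D)\}$, the condition $x \in T_\beta$ translates into $\njaff^\beta(D) \subseteq B(x)$, equivalently $C \cap \njaff^\beta(D) = \emptyset$; and the hypothesis $x \in T_\alpha$ gives $C \cap \njaff^\alpha(D) = \emptyset$. So the task reduces to a finite-intersection-property statement about the descending chain $\{C \cap \njaff^\gamma(D)\}_{\gamma<\alpha}$, whose intersection equals $C \cap \njaff^\alpha(D) = \emptyset$: we need some individual member to be empty.

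The main obstacle is justifying this FIP step, since each $\njaff^\gamma(D)$ is only guaranteed to be Zariski-compact and inverse-closed in $\Theta$, not in general Zariski-closed, and Zariski-compact subsets of the non-Hausdorff space $\Theta$ do not enjoy the FIP for descending chains. The clean workaround is to pass to the \emph{constructible topology} on $\Theta$: there, the Zariski-closed set $C$ and the inverse-closed sets $\njaff^\gamma(D)$ are all closed, so each $C \cap \njaff^\gamma(D)$ is constructible-closed, while $\Theta$ itself is compact in the constructible topology (a proconstructibility property of pre-Jaffard families, developed in \cite{jaff-derived}). A descending chain of nonempty closed subsets of a compact space has nonempty intersection, contradicting emptiness at $\gamma = \alpha$ and thereby producing the required $\gamma < \alpha$ with $x \in T_\gamma$.
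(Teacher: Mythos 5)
Your reduction is sound: the easy inclusion is correct, and the reverse inclusion is indeed equivalent to the finite-intersection statement you isolate, namely that if $C\cap\njaff^\gamma(D)\neq\emptyset$ for all $\gamma<\alpha$ then $C\cap\njaff^\alpha(D)\neq\emptyset$. The gap is the compactness you invoke to settle it. Compactness of $\Theta$ in the subspace constructible topology of $\Over(D)$ is the same as $\Theta$ being proconstructible (closed in the constructible topology), because that topology is compact Hausdorff; and pre-Jaffard families are \emph{not} proconstructible in general --- Zariski quasi-compactness does not imply patch-closedness. Already for $D=\insZ$ and the Jaffard (hence pre-Jaffard) family $\Theta=\{\insZ_{(p)}\mid p \text{ prime}\}$: since $\insQ$ contains every element of $K$, no subbasic patch-open set $\Over(D)\setminus B(y)$ contains $\insQ$, so every basic patch-neighbourhood of $\insQ$ has the form $B(x_1,\ldots,x_n)$ and therefore contains $\insZ_{(p)}$ for almost all $p$; thus $\insQ$ lies in the patch closure of $\Theta$ without lying in $\Theta$, and in fact the constructible (and inverse) topology induced on this $\Theta$ is discrete, so $\Theta$ is not compact in it. No proconstructibility statement of the kind you cite is available in \cite{jaff-derived}, which gives only Zariski-compactness of the sets $\njaff^\gamma(D)$ and their closedness in the inverse topology \emph{of $\Theta$}. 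If one tries to repair the argument by passing to the patch closure of $\Theta$, compactness there only produces a common point of the patch closures of the sets $C\cap\njaff^\gamma(D)$, and that point may lie outside $\Theta$ (hence outside $\njaff^\alpha(D)$), so no contradiction follows. The final step is therefore unsupported, and this is a genuine gap, since your whole proof of the nontrivial inclusion funnels through it.

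For contrast, the paper's proof is purely algebraic and avoids any compactness: it sets $R:=\bigcup_{\gamma<\alpha}T_\gamma$, observes that $R$ is a flat overring (union of a chain of flat overrings), and notes that if $R\subsetneq T_\alpha$ there must be a nonzero prime $P$ of $D$ with $PR\neq R$ and $PT_\alpha=T_\alpha$; the unique $T\in\Theta$ with $PT\neq T$ then lies outside $\njaff^\alpha(D)$, hence (as $\alpha$ is a limit) outside some $\njaff^\beta(D)$ with $\beta<\alpha$, which forces $PT_\beta=T_\beta$ and hence $PR=R$, a contradiction. If you want a topological proof along your lines, you would need to establish an actually available compactness, for instance that $\Theta\setminus B(x)$ is quasi-compact in the inverse topology of $\Theta$ (so that the inverse-open cover by the complements of the $\njaff^\gamma(D)$ admits a finite subcover); as it stands, that is not something you have proved.
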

\begin{proof}
Let $R$ be the union of $T_\gamma$, for $\gamma<\alpha$. Then, $R$ is the union of a chain of flat overrings, and thus it is itself flat; moreover, $R\subseteq T_\alpha$ since $T_\gamma\subseteq T_\alpha$ when $\gamma<\alpha$. If $R\neq T_\alpha$, then (since $T_\alpha$ is flat too) there should be a nonzero prime ideal $P$ of $D$ such that $PR\neq R$ and $PT_\alpha=T_\alpha$. Since $\Theta$ is a Jaffard family, there is a unique $T\in\Theta$ such that $PT\neq T$; by construction, $T\notin\njaff^\alpha(\Theta)$, and since $\alpha$ is a limit ordinal there is a $\beta<\alpha$ such that $T\notin\njaff^\beta(\Theta)$. In this case, we have $PT_\beta=T_\beta$, and thus $PR=R$ since $T_\beta\subseteq R$. This is a contradiction, and thus $R=T_\alpha$, as claimed.
\end{proof}

\begin{prop}\label{prop:picTalpha-surj}
Let $\Theta$ be a pre-Jaffard family of $D$, and let $\{T_\alpha\}$ be the derived series of $D$. Then:
\begin{enumerate}[(a)]
\item for each $\alpha$, the extension map $\Pic(D)\longrightarrow\Pic(T_\alpha)$ is surjective;
\item if $\Int(D)T_\gamma=\Int(T_\gamma)$ for every $\gamma<\alpha$, then the extension map $\Pic(\Int(D))\longrightarrow\Pic(\Int(D)T_\alpha)$ is surjective.
\end{enumerate}
\end{prop}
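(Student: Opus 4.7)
The plan is to prove both (a) and (b) simultaneously by transfinite induction on $\alpha$. The base case $\alpha=0$ is immediate, since $T_0=D$ and both extension maps are the identity. The proof then splits into a successor step and a limit step, each using a different piece of machinery already developed in the paper.

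For the successor step $\alpha=\beta+1$, the key structural observation is that the collection
\begin{equation*}
\Theta'_\beta := \bigl(\njaff^\beta(\Theta)\setminus\njaff^{\beta+1}(\Theta)\bigr)\cup\{T_{\beta+1}\}
\end{equation*}
should form a \emph{weak} Jaffard family of $T_\beta$ pointed at $T_{\beta+1}$: the elements of $\njaff^\beta(\Theta)\setminus\njaff^{\beta+1}(\Theta)$ are, by the very definition of $\njaff^{\beta+1}$, the Jaffard overrings of $T_\beta$ inside $\njaff^\beta(\Theta)$, while $T_{\beta+1}$ is obtained by collapsing the non-Jaffard part $\njaff^{\beta+1}(\Theta)$ to its intersection. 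Granting this, Proposition \ref{prop:pic-weakjaff-surj} applied to $T_\beta$ gives that both $\Pic(T_\beta)\longrightarrow\Pic(T_{\beta+1})$ and $\Pic(\Int(T_\beta))\longrightarrow\Pic(\Int(T_\beta)T_{\beta+1})$ are surjective. Composing the first with the inductive surjectivity $\Pic(D)\longrightarrow\Pic(T_\beta)$ yields (a). For (b), the hypothesis $\Int(D)T_\beta=\Int(T_\beta)$ (valid because $\beta<\alpha$) identifies $\Pic(\Int(D)T_\beta)$ with $\Pic(\Int(T_\beta))$, and moreover $\Int(T_\beta)T_{\beta+1}=\Int(D)T_\beta\cdot T_{\beta+1}=\Int(D)T_{\beta+1}$; composing with the inductive surjectivity closes the step.

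For the limit case, Lemma \ref{lemma:Talpha-limit-union} gives $T_\alpha=\bigcup_{\gamma<\alpha}T_\gamma$. The chain $\{T_\gamma\}_{\gamma<\alpha}$ is automatically a sublattice of $\Over(D)$ (any chain is), and by the inductive hypothesis each map $\Pic(D)\longrightarrow\Pic(T_\gamma)$, as well as each $\Pic(\Int(D))\longrightarrow\Pic(\Int(D)T_\gamma)$ under the hypothesis of (b), is surjective. A direct application of Lemmas \ref{lemma:sublattice-surj-Pic} and \ref{lemma:sublattice-surj-PicInt} to this chain then yields the required surjectivity at $T_\alpha$.

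The main obstacle I expect is the successor step, and more precisely the verification that $\Theta'_\beta$ is a weak Jaffard family of $T_\beta$ rather than just a set of flat overrings: one needs completeness over $T_\beta$, pairwise independence of its members, Zariski-compactness, and the fact that $T_{\beta+1}$ is precisely the non-Jaffard member of the collapsed family. These should follow from the general theory of derived sequences in \cite{jaff-derived} (in the same spirit in which \cite[Proposition 4.8]{jaff-derived} was invoked in the proof of Lemma \ref{lemma:Pruf1dim-surj-PicInt}); once they are granted, the whole argument is a routine assembly of Proposition \ref{prop:pic-weakjaff-surj}, the two sublattice lemmas, and Lemma \ref{lemma:Talpha-limit-union}.
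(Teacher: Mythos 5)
Your proposal is correct and follows essentially the same route as the paper: transfinite induction treating (a) and (b) together, with the successor step factoring through $T_\gamma$ and applying Proposition \ref{prop:pic-weakjaff-surj} to the weak Jaffard family $(\njaff^\gamma(\Theta)\setminus\njaff^{\gamma+1}(\Theta))\cup\{T_{\gamma+1}\}$ of $T_\gamma$ (together with the identification $\Int(T_\gamma)T_{\gamma+1}=\Int(D)T_{\gamma+1}$ coming from the hypothesis of (b)), and the limit step combining Lemma \ref{lemma:Talpha-limit-union} with Lemmas \ref{lemma:sublattice-surj-Pic} and \ref{lemma:sublattice-surj-PicInt}. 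The point you flag as the main obstacle is handled in the paper exactly as you suggest, by invoking the derived-sequence theory of \cite{jaff-derived} (each $\njaff^\gamma(\Theta)$ is a pre-Jaffard family of $T_\gamma$, so collapsing its non-Jaffard part yields the pointed weak Jaffard family).
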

\begin{proof}
We proceed by induction on $\alpha$, considering both cases at the same time. If $\alpha=0$ then $T_\alpha=D$ and the claim is trivial. If $\alpha$ is a limit ordinal, then by Lemma \ref{lemma:Talpha-limit-union} $\displaystyle{\bigcup_{\gamma<\alpha}T_\gamma=T_\alpha}$, and the claim follows by applying the inductive hypothesis and Lemmas \ref{lemma:sublattice-surj-Pic} and \ref{lemma:sublattice-surj-PicInt}  to $\{T_\gamma\mid\gamma<\alpha\}$.

Suppose that $\alpha=\gamma+1$ is a successor ordinal. Then, the extension map $\Pic(D)\longrightarrow\Pic(T_\alpha)$ factors as
\begin{equation*}
\Pic(D)\longrightarrow\Pic(T_\gamma)\longrightarrow\Pic(T_\alpha).
\end{equation*}
The first of these maps is surjective by hypothesis; on the other hand, $\njaff^\gamma(\Theta)$ is a pre-Jaffard family of $T_\gamma$, and thus $T_\alpha$ belongs to the weak Jaffard family $(\njaff^\gamma(\Theta)\setminus\njaff^\alpha(\Theta))\cup\{T_\alpha\}$, which implies that $\Pic(T_\gamma)\longrightarrow\Pic(T_\alpha)$ is surjective by Proposition \ref{prop:pic-weakjaff-surj}. In the same way, $\Pic(\Int(D))\longrightarrow\Pic(\Int(D)T_\alpha)$ factors as 
\begin{equation*}
\Pic(D)\longrightarrow\Pic(\Int(D)T_\gamma)\longrightarrow\Pic(\Int(D)T_\alpha).
\end{equation*}
The first map is surjective by hypothesis; the second one is surjective since $\Pic(\Int(D)T_\gamma)=\Pic(\Int(T_\gamma))$ and thus we can apply again Proposition \ref{prop:pic-weakjaff-surj}. The claim is proved.
\end{proof}

\begin{teor}\label{teor:preJaff}
Let $\Theta$ be a pre-Jaffard family of $D$, and let $\{T_\alpha\}$ be the derived series of $D$. Fix an ordinal $\alpha$ and suppose that the following conditions hold:
\begin{itemize}
\item $\Int(D)T=\Int(T)$ for each $T\in\Theta\setminus\njaff^\alpha(\Theta)$ and for each $T=T_\gamma$ with $\gamma<\alpha$;
\item $\picpol(\Int(T))$ is a free group for each $T\in\Theta\setminus\njaff^\alpha(\Theta)$.
\end{itemize}
Then, there is an exact sequence
\begin{equation*}
0\longrightarrow\bigoplus_{T\in\Theta\setminus\njaff^\alpha(\Theta)}\picpol(T)\longrightarrow\picpol(D)\longrightarrow\picpol(D,T_\alpha)\longrightarrow 0.
\end{equation*}
\end{teor}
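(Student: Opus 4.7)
The plan is transfinite induction on $\alpha$, using Theorem \ref{teor:weakjaff-picpol} as the engine driving the inductive step. The base case $\alpha=0$ is immediate: $\njaff^0(\Theta)=\Theta$ makes the direct sum empty and $T_0=D$ forces $\picpol(D,T_0)=\picpol(D)$, so the asserted sequence collapses to the identity map on $\picpol(D)$.

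For the successor case $\alpha=\beta+1$, the inductive hypothesis at $\beta$ (whose assumptions are inherited from those at $\alpha$, since $\njaff^\alpha(\Theta)\subseteq\njaff^\beta(\Theta)$) yields
\begin{equation*}
0\longrightarrow\bigoplus_{T\in\Theta\setminus\njaff^\beta(\Theta)}\picpol(T)\longrightarrow\picpol(D)\longrightarrow\picpol(D,T_\beta)\longrightarrow 0,
\end{equation*}
and $\Int(D)T_\beta=\Int(T_\beta)$ gives $\picpol(D,T_\beta)=\picpol(T_\beta)$. By the construction of the derived series, $(\njaff^\beta(\Theta)\setminus\njaff^\alpha(\Theta))\cup\{T_\alpha\}$ is a weak Jaffard family of $T_\beta$ pointed at $T_\alpha$, so Theorem \ref{teor:weakjaff-picpol} applied to $T_\beta$ yields
\begin{equation*}
0\longrightarrow\bigoplus_{T\in\njaff^\beta(\Theta)\setminus\njaff^\alpha(\Theta)}\picpol(T)\longrightarrow\picpol(T_\beta)\longrightarrow\picpol(T_\beta,T_\alpha)\longrightarrow 0.
\end{equation*}
Since $T_\beta\subseteq T_\alpha$ and $\Int(T_\beta)=\Int(D)T_\beta$, one checks $\Int(T_\beta)T_\alpha=\Int(D)T_\alpha$, so $\picpol(T_\beta,T_\alpha)=\picpol(D,T_\alpha)$. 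Letting $K$ denote the kernel of the composed surjection $\picpol(D)\to\picpol(T_\beta)\to\picpol(D,T_\alpha)$, the snake lemma applied to the natural map of the two sequences into the diagram produces an exact sequence
\begin{equation*}
0\longrightarrow\bigoplus_{T\in\Theta\setminus\njaff^\beta(\Theta)}\picpol(T)\longrightarrow K\longrightarrow\bigoplus_{T\in\njaff^\beta(\Theta)\setminus\njaff^\alpha(\Theta)}\picpol(T)\longrightarrow 0.
\end{equation*}
The freeness hypothesis makes the right-hand term a free group, hence projective, so this extension splits and $K\simeq\bigoplus_{T\in\Theta\setminus\njaff^\alpha(\Theta)}\picpol(T)$, producing the required sequence at $\alpha$.

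For a limit ordinal $\alpha$, I would take the directed colimit over $\gamma<\alpha$ of the inductively established sequences. The colimit of the left-hand terms equals $\bigoplus_{T\in\Theta\setminus\njaff^\alpha(\Theta)}\picpol(T)$ by the limit-ordinal definition $\njaff^\alpha(\Theta)=\bigcap_{\gamma<\alpha}\njaff^\gamma(\Theta)$. For the right-hand side, Lemma \ref{lemma:Talpha-limit-union} gives $T_\alpha=\bigcup_{\gamma<\alpha}T_\gamma$, from which $\Int(D)T_\alpha=\bigcup_{\gamma<\alpha}\Int(D)T_\gamma$; since $\Pic$ commutes with filtered colimits of commutative rings (invertible modules being finitely presented), one obtains $\varinjlim_{\gamma<\alpha}\picpol(D,T_\gamma)\simeq\picpol(D,T_\alpha)$. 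Exactness of filtered colimits of abelian groups then concludes the induction.

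The hard part is the splitting step in the successor case: without a projectivity assumption on $\bigoplus_{T\in\njaff^\beta(\Theta)\setminus\njaff^\alpha(\Theta)}\picpol(T)$ the kernel $K$ is only determined as an abstract extension of two direct sums, with no reason to be the direct sum itself; the freeness hypothesis in the statement is precisely what forces $\mathrm{Ext}^1$ to vanish and makes the internal decomposition go through. A secondary technical point is the commutation of $\Pic$ with the filtered union $\bigcup_{\gamma<\alpha}\Int(D)T_\gamma$ in the limit case, which relies on the flatness of $T_{\gamma'}$ over $T_\gamma$ within the derived sequence.
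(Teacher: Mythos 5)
Your base and successor steps are essentially the paper's own argument: the same induction, the same use of Theorem \ref{teor:weakjaff-picpol} applied to the weak Jaffard family $(\njaff^\beta(\Theta)\setminus\njaff^\alpha(\Theta))\cup\{T_\alpha\}$ of $T_\beta$, the same kernel--kernel sequence via the snake lemma, and the same splitting via freeness; the identification $\picpol(T_\beta,T_\alpha)=\picpol(D,T_\alpha)$ through $\Int(T_\beta)T_\alpha=\Int(D)T_\alpha$ is also exactly what the paper does.

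The limit step, however, has a genuine gap. Writing $L_\gamma$ for the kernel of $\picpol(D)\to\picpol(D,T_\gamma)$, your colimit of the short exact sequences only yields $0\to\bigcup_{\gamma<\alpha}L_\gamma\to\picpol(D)\to\picpol(D,T_\alpha)\to 0$ (the right-hand identification via ``$\Pic$ commutes with filtered colimits'' is fine and corresponds to the paper's hands-on argument with finitely generated invertible ideals, using $\Int(D)T_\alpha=\bigcup_{\gamma<\alpha}\Int(D)T_\gamma$). What it does \emph{not} yield is your claim that ``the colimit of the left-hand terms equals $\bigoplus_{T\in\Theta\setminus\njaff^\alpha(\Theta)}\picpol(T)$'': the isomorphisms $L_\gamma\simeq\bigoplus_{T\in\Theta\setminus\njaff^\gamma(\Theta)}\picpol(T)$ supplied by the inductive hypothesis are non-canonical (each successor step involves a chosen splitting), so they need not form a morphism of directed systems compatible with the inclusions $L_\gamma\subseteq L_{\gamma'}$, and a union of an increasing chain of subgroups each abstractly isomorphic to a free direct sum need not be that direct sum, nor even free (e.g.\ $\insQ=\bigcup_n \frac{1}{n!}\insZ$). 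This is precisely where the freeness hypothesis must be used at limit ordinals, and your limit step never invokes it. The paper closes this gap by noting that, by induction, $\{L_\gamma\}_{\gamma<\alpha}$ is a continuous chain of \emph{free} subgroups each of which is a direct summand of the next, and then applying the chain lemma \cite[Lemma 5.6]{almded-radfact} (or \cite[Chapter 3, Lemma 7.3]{fuchs-abeliangroups}) to conclude $L_\alpha\simeq\bigoplus_{T\in\Theta\setminus\njaff^\alpha(\Theta)}\picpol(T)$; equivalently, one must run a transfinite recursion choosing compatible splittings. Some argument of this kind is needed before ``exactness of filtered colimits'' can finish the proof.
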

\begin{proof}
By induction on $\alpha$. If $\alpha=1$ then $\Lambda_\alpha=(\Theta\setminus\njaff^\alpha(\Theta))\cup\{T_\alpha\}$ is a weak Jaffard family of $D$, and thus the statement is exactly Theorem \ref{teor:weakjaff-picpol}.

Suppose that $\alpha=\gamma+1$ is a successor ordinal. There is a commutative diagram
\begin{equation}\label{eq:diagpreJaff}
\begin{tikzcd}
0\arrow[r] & \displaystyle{\bigoplus_{T\in\Theta\setminus\njaff^\gamma(\Theta)}\picpol(T)}\arrow[r]\arrow[d,"f"] &\picpol(D)\arrow[r]\arrow[d,equal] &\picpol(T_\gamma)\arrow[r]\arrow[d,"g"] & 0\\
0\arrow[r] & L\arrow[r] &\picpol(D)\arrow[r] &\picpol(D,T_\alpha)\arrow[r] & 0,
\end{tikzcd}
\end{equation}
where $L$ is the kernel of $\picpol(D)\longrightarrow\picpol(D,T_\alpha)$; note that this map is surjective since $\Pic(\Int(D))\longrightarrow\Pic(\Int(D)T_\alpha)$ is surjective by Proposition \ref{prop:picTalpha-surj}.

The first row is exact by induction (using the hypothesis $\Int(D)T_\gamma=\Int(T_\gamma)$ and thus $\picpol(D,T_\gamma)=\picpol(T_\gamma)$), while the second one is exact by definition of $L$. Since the map in the middle column is an equality, its kernel and cokernel are trivial, and thus by the snake lemma $\coker f\simeq\ker g$; by Theorem \ref{teor:weakjaff-picpol}, the latter is isomorphic to $\bigoplus\{\picpol(T)\mid T\in\njaff^\gamma(\Theta)\setminus\njaff^\alpha(\Theta)\}$, and thus there is an exact sequence
\begin{equation*}
0\longrightarrow \bigoplus_{T\in\Theta\setminus\njaff^\gamma(\Theta)}\picpol(T)\longrightarrow L\longrightarrow \bigoplus_{T\in\njaff^\gamma(\Theta)\setminus\njaff^\alpha(\Theta)}\picpol(T)\longrightarrow 0.
\end{equation*}
By hypothesis, each $\picpol(T)$ is free; hence the sequence splits and thus $L$ is isomorphic to the direct sum of $\picpol(T)$ for $T\in\Theta\setminus\njaff^\alpha(\Theta)$. The claim now follows reading the second row of \eqref{eq:diagpreJaff}.

\medskip

Suppose now that $\alpha$ is a limit ordinal; for each $\gamma\leq\alpha$, let $L_\gamma$ be the kernel of the surjective map $\picpol(D)\longrightarrow\picpol(D,T_\gamma)$. By induction, $\{L_\gamma\}_{\gamma<\alpha}$ is a chain of free subgroups of $L_\alpha$ such that each element is a direct summand of the next ones; we claim that $\bigcup_{\gamma<\alpha}L_\gamma=L_\alpha$.

Let $k\in L_\alpha$: then, $k$ is the image in $\picpol(D,T_\alpha)$ of an invertible ideal $I:=(f_1,\ldots,f_n)$ of $\Int(D)$ such that $I\Int(D)T_\alpha$ is principal, say generated by $g$. In particular, there are $t_1,\ldots,t_n\in\Int(D)T_\alpha$ such that $g=f_1t_1+\cdots+f_nt_n$, and $f_ig^{-1}\in\Int(D)T_\alpha$ for every $i$. By Lemma \ref{lemma:Talpha-limit-union}, $T_\alpha$ is the union of $T_\gamma$, for $\gamma<\alpha$, and thus the same holds for $\Int(D)T_\alpha$ and $\Int(D)T_\gamma$; therefore, there is a $\overline{\gamma}<\alpha$ such that $\Int(D)T_{\overline{\gamma}}$ contains all $t_i$ and all $f_ig^{-1}$. Then, $I\Int(D)T_{\overline{\gamma}}$ is a principal ideal, generated by $g$; in particular, the image of $k$ in $\picpol(D,T_{\overline{\gamma}})$ is trivial, i.e., $g\in L_{\overline{\gamma}}$.

Therefore, we can apply \cite[Lemma 5.6]{almded-radfact} (or \cite[Chapter 3, Lemma 7.3]{fuchs-abeliangroups}), obtaining that $L_\alpha\simeq\bigoplus\{\picpol(T)\mid T\in\Theta\setminus\njaff^\alpha(\Theta)\}$. The claim is proved.
\end{proof}

\begin{cor}\label{cor:Thetasharp}
Let $\Theta$ be a pre-Jaffard family of $D$ such that:
\begin{itemize}
\item $\Int(D)T=\Int(T)$ for every $T\in\Theta$;
\item $\picpol(\Int(T))$ is free for every $T\in\Theta$;
\item $\Theta$ is sharp.
\end{itemize}
Then, $\displaystyle{\picpol(D)\simeq\bigoplus_{T\in\Theta}\picpol(T)}$.
\end{cor}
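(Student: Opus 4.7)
The plan is to reduce the Corollary to Theorem \ref{teor:preJaff} applied at an ordinal $\alpha$ witnessing sharpness, and then show that the target term $\picpol(D,T_\alpha)$ of the resulting exact sequence vanishes. Since $\Theta$ is sharp, there exists an ordinal $\alpha$ with $\njaff^\alpha(\Theta)=\emptyset$, so $T_\alpha=\bigcap\emptyset=K$ and $\Theta\setminus\njaff^\alpha(\Theta)=\Theta$. Thus the assumptions that $\Int(D)T=\Int(T)$ and that $\picpol(\Int(T))$ be free for every $T\in\Theta\setminus\njaff^\alpha(\Theta)$ are exactly the ones given in the statement.

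The one hypothesis of Theorem \ref{teor:preJaff} that still requires attention is $\Int(D)T_\gamma=\Int(T_\gamma)$ for every $\gamma<\alpha$. I would handle this uniformly (no transfinite induction needed) by invoking Proposition \ref{prop:localizz-intersec}: for each $\gamma$, the set $\njaff^\gamma(\Theta)$ is a pre-Jaffard family of $T_\gamma$, and in particular a complete family of flat overrings of $T_\gamma$; every $S\in\njaff^\gamma(\Theta)$ lies in $\Theta$, so by hypothesis $\Int(D)S=\Int(S)$. Proposition \ref{prop:localizz-intersec} then yields $\Int(D)T_\gamma=\Int(T_\gamma)$, as required.

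With all the hypotheses verified, Theorem \ref{teor:preJaff} produces an exact sequence
\begin{equation*}
0\longrightarrow\bigoplus_{T\in\Theta}\picpol(T)\longrightarrow\picpol(D)\longrightarrow\picpol(D,T_\alpha)\longrightarrow 0,
\end{equation*}
so it suffices to show $\picpol(D,T_\alpha)=0$. Since $T_\alpha=K$ and $X\in\Int(D)$, we have $\Int(D)T_\alpha=\Int(D)K=K[X]$; as $K[X]$ is a principal ideal domain, $\Pic(\Int(D)T_\alpha)=0$, and hence $\picpol(D,T_\alpha)=0$ by definition. The sequence therefore collapses to the desired isomorphism $\picpol(D)\simeq\bigoplus_{T\in\Theta}\picpol(T)$. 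The only real obstacle is the propagation of the localization identity through the derived sequence, and this is precisely what Proposition \ref{prop:localizz-intersec} was set up to do.
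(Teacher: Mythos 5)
Your proof is correct and follows essentially the same route as the paper: both deduce $\Int(D)T_\gamma=\Int(T_\gamma)$ for the derived sequence via Proposition \ref{prop:localizz-intersec} (using that each $\njaff^\gamma(\Theta)\subseteq\Theta$ is a complete family of flat overrings of $T_\gamma$), then apply Theorem \ref{teor:preJaff} at an ordinal $\alpha$ with $T_\alpha=K$ and note $\picpol(D,T_\alpha)=(0)$ since $\Int(D)K=K[X]$. No gaps; your write-up just makes the intermediate verifications slightly more explicit.
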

\begin{proof}
The first condition implies, thanks to Proposition \ref{prop:localizz-intersec}, that $\Int(D)T_\alpha=T_\alpha$ for every $\alpha$; moreover, together with the second condition, it also implies that we can apply Theorem \ref{teor:preJaff}. Since $\Theta$ is sharp, there is an $\alpha$ such that $\njaff^\alpha(\Theta)=\emptyset$, i.e., $T_\alpha=K$; for this $\alpha$, $\Int(D)T_\alpha=\Int(D)K=K[X]$, and thus $\picpol(D,T_\alpha)=(0)$. The claim follows from Theorem \ref{teor:preJaff}.
\end{proof}

The condition that $\picpol(\Int(T))$ is free is satisfied, for example, when $T=D_M$ is a discrete valuation ring. A ring such that all the localizations at the maximal ideals are DVRs is called an \emph{almost Dedekind domain}; the following two results apply Theorem \ref{teor:preJaff} to this class of rings. We note that it is possible to characterize for which almost Dedekind domains the ring of integer-valued polynomials behave well under localization \cite[Theorem 4.3]{chabert-localizationInt}.

\begin{teor}\label{teor:almded}
Let $D$ be an almost Dedekind domain, $\{T_\alpha\}$ be the derived series of the canonical pre-Jaffard family $\Theta:=\{D_M\mid M\in\Max(D)\}$ of $D$, corresponding to $\njaff^\alpha(\Theta)\subseteq\Max(D)$. If, for every $M\notin\njaff^\alpha(\Theta)$, we have $\Int(D)D_M=\Int(D_M)$, and $\Int(D)T_\alpha=\Int(T_\alpha)$, then there is an exact sequence
\begin{equation*}
0\longrightarrow\bigoplus_{M\notin\njaff^\alpha(\Theta)}\Pic(\Int(D_M))\longrightarrow\picpol(D)\longrightarrow\picpol(T_\alpha)\longrightarrow 0.
\end{equation*}
\end{teor}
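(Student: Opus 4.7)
The plan is to reduce the statement to Theorem \ref{teor:preJaff} applied to the canonical pre-Jaffard family $\Theta$ at the ordinal $\alpha$, and then translate the outer terms of the resulting exact sequence.

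To apply Theorem \ref{teor:preJaff} I need to check its two hypotheses. The freeness condition on $\picpol(T)$ for each $T \in \Theta \setminus \njaff^\alpha(\Theta)$ is quick: each such $T$ is a localization $D_M$, hence a DVR, so $\Pic(D_M) = 0$ and $\picpol(D_M) = \Pic(\Int(D_M))$; this group is free trivially when the residue field of $D_M$ is infinite (so that $\Int(D_M) = D_M[X]$) and by \cite[Proposition 7.7]{chabert-pic-intV} (an application of N\"obeling's theorem on continuous $\insZ$-valued functions modulo constants) when it is finite.

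The main technical step is the localization hypothesis, which demands $\Int(D)T = \Int(T)$ both for $T \in \Theta \setminus \njaff^\alpha(\Theta)$ (given directly) and for $T = T_\gamma$ with $\gamma < \alpha$. The latter is not among the assumptions of the corollary, so I would bootstrap it from the localization identity at the individual $D_M$ and at $T_\alpha$. Fix $\gamma < \alpha$ and consider
\begin{equation*}
\Lambda_\gamma := \{D_M \mid M \in \njaff^\gamma(\Theta) \setminus \njaff^\alpha(\Theta)\} \cup \{T_\alpha\},
\end{equation*}
obtained from the pre-Jaffard family $\njaff^\gamma(\Theta)$ of $T_\gamma$ by collapsing the tail $\njaff^\alpha(\Theta)$ to its intersection $T_\alpha$. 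Since $\njaff^\alpha(\Theta)$ is inverse-closed in $\njaff^\gamma(\Theta)$, $\Lambda_\gamma$ is itself a pre-Jaffard family of $T_\gamma$ (by \cite[Proposition 4.8]{jaff-derived} applied over $T_\gamma$), and in particular consists of flat overrings of $T_\gamma$. Its completeness over $T_\gamma$ follows by splitting the intersection provided by completeness of $\njaff^\gamma(\Theta)$ and using that $\bigcap_{M \in \njaff^\alpha(\Theta)} I D_M = IT_\alpha$ by completeness of $\njaff^\alpha(\Theta)$ over $T_\alpha$ applied to the $T_\alpha$-ideal $IT_\alpha$. By our hypotheses, $\Int(D)S = \Int(S)$ for every $S \in \Lambda_\gamma$, and Proposition \ref{prop:localizz-intersec} then delivers $\Int(D)T_\gamma = \Int(T_\gamma)$.

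Once both hypotheses of Theorem \ref{teor:preJaff} are in place, it yields the exact sequence
\begin{equation*}
0 \longrightarrow \bigoplus_{T \in \Theta \setminus \njaff^\alpha(\Theta)} \picpol(T) \longrightarrow \picpol(D) \longrightarrow \picpol(D, T_\alpha) \longrightarrow 0.
\end{equation*}
The sequence in the statement follows by substituting $\picpol(D_M) = \Pic(\Int(D_M))$ on the left and $\picpol(D, T_\alpha) = \picpol(T_\alpha)$ on the right, the latter because the hypothesis $\Int(D)T_\alpha = \Int(T_\alpha)$ identifies $\Pic(\Int(D)T_\alpha)$ with $\Pic(\Int(T_\alpha))$. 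The only genuine obstacle, beyond citing Theorem \ref{teor:preJaff}, is the propagation of the localization identity from $T_\alpha$ down to every intermediate $T_\gamma$; the rest is a direct substitution.
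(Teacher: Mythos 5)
Your proposal is correct and takes essentially the same approach as the paper: the paper's proof likewise deduces $\Int(D)T_\gamma=\Int(T_\gamma)$ for every $\gamma<\alpha$ via Proposition \ref{prop:localizz-intersec} (applied to a complete family of flat overrings of $T_\gamma$ consisting of the $D_M$ with $M\notin\njaff^\alpha(\Theta)$ that contain $T_\gamma$ together with $T_\alpha$) and then invokes Theorem \ref{teor:preJaff}, with the freeness of $\picpol(D_M)$ covered by the remark preceding the theorem. Your write-up simply makes explicit the family $\Lambda_\gamma$, its completeness, and the final identifications $\picpol(D_M)=\Pic(\Int(D_M))$ and $\picpol(D,T_\alpha)=\picpol(T_\alpha)$, all of which the paper leaves implicit.
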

\begin{proof}
The condition on localization implies, by Proposition \ref{prop:localizz-intersec}, that $\Int(D)T_\gamma=T_\gamma$ for every $\gamma<\alpha$. The claim now follows from Theorem \ref{teor:preJaff}.
\end{proof}

\begin{cor}\label{cor:almded-best}
Let $D$ be an almost Dedekind domain. If $\Int(D)D_M=\Int(D_M)$ for all $M\in\Max(D)$ and $\Max(D)$ is scattered (with respect to the inverse topology) then 
\begin{equation*}
\picpol(D)\simeq\bigoplus_{M\in\Max(D)}\picpol(D_M)
\end{equation*}
and
\begin{equation*}
\Pic(\Int(D))\simeq\Pic(D)\oplus\bigoplus_{M\in\Max(D)}\Pic(\Int(D_M))
\end{equation*}
\end{cor}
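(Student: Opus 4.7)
The plan is to apply Corollary \ref{cor:Thetasharp} to the canonical pre-Jaffard family $\Theta:=\{D_M\mid M\in\Max(D)\}$ of the almost Dedekind domain $D$, and then upgrade the resulting isomorphism for $\picpol(D)$ to the split form for $\Pic(\Int(D))$. Since $D$ is almost Dedekind and one-dimensional, $\Theta$ is indeed a pre-Jaffard family of $D$ (the family of localizations at the maximal ideals is complete, independent, with each $D_M$ flat), so the corollary is applicable once its three hypotheses are verified.

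First, the hypothesis $\Int(D)D_M=\Int(D_M)$ is assumed, so the localization condition of Corollary \ref{cor:Thetasharp} holds on $\Theta$. Second, each $D_M$ is a DVR and therefore local, so $\Pic(D_M)=0$ and $\picpol(D_M)=\Pic(\Int(D_M))$; this last group is free, being either trivial (when the residue field is infinite, so that $\Int(D_M)=D_M[X]$ by \cite[Proposition I.3.16]{intD}) or free by the description of $\Pic(\Int(V))$ for a DVR $V$ with finite residue field as a quotient of a N\"obeling-type group of $\mathbb{Z}$-valued continuous functions (see \cite[Chapter VIII]{intD} and \cite[Proposition 7.7]{chabert-pic-intV}). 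Third, we must show that $\Theta$ is sharp, i.e.\ that the derived sequence of $\Theta$ reaches the empty set. This is the key topological step: for the canonical pre-Jaffard family of an almost Dedekind domain, the operation $\njaff^\alpha(\Theta)\rightsquigarrow\njaff^{\alpha+1}(\Theta)$ corresponds to removing the isolated points of $\njaff^\alpha(\Theta)$ viewed inside $\Max(D)$ with the inverse topology, and a space is scattered exactly when this iterative removal of isolated points eventually exhausts it; I will invoke the corresponding statement from \cite{jaff-derived} to conclude that sharpness of $\Theta$ is equivalent to $\Max(D)$ being scattered in the inverse topology.

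Having checked the three hypotheses, Corollary \ref{cor:Thetasharp} yields immediately
\begin{equation*}
\picpol(D)\simeq\bigoplus_{M\in\Max(D)}\picpol(D_M)\simeq\bigoplus_{M\in\Max(D)}\Pic(\Int(D_M)),
\end{equation*}
which is the first displayed isomorphism. For the second, I use the defining exact sequence
\begin{equation*}
0\longrightarrow\Pic(D)\longrightarrow\Pic(\Int(D))\longrightarrow\picpol(D)\longrightarrow 0.
\end{equation*}
By the previous paragraph, $\picpol(D)$ is a direct sum of the free groups $\Pic(\Int(D_M))$, hence itself free; therefore the sequence splits, giving $\Pic(\Int(D))\simeq\Pic(D)\oplus\picpol(D)$, which is the desired second isomorphism.

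The main obstacle is the third bullet, namely translating the hypothesis that $\Max(D)$ is scattered in the inverse topology into sharpness of $\Theta$. Everything else is an immediate application of already-established results: the localization hypothesis and freeness of $\picpol(D_M)$ feed straight into Corollary \ref{cor:Thetasharp}, and the splitting argument for the second isomorphism is formal once the first is established. The scattered-implies-sharp step is the only place where genuine topological content enters, and my proof will rest on citing the corresponding characterization from \cite{jaff-derived}.
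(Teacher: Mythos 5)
Your proposal is correct and takes essentially the same route as the paper: the paper likewise applies Corollary \ref{cor:Thetasharp} to the canonical pre-Jaffard family $\{D_M\mid M\in\Max(D)\}$, citing the result of \cite{jaff-derived} (Corollary 8.6 there) that scattered implies sharp, with the freeness of each $\Pic(\Int(D_M))$ (trivial or free by \cite{chabert-pic-intV}) supplying both the second hypothesis and the splitting of $0\to\Pic(D)\to\Pic(\Int(D))\to\picpol(D)\to 0$.
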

\begin{proof}
If $\Max(D)$ is scattered, then the canonical pre-Jaffard family $\Theta:=\{D_M\mid M\in\Max(D)\}$ is sharp \cite[Corollary 8.6]{jaff-derived}. The claim now follows from Corollary \ref{cor:Thetasharp} (or by Theorem \ref{teor:almded} applied with $\alpha$ being the Cantor-Bendixson rank of $\Max(D)$, endowed with the inverse topology).
\end{proof}

\end{document}